\theoremstyle{plain}
\newtheorem{thm}{\protect\theoremname}[section]
\theoremstyle{definition}
\theoremstyle{plain}
\newtheorem{lem}[thm]{\protect\lemmaname}
\theoremstyle{remark}
\newtheorem{rem}[thm]{\protect\remarkname}
\theoremstyle{plain}
\newtheorem{prop}[thm]{\protect\propositionname}
\theoremstyle{plain}
\newtheorem{cor}[thm]{\protect\corollaryname}
\theoremstyle{definition}
\newtheorem{example}[thm]{\protect\examplename}
\theoremstyle{definition}
\newcommand{\ind}{\operatorname{ind}}
\newcommand*{\e}{\mathrm{e}}
\renewcommand*{\i}{\mathrm{i}}
\newcommand{\R}{\mathbb{R}}
\newcommand{\T}{\mathbb{T}}
\newcommand{\N}{\mathbb{N}}
\newcommand{\CC}{\mathbb{C}}
\newcommand{\Z}{\mathbb{Z}}
\newcommand{\dom}{\operatorname{dom}}
\renewcommand{\d}{\,\mathrm{d}}
\renewcommand{\Re}{\operatorname{Re}}
\renewcommand{\tilde}{\widetilde}
\newcommand{\one}{\mathrm{I}}
\newcommand{\two}{\mathrm{II}}
\newcommand{\three}{\mathrm{III}}
\theoremstyle{definition}
\providecommand{\corollaryname}{Corollary}
\providecommand{\definitionname}{Definition}
\providecommand{\examplename}{Example}
\providecommand{\lemmaname}{Lemma}
\providecommand{\problemname}{Problem}
\providecommand{\propositionname}{Proposition}
\providecommand{\remarkname}{Remark}
\providecommand{\theoremname}{Theorem}
\begin{document}
\title{Asymptotic Stability of port-Hamiltonian Systems}
\author{Marcus Waurick\thanks{Institut f\"ur Angewandte Analysis, TU Bergakademie Freiberg, Germany}\hspace{0.1cm} and
Hans Zwart\thanks{Department of Applied Mathematics, University of Twente, Twente, The Netherlands and  Department of Mechanical Engineering, Technische Universiteit Eindhoven, 5600 MB Eindhoven, The Netherlands}}
\maketitle
\begin{abstract}
We characterise asymptotic stability of port-Hamiltonian systems by means of matrix conditions using well-known resolvent criteria from $C_0$-semigroup theory. The idea of proof is based on a recent characterisation of exponential stability established in \cite{TW22_expstab}, which was inspired by a structural observation concerning port-Hamiltonian systems from \cite{PTWW_22}. We apply the result to study the asymptotic stability of a network of vibrating strings.
\end{abstract}
\textbf{Keywords} port-Hamiltonian systems, stability,
$C_{0}$-semigroup, Infinite-dimensional systems theory\\
\label{=00005Cnoindent}\textbf{MSC2020 }93D23, 37K40, 47D06, 34G10

\tableofcontents{}

\section{Introduction}

In this note we discuss the stability of port-Hamiltonian partial differential equations (p.d.e.'s) of the form
\begin{equation}
\label{eq:1HZ}
   \frac{\partial x}{\partial t}(\zeta,t) =  \sum_{k=0}^N P_k \frac{\partial^k }{\partial \zeta^k} \left( \mathcal{H}(\zeta) x(\zeta,t)\right)
\end{equation}
on the spatial interval $[a,b]$. Here, $P_k \in \CC^{n\times n}$ satisfy $P_k^*= (-1)^{k+1} P_k$ ($k\in\{0,\cdots, N\}$) and $P_N$ is invertible. The \emph{Hamiltonian density} $\mathcal{H}\colon (a,b)\to \CC^{n\times n}$ is uniformly bounded, ${\mathcal H}(\zeta)^* = {\mathcal H}(\zeta)$ and there exists an $m>0$ such that ${\mathcal H}(\zeta) \geq mI$ for almost all $\zeta \in [a,b]$. The above p.d.e.\ is completed with boundary conditions given as
\begin{equation}
\label{eq:2HZ}
  W_B \left[ \begin{array}{c}  ({\mathcal H}x)(b,t)\\ \vdots\\ \frac{\partial^{N-1}(\mathcal{H} x)}{\partial \zeta^{N-1}}(b,t)\\({\mathcal H}x)(a,t)\\ \vdots\\  \frac{\partial^{N-1} (\mathcal{H} x)}{\partial \zeta^{N-1}}(a,t)\end{array} \right] = 0,
\end{equation}
where $W_B$ is an $nN \times 2nN$ matrix which we assume to be of full rank.

One possible approach to address properties of the above system is to invoke the theory of strongly continuous semigroups of bounded linear operators, in that one reformulates the above as an abstract ordinary differential equation in an (infinite-dimensional) state space $X$. For this, a suitable functional analytic setting is conveniently formulated in the weighted Hilbert space $X=L_{2,\mathcal{H}}((a,b);{\mathbb C}^n)$, which coincides with $L_2((a,b);{\mathbb C}^n)$ as set and is endowed with the weighted inner product given by
\begin{equation}
\label{eq:3HZ}
   \langle f,g\rangle_{\mathcal{H}}
   =\frac{1}{2} \int_a^b g(\zeta)^* {\mathcal H}(\zeta) f(\zeta) d\zeta.
\end{equation}
$X$ is known as the {\em energy space}. In \cite{GoZM05} necessary and sufficient conditions are given when the operator associated with  the above p.d.e.\ plus boundary conditions generates a contraction semigroup. Since the notation will be used in our stability result, we repeat this theorem. We define the $nN\times nN$ matrix $Q$ and the $2nN \times 2nN$ matrix $R_{ext}$ as
\begin{equation}
\label{eq:Q}
  Q= [Q_{ij}], i,j \in \{1,\cdots, N\} \mbox{ with } Q_{ij} = \begin{cases} 0 & i+j >N \\ (-1)^{i-1} P_k & i+j-1 =k
  \end{cases}
\end{equation}
and
\[
  R_{ext} = \frac{1}{\sqrt{2}} \begin{bmatrix} Q & - Q \\ I_{nN} & I_{nN} \end{bmatrix}.
\]Note that $R_{ext}$ is invertible since $P_{N}$ is invertible.
\begin{thm}[\cite{GoZM05,JMZ_2015,JZ}]\label{thm:gencon} 
Consider the operator 
\begin{equation} 
\label{eq:4HZ}
  A  = \left( P_N\frac{d^N}{d\zeta^N} + \cdots + P_1\frac{d}{d \zeta} + P_0\right) \mathcal{H}
\end{equation}
with domain
 \begin{equation}
 \label{eq:5HZ}
    \dom(A)= \{ x\in L_{2,\mathcal{H}}((a,b); {\mathbb C}^n) \mid  \mathcal{H}x \in H^N((a,b);{\mathbb C}^n), W_B \begin{bmatrix} (\mathcal{H}x)(b) \\ \vdots \\ \frac{d^{N-1}\mathcal{H}x}{d\zeta^{N-1}}(b) \\ (\mathcal{H}x)(a)\\ \vdots \\ \frac{d^{N-1}\mathcal{H}x}{d\zeta^{N-1}}(a)  \end{bmatrix} = 0\},
 \end{equation}
where $W_B$ is a full rank $nN\times 2nN$ matrix.
Then the following conditions are equivalent:
\begin{enumerate}[(i)]
 \item 
   $A$ generates a contraction semigroup $X$;
 \item 
   $W_B$ satisfies
\begin{equation}
    \label{eq:WB}
   W_{B} R_{ext}^{-1} \begin{bmatrix}
0 & I_{nN}\\
I_{nN} & 0
\end{bmatrix}\left(W_{B} R_{ext}^{-1}\right)^{\ast}\geq 0;
\end{equation}
\item
  For all $x \in \dom(A)$ there holds
  \[
    \langle Ax, x\rangle_{\mathcal H} + \langle x, Ax\rangle_{\mathcal H} \leq 0.
  \]
\end{enumerate} 
\end{thm}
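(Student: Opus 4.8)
The plan is to prove the two equivalences (i)$\Leftrightarrow$(iii) and (ii)$\Leftrightarrow$(iii) separately: the first via the Lumer--Phillips theorem, the second via an integration-by-parts identity combined with a finite-dimensional signature argument. The analytic engine throughout is the observation that $\langle Ax,x\rangle_{\mathcal H}+\langle x,Ax\rangle_{\mathcal H}$ collapses to a quadratic form in the boundary jet of $\mathcal Hx$, so that dissipativity of $A$ is encoded entirely by a finite matrix inequality. The implication (i)$\Rightarrow$(iii) is then immediate, since a generator of a contraction semigroup is dissipative and $\langle Ax,x\rangle_{\mathcal H}+\langle x,Ax\rangle_{\mathcal H}=2\Re\langle Ax,x\rangle_{\mathcal H}\le0$.

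First I would establish the Green-type identity. Writing $e:=\mathcal Hx$ and using $x^{*}\mathcal H=e^{*}$ (as $\mathcal H$ is Hermitian), one gets
\[
  \langle Ax,x\rangle_{\mathcal H}+\langle x,Ax\rangle_{\mathcal H}
  =\frac12\int_a^b\Big(e^{*}\sum_{k=0}^N P_k e^{(k)}+\Big(\sum_{k=0}^N P_k e^{(k)}\Big)^{*}e\Big)\d\zeta .
\]
Integrating the $k$-th summand by parts $k$ times and invoking $P_k^{*}=(-1)^{k+1}P_k$ cancels all interior terms, leaving only boundary contributions. Collecting these into the jet vector $\beta\in\CC^{2nN}$ of boundary values appearing in \eqref{eq:2HZ} gives $\langle Ax,x\rangle_{\mathcal H}+\langle x,Ax\rangle_{\mathcal H}=\beta^{*}\diag(Q,-Q)\beta$, with $Q$ as in \eqref{eq:Q}. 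A short index computation shows $Q^{*}=Q$ (again using $P_k^{*}=(-1)^{k+1}P_k$), and hence $R_{ext}^{*}\begin{bmatrix}0&I_{nN}\\I_{nN}&0\end{bmatrix}R_{ext}=\diag(Q,-Q)$, so the identity rewrites as $\langle Ax,x\rangle_{\mathcal H}+\langle x,Ax\rangle_{\mathcal H}=(R_{ext}\beta)^{*}\begin{bmatrix}0&I_{nN}\\I_{nN}&0\end{bmatrix}(R_{ext}\beta)$.

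For (ii)$\Leftrightarrow$(iii) I set $b:=R_{ext}\beta$ and $J:=\begin{bmatrix}0&I_{nN}\\I_{nN}&0\end{bmatrix}$, so that the boundary condition $W_B\beta=0$ reads $W_BR_{ext}^{-1}b=0$, i.e.\ $b\in\ker(W_BR_{ext}^{-1})$. Since $\mathcal H$ is boundedly invertible and the $H^N$-jets of $e$ at $a,b$ are unconstrained, the trace map $x\mapsto b$ sends $\dom(A)$ onto this kernel; thus (iii) is equivalent to $b^{*}Jb\le0$ for all $b\in\ker(W_BR_{ext}^{-1})$. The remaining point is the finite-dimensional lemma: for the full-rank $nN\times2nN$ matrix $V:=W_BR_{ext}^{-1}$ and the signature-$(nN,nN)$ matrix $J$, the form $b^{*}Jb$ is nonpositive on the $nN$-dimensional subspace $\ker V$ if and only if $VJV^{*}\ge0$, which is precisely \eqref{eq:WB}. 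Here $\ker V$ is a \emph{maximal} $J$-nonpositive subspace, and this is a standard inertia/duality fact for such subspaces.

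The last piece is (iii)$\Rightarrow$(i). Dissipativity is exactly (iii), so by Lumer--Phillips it remains to verify the range condition $\ran(\lambda_0-A)=X$ for some $\lambda_0>0$. I would solve $(\lambda_0-A)x=f$ by setting $e=\mathcal Hx$ and using invertibility of $P_N$ to recast it as a first-order linear ODE system for the jet of $e$ with inhomogeneity $\mathcal H^{-1}f$; its solution space is $nN$-dimensional, and the $nN$ full-rank boundary conditions must select a unique member. I expect this surjectivity step to be the main obstacle: one has to show that the boundary operator restricted to the ODE solution space is bijective for a suitable $\lambda_0$, which is where the full rank of $W_B$ and the admissibility \eqref{eq:WB} are genuinely used. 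A clean alternative that sidesteps the explicit resolvent computation is to check that $-A^{*}$ is again a port-Hamiltonian operator of the same form and is dissipative — the symmetry of condition \eqref{eq:WB} delivers this — so that $A$ and $A^{*}$ are both densely defined and dissipative, whence $A$ generates a contraction semigroup.
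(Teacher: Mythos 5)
The paper itself offers no proof of this theorem: it is imported verbatim from \cite{GoZM05,JMZ_2015,JZ}, so there is no internal argument to compare against. Measured against the standard proof in those references, your outline is essentially the canonical one, and the steps you do carry out are correct. The Green-type identity reducing $\langle Ax,x\rangle_{\mathcal H}+\langle x,Ax\rangle_{\mathcal H}$ to a boundary quadratic form in the jet of $e=\mathcal Hx$ is exactly \cite[Lemma~3.1]{GoZM05} (the same identity the paper later invokes in the proof of Theorem~\ref{thm:main1}, (iv)$\Leftrightarrow$(v)); the congruence $R_{ext}^{*}\left[\begin{smallmatrix}0&I_{nN}\\I_{nN}&0\end{smallmatrix}\right]R_{ext}=\operatorname{diag}(Q,-Q)$ checks out (using $Q^{*}=Q$, which indeed follows from $P_k^{*}=(-1)^{k+1}P_k$); and the finite-dimensional equivalence, that for full-rank $V=W_BR_{ext}^{-1}$ the form $b^{*}Jb$ is nonpositive on the $nN$-dimensional subspace $\ker V$ iff $VJV^{*}\geq 0$, is the correct maximal-nonpositive-subspace lemma (visible after diagonalising $J$ and writing $\ker V$ as the graph of a contraction). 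Together with surjectivity of the trace map onto $\ker V$, this gives (ii)$\Leftrightarrow$(iii), and (i)$\Rightarrow$(iii) is immediate from dissipativity of generators.

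The one place where you substitute an assertion for the substantive work is the range condition in (iii)$\Rightarrow$(i), and you are right to flag it as the main obstacle. The phrase ``the symmetry of condition \eqref{eq:WB} delivers this'' conceals the actual content of \cite{GoZM05} and \cite[Ch.~7]{JZ}: one must compute $\dom(A^{*})$, show that $A^{*}$ is again a differential operator of the same form whose domain is cut out by a complementary boundary matrix $\tilde W_B$, and verify that $\tilde W_B$ satisfies the dual inequality. The underlying mechanism is that $\ker(W_BR_{ext}^{-1})$ being a maximal $J$-nonpositive subspace forces its $J$-orthogonal companion (which encodes the adjoint boundary conditions) to be maximal $J$-nonnegative; only then does the ``dissipative operator with dissipative adjoint'' corollary of Lumer--Phillips apply. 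Your alternative route via direct resolvent construction faces the same issue, as you note. So the proposal is a faithful sketch of the literature proof rather than a complete argument, with the adjoint/range step being the part that would still need to be written out.
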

Although the characterisation of contraction semigroups is given for the operator associated to the  p.d.e.\ (\ref{eq:1HZ}) with boundary conditions (\ref{eq:2HZ}), other properties have only been studied for the case $N=1$, see e.g. \cite{JZ}. Recently, for that class the characterisation of exponential stability was found.
In order to state this result, we need to consider the following parametrised ordinary differential equation
\begin{equation}
\label{eq:8HZ}
    \frac{dv}{d\zeta}(\zeta) = \i \omega P_1^{-1}(\mathcal{H}(\zeta)^{-1}+P_0) v(\zeta)\text{ for }\zeta\in (a,b) \mbox{ and } \omega \in {\mathbb R}.
\end{equation}
Let $\Phi_{\omega}$ denote its fundamental solution\footnote{In Appendix \ref{app:wp} we show that this exists and is uniquely determined, even when the right-hand side of (\ref{eq:8HZ}) is not continuous.}, i.e., 
\begin{equation}
\label{eq:Phit}
  \Phi_{\omega} \colon [a,b)\to \CC^{n\times n} \mbox{ with }  \Phi_{\omega}(a) = I_n,
\end{equation} 
and satisfies (\ref{eq:8HZ}). 
The characterisation of exponential stability now reads as follows.
\begin{thm}[\cite{TW22_expstab}]
\label{thm:charexpo}  
Let $(A, \dom(A))$ be given as in Theorem \ref{thm:gencon} for $N=1$, and be such that $A$ generates a contraction semigroup on $X$. Furthermore, assume that 
\begin{equation}
\label{eq:(B)}
   \sup_{\omega\in\mathbb{R}}\|\Phi_{\omega}\|_{\infty}=\sup_{\omega \in\mathbb{R}}\sup_{\zeta\in(a,b)}\|\Phi_{\omega}(\zeta)\|<\infty.
\end{equation}
Then the following conditions are equivalent:
\begin{enumerate}[(i)]
\item $\left(T(t)\right)_{t \geq 0}$ is exponentially stable; 
\item  $Q_{\omega} \coloneqq W_B\begin{bmatrix} \Phi_{\omega}(b) \\ I_n \end{bmatrix}$ is invertible and $\sup_{\omega \in \R}\| Q_{\omega}^{-1}\|<\infty$.
\end{enumerate}
\end{thm}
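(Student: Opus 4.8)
The plan is to reduce the statement to a resolvent criterion and then to translate the resolvent conditions into the finite-dimensional conditions on $Q_\omega$. Since, by hypothesis, $A$ generates a contraction semigroup on the Hilbert space $X$, the semigroup is in particular bounded, so the Gearhart--Pr\"uss theorem applies: $\left(T(t)\right)_{t\ge0}$ is exponentially stable if and only if $\{\i\omega:\omega\in\R\}\subseteq\rho(A)$ and $\sup_{\omega\in\R}\|(\i\omega-A)^{-1}\|_{\mathcal L(X)}<\infty$. It therefore suffices to establish (a) that $\i\omega\in\rho(A)$ if and only if $Q_\omega$ is invertible, and (b) that, under the standing assumption \eqref{eq:(B)}, the resolvents are uniformly bounded along the imaginary axis if and only if $\sup_{\omega}\|Q_\omega^{-1}\|<\infty$.

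For (a) I would solve the boundary value problem explicitly. Writing $u:=\mathcal Hx$, the resolvent equation $(\i\omega-A)x=f$ becomes a first-order inhomogeneous linear system on $(a,b)$ whose homogeneous part is governed by \eqref{eq:8HZ} with fundamental matrix $\Phi_\omega$, and whose forcing is $g:=-P_1^{-1}f$. By variation of parameters every solution has the form
\[
 u(\zeta)=\Phi_\omega(\zeta)\,c+\Phi_\omega(\zeta)\int_a^\zeta \Phi_\omega(s)^{-1}g(s)\,\d s,\qquad c:=u(a)\in\CC^n.
\]
Substituting the resulting value of $u(b)$ into the boundary condition $W_B\bigl[\begin{smallmatrix}u(b)\\u(a)\end{smallmatrix}\bigr]=0$ collapses the problem to a single linear equation $Q_\omega c=r(f)$ in $\CC^n$, where $r(f)$ collects the inhomogeneous contribution and $Q_\omega=W_B\bigl[\begin{smallmatrix}\Phi_\omega(b)\\I_n\end{smallmatrix}\bigr]$ is exactly the $n\times n$ matrix of the statement. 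Hence the problem is uniquely solvable for every $f\in X$ precisely when $Q_\omega$ is invertible, which is the equivalence $\i\omega\in\rho(A)\Leftrightarrow Q_\omega$ invertible; solving $c=Q_\omega^{-1}r(f)$ then gives a closed formula for $(\i\omega-A)^{-1}f=\mathcal H^{-1}u$ in terms of $\Phi_\omega$, $Q_\omega^{-1}$ and $\mathcal H^{\pm1}$.

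For (b) I would estimate this formula uniformly in $\omega$. Assumption \eqref{eq:(B)} bounds $\Phi_\omega(\zeta)$ uniformly; together with $\mathcal H\ge mI$, $\|\mathcal H\|_\infty<\infty$, and the port-Hamiltonian structure $P_1^*=P_1$, $P_0^*=-P_0$ — which makes the flow of \eqref{eq:8HZ} preserve, or at most boundedly distort, the quadratic form $v\mapsto v^*P_1v$ and thereby controls the two-parameter propagator $\Phi_\omega(\zeta)\Phi_\omega(s)^{-1}$ uniformly — the variation-of-parameters term is bounded independently of $\omega$. What remains multiplies $\|Q_\omega^{-1}\|$, so $\sup_\omega\|Q_\omega^{-1}\|<\infty$ yields a uniform resolvent bound and hence (ii)$\Rightarrow$(i). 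For the converse I would argue contrapositively: given $\omega_k$ with $\|Q_{\omega_k}^{-1}\|\to\infty$, choose unit vectors along which this norm is nearly attained and engineer data $f_k$ of controlled norm whose resolvent image satisfies $u(a)=c_k\approx Q_{\omega_k}^{-1}(\cdots)$; since $\Phi_{\omega_k}(a)=I_n$, the solution stays comparable to $\|c_k\|$ on a fixed subinterval near $\zeta=a$, so $\|(\i\omega_k-A)^{-1}f_k\|\gtrsim\|Q_{\omega_k}^{-1}\|\to\infty$, contradicting a uniform resolvent bound.

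The main obstacle I expect is the uniform-in-$\omega$ control in (b), specifically the convolution term: assumption \eqref{eq:(B)} only bounds the forward fundamental solution $\Phi_\omega$, whereas the estimate also involves the backward propagator $\Phi_\omega(s)^{-1}$, whose uniform boundedness is not automatic for a general first-order system and must be extracted from the self- and skew-adjointness of $P_1,P_0$ and the positivity of $\mathcal H$ (it is immediate when $P_0=0$, where $v^*P_1v$ is conserved along the flow, and needs a perturbative or change-of-variable argument otherwise). The second delicate point is the test-function construction for (i)$\Rightarrow$(ii), where the finite-dimensional quantity $\|Q_\omega^{-1}\|$ has to be read off from the operator norm of the resolvent uniformly in $\omega$.
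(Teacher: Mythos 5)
First, a caveat on the comparison: this paper does not prove Theorem \ref{thm:charexpo} at all---the result is imported verbatim from \cite{TW22_expstab}---so there is no in-paper proof to measure you against; I assess your argument on its own terms. Your architecture (Gearhart--Pr\"uss for the bounded contraction semigroup on a Hilbert space, variation of parameters for $u=\mathcal{H}x$, collapse of the boundary condition to a finite-dimensional equation $Q_\omega c=r(f)$) is sound and is indeed the natural route. However, the two difficulties you flag at the end are genuine gaps, and the mechanisms you sketch for closing them do not work as stated. For the backward propagator: conservation of the form $v\mapsto v^*P_1v$ along \eqref{eq:8HZ} (which does hold, even with $P_0\neq 0$) is useless for bounding $\Phi_\omega(\zeta)^{-1}$, because $P_1$ is merely Hermitian, not definite, so the conserved quantity gives no lower bound on $\|\Phi_\omega(\zeta)v\|$. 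The correct argument is Liouville's formula: $\operatorname{tr}\bigl(P_1^{-1}(\i\omega\mathcal{H}(\zeta)^{-1}-P_0)\bigr)$ is purely imaginary (the trace of a product of two Hermitian matrices is real, and of a Hermitian with a skew-Hermitian matrix purely imaginary), hence $|\det\Phi_\omega(\zeta)|=1$ and $\|\Phi_\omega(\zeta)^{-1}\|\leq C_n\|\Phi_\omega(\zeta)\|^{n-1}$, which is uniformly bounded by \eqref{eq:(B)}.

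The second gap, in (i)$\Rightarrow$(ii), is one you do not name. Writing $W_B=[W_1\ W_2]$ with $n\times n$ blocks, the inhomogeneous term in your reduced equation is $r(f)=W_1\Phi_\omega(b)\int_a^b\Phi_\omega(s)^{-1}P_1^{-1}f(s)\d s$, so as $f$ ranges over $X$ the reachable right-hand sides sweep only $\ran W_1$, which may be a proper subspace of $\CC^n$. ``Choosing unit vectors along which $\|Q_{\omega_k}^{-1}\|$ is nearly attained'' is then pointless if those vectors are not reachable: what the construction controls is $\|Q_\omega^{-1}W_1\Phi_\omega(b)\|$, equivalently $\|Q_\omega^{-1}W_2\|=\|I-Q_\omega^{-1}W_1\Phi_\omega(b)\|$, not $\|Q_\omega^{-1}\|$ itself. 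To finish you must add that, since $W_B$ has full rank, every $d\in\CC^n$ can be written as $d=W_1\xi+W_2\eta$ with $\|(\xi,\eta)\|\leq C\|d\|$, whence
\[
  \|Q_\omega^{-1}d\|\leq \|Q_\omega^{-1}W_1\Phi_\omega(b)\|\,\|\Phi_\omega(b)^{-1}\xi\|+\|Q_\omega^{-1}W_2\|\,\|\eta\|,
\]
and here the uniform bound on $\Phi_\omega(b)^{-1}$ from the first point is needed a second time. With these two additions (plus the routine lower bound $\|\Phi_\omega(\cdot)v\|_{L_2}\gtrsim\|v\|$, again a consequence of the uniform bound on $\Phi_\omega^{-1}$), your plan goes through.
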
 

Inspired by this theorem, we characterise the semi-uniform/asymptotic stability of the contraction semigroup. For this, we invoke the celebrated Batty--Duyckaerts theorem, \cite{Batty_Duyckaerts_2008}. This result can be formulated for the general class, and so we do not assume that $N=1$.  We introduce the following extension of the differential equation, see also (\ref{eq:8HZ}),
\begin{equation}
\label{eq:12HZ}
    \frac{dv}{d\zeta}(\zeta) = {\mathcal P}_{\lambda}(\zeta)v(\zeta)\text{ for }\zeta\in (a,b) \mbox{ and } \lambda \in {\mathbb C},
\end{equation}
where $v$ takes its values in ${\mathbb C}^{nN}$  and 
\begin{equation}
\label{eq:12aHZ}
  {\mathcal P}_{\lambda}(\zeta) = \begin{bmatrix} 0& I & 0 & \cdots &  0 \\
  0& 0 & I & 0& \vdots\\
  \vdots & &\ddots & \ddots & 0\\
  0 & \cdots &\cdots & 0 & I\\
  \lambda P_N^{-1} {\mathcal H}(\zeta)^{-1}-P_N^{-1} P_0 & -P_N^{-1} P_1 & \cdots & \cdots &-P_N^{-1}P_{N-1}
  \end{bmatrix}
\end{equation}
Let $\Psi_{\lambda}$ denote its fundamental solution\footnote{In Appendix \ref{app:wp} we show that this exist, even when the right-hand side of (\ref{eq:12HZ}) is not continuous}, i.e., 
\begin{equation}
\label{eq:13HZ}
  \Psi_{\lambda} \colon [a,b)\to \CC^{nN\times nN} \mbox{ with }  \Psi_{\lambda}(a) = I_{nN},
\end{equation} 
and satisfies (\ref{eq:12HZ}). Note that for $N=1$ we have that $\Psi_{i\omega} = \Phi_{\omega}$, see (\ref{eq:8HZ}), (\ref{eq:Phit}).
\begin{thm}\label{thm:main1}
Assume  that $(A, \dom(A))$ as given in (\ref{eq:4HZ}) and (\ref{eq:5HZ}) generates a contraction semigroup $\left(T(t)\right)_{t\geq 0}$ on $X$. 
 Then the following conditions are equivalent:
\begin{enumerate}[(i)]
\item 
  $\left(T(t)\right)_{t\geq 0}$ is \emph{semi-uniformly stable (with decay rate $f\colon [0,\infty)\to [0,\infty)$)}, i.e., $f(t)\to 0$ as $t\to\infty$ and for all $x\in \dom(A)$
\[
   \|T(t)x\|_{\mathcal H}\leq f(t)\|x\|_{\dom(A)} \quad t\geq 0.
\]
\item
  $\left(T(t)\right)_{t\geq 0}$ is {\em asymptotically stable}, i.e, for all $x \in X$ there holds $T(t) x \rightarrow 0$ as $t \rightarrow \infty$;
\item
  The operator $A$ with domain $\dom(A)$ has no eigenvalues on the imaginary axis;
\item 
  The square matrix ${\mathcal Q}(i\omega) \coloneqq W_B\begin{bmatrix} \Psi_{i\omega} (b) \\ I_{nN}\end{bmatrix}$ is invertible for all $\omega \in \R$.
\item For all $\omega \in {\mathbb R}$ the set \[\{ v_a \in {\mathbb C}^{nN} \mid{\mathcal Q}(i\omega)v_a=0 \mbox{ and } v_a^* \left[\begin{smallmatrix} \Psi_{i\omega} (b)^* & I_{nN} \end{smallmatrix}\right]  \left[\begin{smallmatrix} Q & 0 \\ 0 & -Q \end{smallmatrix} \right] \left[\begin{smallmatrix} \Psi_{i\omega} (b) \\ I_{nN} \end{smallmatrix}\right] v_a =0\}\]contains only the zero element. 
\end{enumerate}
\end{thm}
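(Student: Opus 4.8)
The plan is to establish the block of equivalences (i)--(iii) by abstract $C_0$-semigroup arguments, to identify the spectral condition (iii) with the matrix condition (iv) through the companion-form reformulation of the eigenvalue problem, and finally to match (iv) with (v) by means of the port-Hamiltonian energy balance. For the abstract block I would argue as follows. The implication (i)$\Rightarrow$(ii) holds because semi-uniform stability forces $\|T(t)x\|_{\mathcal H}\to0$ for every $x\in\dom(A)$, and density of $\dom(A)$ together with the contractivity of $(T(t))_{t\ge0}$ propagates this to all of $X$. For (ii)$\Rightarrow$(iii) I argue by contraposition: if $\i\omega$ were an eigenvalue with eigenvector $x$, then $T(t)x=\e^{\i\omega t}x$ does not tend to zero, contradicting asymptotic stability. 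The crucial implication is (iii)$\Rightarrow$(i). Here I would first note that $\dom(A)$, equipped with the graph norm, embeds compactly into $X$: since $\mathcal H$ is bounded with bounded inverse the map $x\mapsto\mathcal Hx$ is an isomorphism of $L_2$, while $H^N((a,b);\CC^n)$ embeds compactly into $L_2$. Consequently $A$ has compact resolvent and $\sigma(A)$ consists solely of isolated eigenvalues, so (iii) yields $\i\R\subseteq\rho(A)$. As $(T(t))_{t\ge0}$ is a contraction semigroup, hence bounded, the Batty--Duyckaerts theorem then delivers semi-uniform stability with some rate $f$, which is (i).

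For (iii)$\Leftrightarrow$(iv) I would rewrite the eigenvalue equation $Ax=\lambda x$. Setting $y=\mathcal Hx$ and $v=(y,y',\dots,y^{(N-1)})^\top\in\CC^{nN}$, the equation becomes the first-order system $v'=\mathcal P_\lambda v$, whose solution is $v(\zeta)=\Psi_\lambda(\zeta)v(a)$. The boundary conditions in \eqref{eq:5HZ} read exactly $W_B\left[\begin{smallmatrix}v(b)\\ v(a)\end{smallmatrix}\right]=0$, that is, $\mathcal Q(\lambda)v(a)=0$. Since $\mathcal H$ is boundedly invertible, a nonzero $x\in L_2$ corresponds to a nonzero $v$, and the regularity $\mathcal Hx\in H^N$ of the resulting eigenfunction is guaranteed by the well-posedness established in Appendix \ref{app:wp}. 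Thus $\i\omega$ is an eigenvalue of $A$ precisely when the square matrix $\mathcal Q(\i\omega)$ is singular, which is the equivalence of (iii) and (iv).

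Finally, for (iv)$\Leftrightarrow$(v), the implication (iv)$\Rightarrow$(v) is immediate because invertibility of $\mathcal Q(\i\omega)$ forces $\ker\mathcal Q(\i\omega)=\{0\}$, so the set in (v) reduces to $\{0\}$. For (v)$\Rightarrow$(iv) I argue by contraposition: a nonzero $v_a\in\ker\mathcal Q(\i\omega)$ produces, as above, an eigenfunction $x$ of $A$ for the purely imaginary eigenvalue $\i\omega$. For such $x$ one has $\langle Ax,x\rangle_{\mathcal H}+\langle x,Ax\rangle_{\mathcal H}=\i\omega\|x\|_{\mathcal H}^2-\i\omega\|x\|_{\mathcal H}^2=0$. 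On the other hand, the boundary quadratic-form representation of this expression from \cite{GoZM05} gives $\langle Ax,x\rangle_{\mathcal H}+\langle x,Ax\rangle_{\mathcal H}=v(b)^*Qv(b)-v(a)^*Qv(a)=v_a^*\bigl(\Psi_{\i\omega}(b)^*Q\Psi_{\i\omega}(b)-Q\bigr)v_a$, and the latter is exactly $v_a^*\left[\begin{smallmatrix}\Psi_{\i\omega}(b)^* & I_{nN}\end{smallmatrix}\right]\left[\begin{smallmatrix}Q&0\\0&-Q\end{smallmatrix}\right]\left[\begin{smallmatrix}\Psi_{\i\omega}(b)\\ I_{nN}\end{smallmatrix}\right]v_a$. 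Hence this $v_a$ satisfies both conditions defining the set in (v), so that set is nontrivial and (v) fails.

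I expect the main obstacle to be the implication (iii)$\Rightarrow$(i): one must confirm compactness of the resolvent so that the absence of imaginary eigenvalues genuinely gives $\i\R\subseteq\rho(A)$, and then invoke Batty--Duyckaerts in its correct qualitative form. A secondary delicate point is the energy-balance identity underlying (v)$\Rightarrow$(iv), where one has to verify that the boundary quadratic form from \cite{GoZM05} matches, with all sign and normalisation conventions, the matrix $\left[\begin{smallmatrix}Q&0\\0&-Q\end{smallmatrix}\right]$ appearing in (v).
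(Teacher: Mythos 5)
Your proposal is correct and follows essentially the same route as the paper: the abstract block (i)--(iii) via contractivity, contraposition, compact resolvent and Batty--Duyckaerts; (iii)$\Leftrightarrow$(iv) via the companion-form reformulation (the paper's Lemma \ref{L:eig}); and (iv)$\Leftrightarrow$(v) via the boundary energy identity from \cite[Lemma 3.1]{GoZM05}. The only cosmetic difference is that in (v)$\Rightarrow$(iv) you pass through the eigenfunction and the dissipation form $\langle Ax,x\rangle_{\mathcal H}+\langle x,Ax\rangle_{\mathcal H}=0$, whereas the paper obtains $v(b)^*Qv(b)=v(a)^*Qv(a)$ directly for any solution of the parametrised o.d.e.\ with $\lambda=\i\omega$; both rest on the same identity.
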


The proof of this theorem will be given in the following section. Furthermore, we also adopt the insights drawn from this equivalence to shed some more light on the characterisation of exponential stability from \cite{TW22_expstab}. We exemplify our results in the sections \ref{Sec:3} and \ref{Sec:4}. We provide a conclusion afterwards. This manuscript is supplemented with an appendix gathering known material for non-autonomous o.d.e.'s with bounded and measurable coefficients.

\section{Characterisation of asymptotic/semi-uniform stability}

We recall the celebrated Batty--Duyckaerts result on semi-uniform stability.
\begin{thm}[{{Batty--Duyckaerts, \cite{Batty_Duyckaerts_2008}; see, \cite[Theorem 3.4]{CST20}}}]\label{thm:BD} Let $\left(T(t)\right)_{t\geq 0}$ be a boun\-ded $C_0$-semigroup on a Banach space $X$, with generator $\mathcal{A}$, $\mu\in \rho(\mathcal{A})$. Then the following conditions are equivalent:
\begin{enumerate}[(i)]
\item $\sigma(\mathcal{A})\cap \i\R=\emptyset$;
\item $\lim_{t\to\infty} \|T(t)(\mu-\mathcal{A})^{-1}\|=0$;
\item there exists a function $f\colon [0,\infty)\to[0,\infty)$ with $f(t)\to 0$ as $t\to\infty$ and such that
\[
  \forall x_0\in \dom(\mathcal{A})\colon \|T(t)x\|\leq f(t)\|x_0\|_{\dom(\mathcal{A})},
\]i.e., $T$ is \emph{semi-uniformly stable (with decay rate $f$)}.
\end{enumerate}
\end{thm}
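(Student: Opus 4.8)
The plan is to establish the equivalence (ii)$\Leftrightarrow$(iii) directly, the implication (ii)$\Rightarrow$(i) by an approximate-eigenvector argument, and the substantial implication (i)$\Rightarrow$(ii) by a Tauberian argument, which is the crux. Throughout write $R(\lambda,\mathcal A)=(\lambda-\mathcal A)^{-1}$ and set $M:=\sup_{t\ge 0}\|T(t)\|<\infty$. The equivalence (ii)$\Leftrightarrow$(iii) is purely algebraic: since $R(\mu,\mathcal A)$ is a bounded bijection of $X$ onto $\dom(\mathcal A)$ with bounded inverse $\mu-\mathcal A$, the graph norm $\|x_0\|_{\dom(\mathcal A)}$ and $\|(\mu-\mathcal A)x_0\|$ are equivalent. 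As $T(t)$ commutes with $R(\mu,\mathcal A)$, writing $x_0=R(\mu,\mathcal A)y$ gives $\|T(t)x_0\|=\|T(t)R(\mu,\mathcal A)y\|$, so $\sup_{\|x_0\|_{\dom(\mathcal A)}\le 1}\|T(t)x_0\|$ and $\|T(t)R(\mu,\mathcal A)\|$ coincide up to the fixed constants relating the two norms. Hence (ii) holds iff (iii) holds with $f(t)=C\,\|T(t)R(\mu,\mathcal A)\|$.

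For (ii)$\Rightarrow$(i) I would argue by contraposition via approximate eigenvectors. If $\i\omega\in\sigma(\mathcal A)$ for some $\omega\in\R$, then, because boundedness of $(T(t))$ forces $\sigma(\mathcal A)\subseteq\{\Re\lambda\le 0\}$, the point $\i\omega$ lies on $\partial\sigma(\mathcal A)$ and is therefore an approximate eigenvalue: there are unit vectors $x_n\in\dom(\mathcal A)$ with $(\i\omega-\mathcal A)x_n\to 0$. From $\e^{-\i\omega t}T(t)x_n-x_n=\int_0^t \e^{-\i\omega s}T(s)(\mathcal A-\i\omega)x_n\,\d s$ one gets $\|T(t)x_n-\e^{\i\omega t}x_n\|\le tM\,\|(\mathcal A-\i\omega)x_n\|\to 0$ for each fixed $t$, and the resolvent identity gives $\|R(\mu,\mathcal A)x_n-(\mu-\i\omega)^{-1}x_n\|\to 0$. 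Combining these with $T(t)R(\mu,\mathcal A)=R(\mu,\mathcal A)T(t)$ yields $\|T(t)R(\mu,\mathcal A)x_n\|\to|\mu-\i\omega|^{-1}>0$ for each fixed $t$, whence $\|T(t)R(\mu,\mathcal A)\|\ge|\mu-\i\omega|^{-1}$ for all $t$, contradicting (ii). Thus $\i\R\subseteq\rho(\mathcal A)$.

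The core is (i)$\Rightarrow$(ii). I would pass to Laplace transforms: for $\Re\lambda>0$ one has $\int_0^\infty \e^{-\lambda t}T(t)R(\mu,\mathcal A)\,\d t=R(\lambda,\mathcal A)R(\mu,\mathcal A)$, and by (i) the resolvent set contains a neighbourhood of $\i\R$, so this operator-valued transform extends holomorphically to a neighbourhood of the closed right half-plane. The family $g(t):=T(t)R(\mu,\mathcal A)$ is bounded in operator norm (by $M\|R(\mu,\mathcal A)\|$) and Lipschitz in operator norm, since $g'(t)=T(t)\mathcal A R(\mu,\mathcal A)$ with $\mathcal A R(\mu,\mathcal A)=\mu R(\mu,\mathcal A)-I$ bounded. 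The plan is then to invoke a Tauberian theorem of Ingham--Karamata type, in its operator-valued form with operator-norm conclusion: a bounded, uniformly continuous $B(X)$-valued function whose Laplace transform extends holomorphically across $\i\R$ tends to $0$ in norm. Applied to $g$, this gives exactly $\|T(t)R(\mu,\mathcal A)\|\to 0$, i.e.\ (ii). The extra factor $R(\mu,\mathcal A)$ is precisely what supplies the uniform continuity and turns the conclusion into an operator-norm (rather than merely strong) statement.

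The hard part is this last step, and within it the fact that (i) supplies only holomorphy of $R(\cdot,\mathcal A)$ near $\i\R$ with no quantitative control: $\|R(\i s,\mathcal A)\|$ may grow arbitrarily fast as $|s|\to\infty$. The contour/Tauberian argument must therefore run without any growth bound on the boundary resolvent, using only the local boundedness furnished by continuity together with the semigroup bound $M$; establishing the operator-valued Ingham--Karamata theorem under merely this hypothesis is the genuine analytic content of the Batty--Duyckaerts theorem and the one step I expect to be substantial.
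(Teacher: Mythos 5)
The paper does not prove this theorem at all: it is quoted verbatim from the literature, with the proof delegated to \cite{Batty_Duyckaerts_2008} and \cite[Theorem 3.4]{CST20}, so there is no in-paper argument to compare yours against. That said, your outline correctly reproduces the standard proof of the qualitative statement. The equivalence (ii)$\Leftrightarrow$(iii) via equivalence of the graph norm with $\|(\mu-\mathcal{A})x_0\|$ and commutation of $T(t)$ with $R(\mu,\mathcal{A})$ is complete and correct. Your contrapositive for (ii)$\Rightarrow$(i) is also sound: boundedness of the semigroup places $\sigma(\mathcal{A})$ in the closed left half-plane, so any $\i\omega\in\sigma(\mathcal{A})$ is a boundary point of the spectrum and hence an approximate eigenvalue, and your chain of estimates correctly yields the uniform lower bound $\|T(t)R(\mu,\mathcal{A})\|\geq|\mu-\i\omega|^{-1}$. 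For (i)$\Rightarrow$(ii), your reduction is exactly the classical one: $g(t)=T(t)R(\mu,\mathcal{A})$ is bounded and Lipschitz in operator norm (since $\mathcal{A}R(\mu,\mathcal{A})=\mu R(\mu,\mathcal{A})-I$ is bounded), its $B(X)$-valued Laplace transform $R(\lambda,\mathcal{A})R(\mu,\mathcal{A})$ extends holomorphically across $\i\R$ by (i) and openness of the resolvent set, and the operator-valued Ingham--Karamata theorem then gives $\|g(t)\|\to 0$. You have correctly identified that the entire analytic weight of the theorem sits inside that Tauberian step, where no growth bound on $\|R(\i s,\mathcal{A})\|$ as $|s|\to\infty$ is available; this is precisely what the iterated contour argument of Batty--Duyckaerts (and the presentation in \cite{CST20}) supplies. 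As a proposal your reduction is correct and citable; as a self-contained proof, the deferred Ingham--Karamata theorem remains the one unproved ingredient, which you openly acknowledge rather than obscure. One cosmetic point: the quantitative content of \cite{Batty_Duyckaerts_2008} (explicit decay rates in terms of the resolvent growth) is strictly stronger than the stated equivalence, and your route deliberately proves only the qualitative version --- which is all the paper uses.
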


In order to apply the Batty--Duyckaerts result, we provide some (standard) observations.
\begin{thm}\label{thm:Acomp} The operator $A$ with domain $\dom(A)$ as given by (\ref{eq:4HZ}) and (\ref{eq:5HZ}) has compact resolvent on the energy space $X$. 
\end{thm}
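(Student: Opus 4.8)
The plan is to reduce the statement to the compactness of the canonical embedding of $(\dom(A),\|\cdot\|_{\dom(A)})$ into $X$. Since $A$ generates a contraction semigroup it is closed, $\dom(A)$ is complete under the graph norm, and every $\mu\in\rho(A)$ yields a bounded operator $(\mu-A)^{-1}\colon X\to(\dom(A),\|\cdot\|_{\dom(A)})$ by the bounded inverse theorem. The resolvent $(\mu-A)^{-1}\colon X\to X$ is then the composition of this bounded map with the embedding $j\colon(\dom(A),\|\cdot\|_{\dom(A)})\hookrightarrow X$, so once $j$ is shown to be compact the resolvent is compact for every $\mu\in\rho(A)$ simultaneously.

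To analyse $j$ I would pass to the variable $y\coloneqq\mathcal{H}x$. Since $mI\leq\mathcal{H}\leq\|\mathcal{H}\|_\infty I$, the multiplication operators $M_\mathcal{H}$ and $M_{\mathcal{H}^{-1}}$ are bounded and boundedly invertible on $L_2((a,b);\CC^n)$, and $\|\cdot\|_{\mathcal H}$ is equivalent to the $L_2$-norm. For $x\in\dom(A)$ one has $y\in H^N((a,b);\CC^n)$ by definition of $\dom(A)$, and, writing $g\coloneqq Ax$, the identity $g=\bigl(P_N\frac{d^N}{d\zeta^N}+\cdots+P_0\bigr)y$ together with invertibility of $P_N$ gives
\[
  y^{(N)}=P_N^{-1}\Bigl(g-\sum_{k=0}^{N-1}P_k\,y^{(k)}\Bigr).
\]
Here $\|y\|_{L_2}\leq C\|x\|_{\dom(A)}$ and $\|g\|_{L_2}\leq C\|Ax\|_{\mathcal H}\leq C\|x\|_{\dom(A)}$. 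Invoking the interpolation (Ehrling) inequality $\|y^{(k)}\|_{L_2}\leq\varepsilon\|y^{(N)}\|_{L_2}+C_\varepsilon\|y\|_{L_2}$ for $0<k<N$, valid on the bounded interval $(a,b)$, I would choose $\varepsilon$ small to absorb the intermediate terms on the left and obtain the a priori estimate $\|y\|_{H^N}\leq C\bigl(\|y\|_{L_2}+\|g\|_{L_2}\bigr)\leq C\|x\|_{\dom(A)}$.

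Consequently the map $x\mapsto\mathcal{H}x$ sends bounded subsets of $\dom(A)$ to bounded subsets of $H^N((a,b);\CC^n)$. Since $(a,b)$ is a bounded interval, the Rellich--Kondrachov embedding $\iota\colon H^N((a,b);\CC^n)\hookrightarrow L_2((a,b);\CC^n)$ is compact, and $x=\mathcal{H}^{-1}y$ with $M_{\mathcal{H}^{-1}}$ bounded. Thus $j$ factors as the composition of the bounded map $x\mapsto\mathcal{H}x$ into $H^N$, the compact embedding $\iota$, and the bounded operator $M_{\mathcal{H}^{-1}}$ into $X$; being bounded$\,\circ\,$compact$\,\circ\,$bounded, $j$ is compact, which proves the claim. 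The main obstacle is the a priori regularity estimate of the second paragraph: one must control all intermediate derivatives of $y$ from only $\|y\|_{L_2}$ and the top-order combination $\|g\|_{L_2}$, which is precisely where the invertibility of $P_N$ and the interpolation inequality are essential; the boundary conditions themselves enter only through the membership $y\in H^N$ and play no further role in the bound.
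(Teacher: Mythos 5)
Your argument is correct and follows essentially the same route as the paper: a continuous embedding of $\dom(A)$ with its graph norm into $H^N((a,b);\CC^n)$, followed by the compactness of $H^N\hookrightarrow L_2$ and the bounded invertibility of multiplication by $\mathcal{H}$ to pass between $X$ and $L_2((a,b);\CC^n)$. The paper merely asserts the continuous embedding and invokes the Arzel\`a--Ascoli theorem where you invoke Rellich--Kondrachov; your derivation of the a priori estimate via $P_N^{-1}$ and the Ehrling interpolation inequality just fills in the detail the paper leaves implicit.
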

\begin{proof}
The assertion for $\mathcal{H}=I_n$ follows upon using the Arzel\`a--Ascoli Theorem since $\dom(A)$ embeds continuously into $H^N((a,b);{\mathbb C}^n)$. The general case then follows since $X$ is isomorphic to $L^2((a,b);{\mathbb C}^n)$. 
\end{proof}
\begin{lem}\label{lem:spectrumcomp} Let $\mathcal{B}$ be a closed, densely defined, linear operator on a Hilbert space $X$, with $\rho(\mathcal{B})\neq \emptyset$. If $\dom(\mathcal{B})\hookrightarrow X$ is compact, then 
\[\sigma(\mathcal{B})=\{\lambda \in \CC \mid 0<\dim(\ker(\lambda I-\mathcal{B}))<\infty \}\eqqcolon \sigma_p(\mathcal{B}).\]
\end{lem}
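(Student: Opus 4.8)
The plan is to reduce the statement to the Riesz--Schauder spectral theory of compact operators, using the resolvent to pass between $\mathcal{B}$ and a compact operator on $X$. Since $\rho(\mathcal{B})\neq\emptyset$, fix $\mu\in\rho(\mathcal{B})$ and set $R\coloneqq(\mu I-\mathcal{B})^{-1}$. As $\mathcal{B}$ is closed, $\dom(\mathcal{B})$ equipped with the graph norm is a Banach (indeed Hilbert) space and $R$ is a bounded bijection from $X$ onto $\dom(\mathcal{B})$ for that norm, by the closed graph/bounded inverse theorem. The first step is to observe that $R$, viewed as an operator on $X$, is the composition of this bounded map $X\to\dom(\mathcal{B})$ with the embedding $\dom(\mathcal{B})\hookrightarrow X$; since the embedding is compact by hypothesis, $R$ is a compact operator on $X$.

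Next I would invoke the spectral theorem for compact operators: $\sigma(R)\setminus\{0\}$ consists solely of eigenvalues of finite multiplicity, whose only possible accumulation point is $0$. The second step is then to transfer this information back to $\mathcal{B}$ through the elementary resolvent correspondence $\lambda\mapsto(\mu-\lambda)^{-1}$. Concretely, for $\nu\neq 0$ and $\lambda\coloneqq\mu-1/\nu$, a direct computation shows the equivalence $Rx=\nu x\iff x\in\dom(\mathcal{B})\text{ and }\mathcal{B}x=\lambda x$, whence $\ker(\nu I-R)=\ker(\lambda I-\mathcal{B})$. This identity simultaneously matches eigenvalues and eigenspaces (with multiplicities) of $R$ and $\mathcal{B}$, and it yields $\lambda\in\sigma(\mathcal{B})\iff(\mu-\lambda)^{-1}\in\sigma(R)\setminus\{0\}$.

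Combining the two steps gives the conclusion: every $\lambda\in\sigma(\mathcal{B})$ corresponds to a nonzero element of $\sigma(R)$, hence to an eigenvalue of $R$ of finite multiplicity, and by the kernel identification $\lambda$ is an eigenvalue of $\mathcal{B}$ with $0<\dim\ker(\lambda I-\mathcal{B})<\infty$; thus $\sigma(\mathcal{B})\subseteq\sigma_p(\mathcal{B})$ in the sense of the statement. The reverse inclusion $\sigma_p(\mathcal{B})\subseteq\sigma(\mathcal{B})$ is immediate, since any eigenvalue lies in the spectrum. The step demanding most care is the second one: verifying the exact spectral and kernel correspondence and, in particular, ruling out that a point of $\sigma(\mathcal{B})$ could lie in the spectrum without being an eigenvalue. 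This is precisely what compactness of $R$ buys us, since for compact operators approximate point spectrum away from $0$ forces genuine eigenvalues.
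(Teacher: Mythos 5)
Your proof is correct, but it follows a genuinely different route from the paper's. The paper works directly with the unbounded operator: it notes that $\mu I-\mathcal{B}$ is Fredholm (being boundedly invertible), that compactness of the embedding $\dom(\mathcal{B})\hookrightarrow X$ makes $(\lambda-\mu)I$ a \emph{relatively compact perturbation} of $\mu I-\mathcal{B}$, and concludes via stability of the Fredholm property and index that $\lambda I-\mathcal{B}$ is Fredholm of index $0$; failure of invertibility then forces a nontrivial, automatically finite-dimensional, kernel. You instead transfer everything to the bounded compact operator $R=(\mu I-\mathcal{B})^{-1}$ and invoke classical Riesz--Schauder theory together with the resolvent spectral mapping $\sigma(R)\setminus\{0\}=\{(\mu-\lambda)^{-1}\mid \lambda\in\sigma(\mathcal{B})\}$ and the eigenspace identification $\ker(\nu I-R)=\ker(\lambda I-\mathcal{B})$. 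Both arguments hinge on the same compactness of the resolvent, established identically (bounded map $X\to\dom(\mathcal{B})$ composed with the compact embedding). Your version has the advantage of needing only textbook spectral theory of compact operators rather than the machinery of unbounded Fredholm operators; the paper's version isolates the index-zero statement more explicitly. The only step you state rather than prove is the spectral correspondence itself; for completeness one would record the factorisation $\nu I-R=\nu(\lambda I-\mathcal{B})R$ with $\lambda=\mu-1/\nu$, from which both the spectral mapping and the kernel identity follow at once since $R$ is a bijection of $X$ onto $\dom(\mathcal{B})$. This is standard and does not constitute a gap.
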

\begin{proof}
For the proof we invoke standard theory of (unbounded) Fredholm operators, see e.g., \cite[Theorem 3.6]{GW16} for an overview and suitable references. Let $\mu\in \rho(\mathcal{B})$ and $\lambda\in \sigma(\mathcal{B})$. Since $\mu I -\mathcal{B}$ is continuously invertible, $\mu I -\mathcal{B}$ is a Fredholm operator. The compactness of $\dom(\mathcal{B})\hookrightarrow X$ yields that $(\mu I-\mathcal{B})^{-1}$ is compact and, hence, $(\lambda-\mu)I$ is a relatively compact perturbation of $\mu I-\mathcal{B}$ and so $\lambda I-\mu I+(\mu I-\mathcal{B})=\lambda I-\mathcal{B}$ is Fredholm and $\ind(\mu I-\mathcal{B})=\ind(\lambda I-\mathcal{B})$. In particular, $\dim(\ker(\lambda I-\mathcal{B}))<\infty $. Since $\lambda \in \sigma(\mathcal{B})$, $\lambda I-\mathcal{B}$ fails to be continuously invertible, which can, thus, only happen if $\ker(\lambda I-\mathcal{B})\neq \{0\}$; whence $\dim(\ker(\lambda I-\mathcal{B}))>0$.
\end{proof}

The application of Theorem \ref{thm:BD} to port-Hamiltonian systems, reads as follows.
\begin{thm}\label{thm:susA} Assume $(A,\dom(A))$ as in Theorem \ref{thm:gencon} generates a contraction semigroup $\left( T(t) \right)_{t\geq 0}$. Then the following conditions are equivalent:
\begin{enumerate}
  \item[(i)] $\left(T(t)\right)_{t\geq 0}$ is semi-uniformly stable;
  \item[(ii)] $\sigma_p(A)\cap \i\R=\emptyset$. 
\end{enumerate}
\end{thm}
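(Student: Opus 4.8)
The plan is to assemble the three preceding results into a short chain of equivalences, using the Batty--Duyckaerts theorem as the bridge between semi-uniform stability and a spectral condition, and then the compactness of the resolvent to pass from the full spectrum to the point spectrum.

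First I would observe that a contraction semigroup is in particular a bounded $C_0$-semigroup, so that Theorem \ref{thm:BD} is applicable to $\left(T(t)\right)_{t\geq 0}$ with generator $A$. To invoke it I need a point $\mu\in\rho(A)$; this is immediate, since $A$ generates a $C_0$-semigroup and hence a right half-plane lies in $\rho(A)$ (alternatively, Theorem \ref{thm:Acomp} already guarantees $\rho(A)\neq\emptyset$). The equivalence of conditions (i) and (iii) in Theorem \ref{thm:BD} then yields that $\left(T(t)\right)_{t\geq 0}$ is semi-uniformly stable if and only if $\sigma(A)\cap \i\R=\emptyset$.

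Next I would upgrade this condition on the spectrum to one on the point spectrum. By Theorem \ref{thm:Acomp} the operator $A$ has compact resolvent, i.e.\ $\dom(A)\hookrightarrow X$ is compact, and we have just noted $\rho(A)\neq\emptyset$. Being a semigroup generator, $A$ is moreover closed and densely defined, so Lemma \ref{lem:spectrumcomp} applies and gives $\sigma(A)=\sigma_p(A)$; in particular $\sigma(A)\cap\i\R=\sigma_p(A)\cap\i\R$.

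Combining the two observations, semi-uniform stability is equivalent to $\sigma(A)\cap\i\R=\emptyset$, which is equivalent to $\sigma_p(A)\cap\i\R=\emptyset$, and this is precisely the asserted equivalence of (i) and (ii). The argument is essentially a bookkeeping assembly of the quoted results, so I do not anticipate a genuine obstacle; the only points that require care are verifying the hypotheses of the two cited statements, namely that $A$ is a closed, densely defined operator with nonempty resolvent set and compactly embedded domain, all of which are already in place.
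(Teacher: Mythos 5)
Your proposal is correct and follows exactly the route the paper takes: Theorem \ref{thm:BD} gives the equivalence of semi-uniform stability with $\sigma(A)\cap\i\R=\emptyset$, and Theorem \ref{thm:Acomp} together with Lemma \ref{lem:spectrumcomp} identifies $\sigma(A)$ with $\sigma_p(A)$. The paper merely states this assembly in one sentence; your version spells out the hypothesis checks but is the same argument.
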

\begin{proof}
The result is a straightforward consequence of Theorem \ref{thm:BD} in conjunction with the observations in Theorem \ref{thm:Acomp} and Lemma \ref{lem:spectrumcomp} to determine the type of spectrum $A$ can possibly have.
\end{proof}

\mbox{}From this result we see that the eigenvalues of $A$ will play an essential role. Therefore, we characterise these in the following lemma.
\begin{lem}
\label{L:eig} Let $(A,\dom(A))$ be given by (\ref{eq:4HZ}) and (\ref{eq:5HZ}) then $\lambda$ is an eigenvalue if and only if ${\mathcal Q}(\lambda) \coloneqq W_B\begin{bmatrix} \Psi_{\lambda} (b) \\ I_{nN}\end{bmatrix}$ is not invertible, see (\ref{eq:13HZ}).
\end{lem}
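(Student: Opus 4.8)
The plan is to reduce the eigenvalue equation $Ax=\lambda x$ to the first-order system (\ref{eq:12HZ}) and then read off the boundary conditions through the fundamental solution $\Psi_\lambda$. Concretely, I would first set $y\coloneqq\mathcal{H}x$, so that ``$x\in\dom(A)$ with $Ax=\lambda x$'' is, by (\ref{eq:4HZ}) and (\ref{eq:5HZ}), equivalent to $y\in H^N((a,b);\mathbb{C}^n)$ satisfying $P_N y^{(N)}+\cdots+P_1 y'+P_0 y=\lambda\,\mathcal{H}^{-1}y$ together with the boundary condition of (\ref{eq:5HZ}). Solving for the top-order term gives $y^{(N)}=\lambda P_N^{-1}\mathcal{H}^{-1}y-P_N^{-1}P_0 y-\cdots-P_N^{-1}P_{N-1}y^{(N-1)}$, and introducing $v\coloneqq(y,y',\dots,y^{(N-1)})^\top$ turns this into precisely $v'=\mathcal{P}_\lambda v$ with $\mathcal{P}_\lambda$ as in (\ref{eq:12aHZ}): the superdiagonal identity blocks encode $v_{k+1}=v_k'$, while the last block row reproduces the rewritten equation.

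Next I would invoke the fundamental solution. Every solution of $v'=\mathcal{P}_\lambda v$ has the form $v(\zeta)=\Psi_\lambda(\zeta)v_a$ with $v_a\coloneqq v(a)$, using $\Psi_\lambda(a)=I_{nN}$ from (\ref{eq:13HZ}). Since $\Psi_\lambda(\zeta)$ is invertible for every $\zeta$, the solution $v$ is nonzero exactly when $v_a\neq 0$, and hence $y$ (its first block) and $x=\mathcal{H}^{-1}y$ are nonzero exactly when $v_a\neq 0$. The boundary vector appearing in (\ref{eq:5HZ}), namely $\bigl(y(b),\dots,y^{(N-1)}(b),y(a),\dots,y^{(N-1)}(a)\bigr)^\top$, is exactly $\left[\begin{smallmatrix} v(b)\\ v(a)\end{smallmatrix}\right]=\left[\begin{smallmatrix}\Psi_\lambda(b)\\ I_{nN}\end{smallmatrix}\right]v_a$, so the boundary condition collapses to $\mathcal{Q}(\lambda)v_a=0$.

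Combining these observations, $\lambda$ is an eigenvalue if and only if there is a nonzero $v_a\in\mathbb{C}^{nN}$ with $\mathcal{Q}(\lambda)v_a=0$; as $\mathcal{Q}(\lambda)$ is a square $nN\times nN$ matrix, this is equivalent to $\mathcal{Q}(\lambda)$ failing to be invertible, which is the claim. The one point requiring genuine care—rather than being purely formal—is the regularity matching in the backward direction: given $v_a$ in the kernel, one defines $v=\Psi_\lambda(\,\cdot\,)v_a$ and must check that its first block $y$ truly lies in $H^N$ and that $x=\mathcal{H}^{-1}y\in\dom(A)$. Here I would use that $\mathcal{H}^{-1}$ is bounded (because $\mathcal{H}\geq mI$), so that $\mathcal{P}_\lambda\in L^\infty$, whence $v$ is absolutely continuous with $v'\in L^\infty\subset L^2$; iterating along the relations $v_{k+1}=v_k'$ then yields $y\in H^N$, with the existence, uniqueness and regularity of solutions of (\ref{eq:12HZ}) for merely bounded measurable coefficients supplied by Appendix \ref{app:wp}. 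This regularity bookkeeping is the main (though modest) obstacle; everything else is the algebraic translation of the eigenvalue problem into the companion system together with an evaluation of $\Psi_\lambda$ at the endpoints.
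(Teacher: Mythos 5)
Your proposal is correct and follows essentially the same route as the paper: reduce $Ax=\lambda x$ to the companion first-order system $v'=\mathcal{P}_\lambda v$, express $v$ via the fundamental solution, and translate the boundary condition in \eqref{eq:5HZ} into $\mathcal{Q}(\lambda)v(a)=0$ for a nonzero $v(a)$. Your extra care about the $H^N$-regularity in the backward direction fills in a step the paper dismisses as ``straightforward'', but the argument is the same.
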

\begin{proof}
Let $\lambda \in {\mathbb C}$ be an eigenvalue of $(A,\dom(A))$, i.e., $A x= \lambda x$ for $0\neq x \in \dom(A)$. Using (\ref{eq:4HZ}), we have that 
  \[
    \lambda x = Ax \Rightarrow \lambda x(\zeta) = \left(P_N \frac{d^N}{d\zeta^N} + \cdots + P_1 \frac{d}{d\zeta} + P_0\right)({\mathcal H} x)(\zeta)
  \]
  Introducing $v_1(\zeta) := ({\mathcal H} x)(\zeta)$, we can rewrite the above o.d.e.\ as
  \begin{equation}
  \label{eq:14HZ}
    \left(P_N \frac{d^N}{d\zeta^N} + \cdots + P_1 \frac{d}{d\zeta} + P_0\right)v_1(\zeta) - \lambda {\mathcal H}^{-1}(\zeta) v_1(\zeta) =0.
  \end{equation}
  Next we introduce
  \[
    v(\zeta) = \begin{bmatrix} v_1(\zeta) \\ \vdots\\ \frac{d^{N-1}v_1}{d\zeta^{N-1}}(\zeta) \end{bmatrix}
    \quad  
   \]
  and so with ${\mathcal P}_{\lambda}$ as defined in (\ref{eq:12aHZ}) we can formulate the $N$-th order o.d.e.\ (\ref{eq:14HZ}) as the first order o.d.e.\
  \[
    \frac{dv}{d\zeta}(\zeta) = {\mathcal P}_{\lambda}(\zeta) v(\zeta)
  \]
  and thus $v(\zeta) = \Psi_{\lambda}(\zeta) v(a)$. 
  
  Since $x$ was an eigenvector it is an element of $\dom(A)$ and thus $v_1={\mathcal H}x$ satisfies the boundary conditions as given in (\ref{eq:5HZ}). We can equivalently formulate these conditions by using $v$. This gives
  \[
    W_B \begin{bmatrix} v(b) \\ v(a) \end{bmatrix} =0 \Leftrightarrow W_B \begin{bmatrix} \Psi_{\lambda}(b) \\ I_{nN}  \end{bmatrix} v(a) =0.
  \]
  If $v(a)$ would be zero, then the function $v$ would be zero, and so $x$. This is not possible since $x$ is an eigenvector. So for the above to hold, the square matrix $W_B \begin{bmatrix} \Psi_{\lambda}(b) \\ I_{nN}  \end{bmatrix}$ must be singular. 
  
  Similarly, when $W_B \begin{bmatrix} \Psi_{\lambda}(b) \\ I_{nN}  \end{bmatrix}$ is singular, then we choose $v(a)$ in its kernel, and we define $v(\zeta) :=\Psi_{\lambda}(\zeta) v(a)$. It is straightforward to see that $x := {\mathcal H}^{-1} v_1$ is in the domain of $A$ and satisfies $Ax=\lambda x$.
\end{proof}

\subsection*{Proof of Theorem \ref{thm:main1}}

In this section we prove Theorem \ref{thm:main1}. 
\begin{itemize}
\item {\textbf{(i)}\ $\Rightarrow$ \textbf{(ii)}} Let $x_0 \in X$, and choose $\varepsilon >0$. Since the domain of $A$ lies dense in $X$ we can find an $x_1 \in \dom(A)$ such that $\|x_1 - x_0\| \leq \varepsilon$. Next choose  $t_1>0$ such that $\|T(t_1)x_1\| \leq \varepsilon$. Now
\[
  \|T(t_1) x_0\| \leq  \|T(t_1) (x_0-x_1) \| + \|T(t_1)x_1 \| \leq \|x_0-x_1\| + \|T(t_1)x_1 \| \leq 2 \varepsilon,
\]
where we have used that $\left(T(t)\right)_{t\geq 0}$ is a contraction semigroup. Using this once more, we find for $t \geq t_1$
\[
  \|T(t) x_0\| = \|T(t-t_1) T(t_1)x_0\| \leq  \| T(t_1)x_0\| \leq 2\varepsilon
\]
which shows that the semigroup is asymptotically stable.
\item  {\textbf{(ii)}\ $\Rightarrow$ \textbf{(iii)}} If $\lambda$ is an eigenvalue on the imaginary axis, then for the corresponding eigenvector $x_0$ there holds $T(t) x_0 = e^{\lambda t} x_0$. This will not converge to zero as $t \rightarrow \infty$. Hence since not (iii)\ implies not (ii), we have shown the implication form (ii)\ to (iii).
\item
 {\textbf{(iii)}\ $\Rightarrow$ \textbf{(i)}} See Theorem \ref{thm:susA}.
\item 
  {\textbf{(iii)}\ $\Leftrightarrow$ \textbf{(iv)}} Follows from Lemma \ref{L:eig}.
\item{\textbf{(iv)}\ $\Leftrightarrow$ \textbf{(v)}} It is clear that part (iv) implies part (v). So we concentrate on the other direction. We do this by showing that  if $v_a \in {\mathbb C}^{nN}$ is such that ${\mathcal Q}(i\omega)v_a =0$, then it also satisfies the second equality in item (v).  

Let $0\neq v_a \in {\mathbb C}^{nN}$ be such that ${\mathcal Q}(i\omega)v_a =0$. We define $v(\zeta)$ as the solution of (\ref{eq:12HZ}) with initial condition $v(a) = v_a$. Then it is easy to see that $v$ can be written as
\[
  v(\zeta) = \begin{bmatrix} v_1(\zeta) \\ \vdots\\ \frac{d^{N-1}v_1}{d\zeta^{N-1}}(\zeta) \end{bmatrix}
\]
with $v_1$ satisfying (\ref{eq:14HZ}) for $\lambda = i\omega$. Since ${\mathcal H}$ is symmetric, we find that
\begin{align*}
  0 =&\ v_1(\zeta)^* \left(i\omega {\mathcal H}(\zeta)^{-1} v_1(\zeta) \right) + \left(i\omega {\mathcal H}(\zeta)^{-1} v_1(\zeta) \right)^* v_1(\zeta) \\
  =&\ v_1(\zeta)^* \left( \left(P_N \frac{d^N}{d\zeta^N} + \cdots + P_1 \frac{d}{d\zeta} + P_0\right)v_1(\zeta)\right) + \\
  &\ \left( \left(P_N \frac{d^N}{d\zeta^N} + \cdots + P_1 \frac{d}{d\zeta} + P_0\right)v_1(\zeta)\right) ^* v_1(\zeta).
\end{align*}
Integrating this expression from $\zeta=a$ till $b$ and using  \cite[Lemma 3.1]{GoZM05}, we find that
\[
  0= v(b)^* Q v(b) - v(a)^*Qv(a).
\]
Since $v(a) =v_a$ and $v(b) = \Psi_{i\omega}(b)v_a$, this is the second equality in part (v).
\end{itemize}


\section{Exponential stability revisited}\label{sec:exprev}

Before we turn to our example we shortly revisit exponential stability and reformulate the conditions from \cite{TW22_expstab}. We start off with an elementary observation on families of invertible matrices.

\begin{prop}\label{prop:unifinv} Let $Q\colon \R\to \CC^{n\times n}$ be bounded and continuous. Assume that for all $t\in \R$, $Q(t)$ is invertible. Then the following conditions are equivalent:
\begin{enumerate}
  \item[(i)] $\sup_{t\in \R} \|Q(t)^{-1}\|<\infty$;
  \item[(ii)] for all $v\in \CC^n$, $v\neq 0$, $\liminf_{t\to\pm \infty} \|Q(t)v\|>0$.
\end{enumerate}
\end{prop}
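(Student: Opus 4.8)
The plan is to prove both implications directly, with the reverse implication carried out by contraposition. The key analytic fact I will exploit is the elementary identity, valid for any invertible matrix $M \in \CC^{n\times n}$,
\[
  \frac{1}{\|M^{-1}\|} = \min_{\|v\|=1}\|Mv\|,
\]
which reexpresses the uniform invertibility in (i) as a uniform lower bound on $\|Q(t)v\|$ over the unit sphere. Concretely, (i) holds with bound $M$ if and only if $\|Q(t)v\|\geq \|v\|/M$ for all $t\in\R$ and all $v\in\CC^n$.

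The implication \textbf{(i)}\ $\Rightarrow$ \textbf{(ii)} is then immediate: if $\sup_{t\in\R}\|Q(t)^{-1}\| = M < \infty$, then for every $v\neq 0$ we have $\|Q(t)v\|\geq \|v\|/M$ for all $t$, so $\liminf_{t\to\pm\infty}\|Q(t)v\|\geq \|v\|/M>0$.

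For \textbf{(ii)}\ $\Rightarrow$ \textbf{(i)} I would argue by contraposition: assume $\sup_{t\in\R}\|Q(t)^{-1}\|=\infty$ and produce a nonzero $v$ violating (ii). By the identity above this means $\inf_{t\in\R}\min_{\|v\|=1}\|Q(t)v\|=0$, so there is a sequence $(t_k)_k$ in $\R$ and unit vectors $v_k$ with $\|Q(t_k)v_k\|\to 0$. First I rule out that $(t_k)$ accumulates at a finite point: if, along a subsequence, $t_k\to t_0\in\R$, then by continuity $Q(t_k)\to Q(t_0)$, and by compactness of the unit sphere a further subsequence gives $v_k\to v$ with $\|v\|=1$; passing to the limit yields $\|Q(t_0)v\|=0$, contradicting the invertibility of $Q(t_0)$. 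Hence $(t_k)$ must admit a subsequence tending to $+\infty$ or to $-\infty$; by symmetry assume $t_k\to+\infty$.

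Extracting once more a convergent subsequence of unit vectors $v_k\to v$ with $\|v\|=1$, I estimate along $t_k\to+\infty$
\[
  \|Q(t_k)v\| \leq \|Q(t_k)\|\,\|v-v_k\| + \|Q(t_k)v_k\|.
\]
Here I use the \emph{boundedness} hypothesis: $\|Q(t_k)\|\leq C$ for some $C$, so the right-hand side tends to $0$. Therefore $\liminf_{t\to+\infty}\|Q(t)v\|\leq \lim_k\|Q(t_k)v\|=0$, which contradicts (ii) since $v\neq 0$. This completes the contrapositive. The two places where the hypotheses are genuinely needed — continuity to rule out finite accumulation via invertibility, and boundedness to transfer the vanishing from the moving vectors $v_k$ to the fixed limit $v$ — are the only subtle points; the main obstacle is organising the double extraction of subsequences (first in $t_k$, then in $v_k$) so that both limits are controlled simultaneously, which the compactness of the unit sphere together with the uniform bound $\|Q(t)\|\leq C$ resolves cleanly.
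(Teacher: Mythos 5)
Your proof is correct, and the forward implication is essentially the paper's (the identity $1/\|M^{-1}\|=\min_{\|v\|=1}\|Mv\|$ is just a repackaging of the paper's estimate $\|v\|\leq M\|Q(t)v\|$). For (ii)$\Rightarrow$(i), however, you take a genuinely different route. The paper works at the level of the matrix family: it forms the compact set $B=\overline{\{Q(t):t\in\R\}}$, shows every element of $B$ has nonzero determinant (using continuity and pointwise invertibility for finitely accumulating sequences, and condition (ii) for sequences escaping to $\pm\infty$), then extracts a uniform lower bound $|\det R|\geq\varepsilon$ on $B$ by compactness and converts this into a uniform bound on $\|Q(t)^{-1}\|$ via Cramer's rule and the boundedness of $Q$. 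You instead argue by contraposition at the level of vectors: unboundedness of $t\mapsto\|Q(t)^{-1}\|$ produces unit vectors $v_k$ with $\|Q(t_k)v_k\|\to 0$, the finite-accumulation case is excluded exactly as in the paper, and in the escaping case you pass the decay from the moving vectors $v_k$ to a fixed limit vector $v$ using the uniform bound $\|Q(t)\|\leq C$, directly contradicting (ii). The case analysis and the role of the two hypotheses (continuity at finite accumulation points, boundedness at infinity) are the same in both arguments, but your version avoids determinants and Cramer's rule entirely and never needs to re-derive the quantitative bound, since contraposition delivers it for free; the paper's version, in exchange, isolates the reusable geometric fact that the closure of the range consists of invertible matrices. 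Both are complete and correct.
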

\begin{proof}
 (i)$\Rightarrow$(ii): Let $M\coloneqq\sup_{t\in \R} \|Q(t)^{-1}\|$ and $v\in \CC^n$, $v\neq 0$. Then for all $t\in \R$
 \[
     \|v\|=\|Q(t)^{-1}Q(t)v\|\leq M\|Q(t)v\|.
 \]
 Taking the $\liminf$ on both sides of the inequality yields the assertion.
 
 (ii)$\Rightarrow$(i): Consider 
 \[
    B\coloneqq \overline{\{Q(t); t\in \R\}}\subseteq \CC^{n\times n}.
 \]
 By the boundedness of $Q$, $B$ is bounded and closed; hence compact. Next, we show that for all $Q_0\in B$, $|\det Q_0|>0$. For this let $Q_0\in B$. Then there exists a sequence  $(t_k)_{k \in {\mathbb N}}$ such that $Q(t_k)\to Q_0$ as $k\to \infty$. If $(t_k)_{k \in {\mathbb N}}$ accumulates at some $s\in \R$, we find a subsequence (not relabelled) such that $\lim_{k\to\infty} t_k=s$. By continuity, we infer $Q_0=Q(s)$, which is invertible by assumption. Hence, $\det Q_0\neq 0$. If $(t_k)_{k \in {\mathbb N}}$ accumulates at $-\infty$ or $\infty$, we may assume without restriction (the other case is similar), that we find a subsequence (again not relabelled) such that $\lim_{k\to\infty} t_k=\infty$. Let $v\in \CC^n$, $v\neq 0$. We infer, using (ii),
 \[
    \|Q_0 v\| =\lim_{k\to\infty} \|Q(t_k)v\|\geq \liminf_{t\to\pm \infty} \|Q(t)v\|>0.
 \]
 Hence, $0$ cannot be an eigenvalue of $Q_0$, that is, $\det Q_0\neq 0$. By compactness of $B$ and continuity of $\det$, we thus obtain that $|\det R|\geq \varepsilon>0$ for all $R\in B$ and some $\varepsilon>0$. Using the boundedness of $Q$ and Cramer's rule, the condition in (i) follows.
 \end{proof}

In the light of the characterisation of semi-uniform stability in Theorem \ref{thm:main1}, we shall revisit Theorem \ref{thm:charexpo}. For this, recall $\Psi_{i\omega}$ from \eqref{eq:13HZ} and $\mathcal{Q}(i\omega)$ from Theorem \ref{thm:main1}. 
Note that by Proposition \ref{prop:cd}, the mapping  $\omega\mapsto \Psi_{i\omega}$ and thus $\omega\mapsto \mathcal{Q}(i\omega)$ are continuous.
\begin{thm}
\label{thm:exp2} 
Assume  that $(A, \dom(A))$ as given in (\ref{eq:4HZ}) and (\ref{eq:5HZ}) generates a contraction semigroup $\left(T(t)\right)_{t\geq 0}$ on $X$ with $N=1$. Assume that $\sup_{\omega\in \R}\|\Psi_{i\omega} \|<\infty$.

 Then the following conditions are equivalent:
 \begin{enumerate}
   \item[(i)] $(T(t))_{t\geq 0}$ is exponentially stable;
   \item[(ii)] for all sequences $(\omega_k)_{k \in {\mathbb N}}$ in $\R$, the set
   \[
      \{ v_a \in {\mathbb C}^{n} \mid \liminf_{k\to\infty}\big(\|{\mathcal Q}(i\omega_k)v_a\|+\|v_a^* \left[\begin{smallmatrix} \Psi_{i\omega_k} (b)^* & I_{n} \end{smallmatrix}\right]  \left[\begin{smallmatrix} Q & 0 \\ 0 & -Q \end{smallmatrix} \right] \left[\begin{smallmatrix} \Psi_{i\omega_k} (b) \\ I_{n} \end{smallmatrix}\right] v_a\| \big)=0\}
      \]
      contains only the zero element. 
      \item[(iii)] for all sequences $(\omega_k)_{k \in {\mathbb N}}$ in $\R$, the set
   \[
     \{ v_a \in {\mathbb C}^{n} \mid  \liminf_{k\to\infty}\|{\mathcal Q}(i\omega_k)v_a\|=0 \}
   \]
   contains only the zero element. 
 \end{enumerate}
\end{thm}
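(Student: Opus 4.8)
The plan is to prove (i)$\Leftrightarrow$(iii) by reducing to Theorem~\ref{thm:charexpo} together with Proposition~\ref{prop:unifinv}, and then to prove (ii)$\Leftrightarrow$(iii) by showing that the flux term appearing in (ii) vanishes identically. Write $Q(\omega)\coloneqq \mathcal{Q}(i\omega)=W_B\left[\begin{smallmatrix}\Psi_{i\omega}(b)\\ I_{n}\end{smallmatrix}\right]$. Since $N=1$ we have $\Psi_{i\omega}=\Phi_{\omega}$, so the standing assumption $\sup_{\omega}\|\Psi_{i\omega}\|<\infty$ is precisely \eqref{eq:(B)}; moreover $(Q(\omega))_{\omega\in\R}$ is then bounded and, by the continuity noted before the theorem, continuous. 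Hence Theorem~\ref{thm:charexpo} applies and shows that (i) is equivalent to the conjunction: $Q(\omega)$ is invertible for every $\omega\in\R$ and $\sup_{\omega\in\R}\|Q(\omega)^{-1}\|<\infty$.

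For (i)$\Rightarrow$(iii) I would use that $M\coloneqq\sup_{\omega}\|Q(\omega)^{-1}\|<\infty$ gives $\|v_a\|\le M\|Q(\omega)v_a\|$ for all $\omega$, whence $\liminf_k\|Q(i\omega_k)v_a\|\ge\|v_a\|/M>0$ for every sequence and every $v_a\neq 0$. For (iii)$\Rightarrow$(i) the real work lies in matching (iii) to the hypotheses of Proposition~\ref{prop:unifinv}. Testing (iii) on the constant sequence $\omega_k\equiv\omega_0$ shows each $Q(\omega_0)$ is injective, hence (being square) invertible, which is the standing hypothesis of Proposition~\ref{prop:unifinv}. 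Condition~(ii) of Proposition~\ref{prop:unifinv} also holds, for otherwise some $v\neq 0$ would satisfy $\liminf_{\omega\to\pm\infty}\|Q(\omega)v\|=0$, producing a sequence $(\omega_k)$ that places $v\neq 0$ in the set of (iii). Proposition~\ref{prop:unifinv} then delivers $\sup_{\omega}\|Q(\omega)^{-1}\|<\infty$, and Theorem~\ref{thm:charexpo} yields (i). I expect this bookkeeping — exploiting (iii) once for finite $\omega$ (invertibility) and once for $\omega\to\pm\infty$ (the uniform bound) — to be the main, if modest, obstacle.

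Finally, (ii)$\Leftrightarrow$(iii) should reduce to the observation that the flux term is identically zero. Indeed, the computation in the proof of (iv)$\Leftrightarrow$(v) of Theorem~\ref{thm:main1} shows, for every $\omega\in\R$ and every $v_a\in\CC^{n}$, that the solution $v$ of \eqref{eq:12HZ} with $v(a)=v_a$ satisfies $v(b)^{*}Qv(b)-v(a)^{*}Qv(a)=0$; crucially $\Re(i\omega)=0$ forces this boundary flux to vanish and no boundary condition on $v_a$ enters. Rewriting with $v(b)=\Psi_{i\omega}(b)v_a$ gives
\[
  v_a^{*}\begin{bmatrix}\Psi_{i\omega}(b)^{*} & I_{n}\end{bmatrix}\begin{bmatrix}Q & 0\\ 0 & -Q\end{bmatrix}\begin{bmatrix}\Psi_{i\omega}(b)\\ I_{n}\end{bmatrix}v_a=0\qquad(\omega\in\R,\ v_a\in\CC^{n}).
\]
Thus the second summand in the defining set of (ii) vanishes for every $\omega$, the sets in (ii) and (iii) coincide, and the equivalence is immediate.
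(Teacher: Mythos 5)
Your proof is correct, and it rests on the same three pillars as the paper's own argument --- Theorem \ref{thm:charexpo}, Proposition \ref{prop:unifinv}, and the flux computation from the proof of Theorem \ref{thm:main1}(iv)$\Leftrightarrow$(v) --- but it is organised differently and contains one observation that genuinely streamlines matters. The paper runs the cycle (i)$\Rightarrow$(ii)$\Rightarrow$(iii)$\Rightarrow$(i); its step (i)$\Rightarrow$(ii) requires splitting into bounded and unbounded sequences $(\omega_k)$, treating the bounded case via continuity and accumulation points together with Theorem \ref{thm:main1}, and the unbounded case via Proposition \ref{prop:unifinv}. You instead prove (i)$\Leftrightarrow$(iii) directly --- the forward direction being the one-line estimate $\|v_a\|\leq M\|\mathcal{Q}(i\omega)v_a\|$, the converse coinciding with the paper's (iii)$\Rightarrow$(i), where you additionally spell out the verification of hypothesis (ii) of Proposition \ref{prop:unifinv} that the paper leaves implicit --- and you dispose of (ii)$\Leftrightarrow$(iii) by noting that the quadratic flux term in (ii) vanishes for \emph{every} $\omega\in\R$ and \emph{every} $v_a\in\CC^n$, not merely along sequences on which $\|\mathcal{Q}(i\omega_k)v_a\|\to 0$. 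That observation is sound: the computation in the proof of Theorem \ref{thm:main1}(iv)$\Leftrightarrow$(v) never invokes the boundary condition $\mathcal{Q}(i\omega)v_a=0$, only that $i\omega$ is purely imaginary and $\mathcal{H}$ is Hermitian, so $v(b)^*Qv(b)-v(a)^*Qv(a)=0$ for all solutions of \eqref{eq:12HZ}. Consequently the sets in (ii) and (iii) are literally identical --- a sharper statement than the paper's one-directional (ii)$\Rightarrow$(iii). What your route buys is the elimination of the case analysis and of the appeal to Theorem \ref{thm:main1} inside (i)$\Rightarrow$(ii); it gives up nothing of substance, since condition (ii) is evidently retained in the theorem for its structural parallel with Theorem \ref{thm:main1}(v) rather than because the flux term carries independent information.
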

\begin{proof}
Note that, by Theorem \ref{thm:charexpo}, we see that $(T(t))_{t\geq 0}$ is exponentially stable if and only if for all $\omega\in \R$, $\mathcal{Q}(i\omega)$ is invertible and $\sup_{\omega\in \R}\|\mathcal{Q}(i\omega)^{-1}\|<\infty$. Since $\Psi_{i\cdot} : {\mathbb R}  \mapsto {\mathbb C}^{n \times n}$ is bounded by assumption, it follows from its definition that the function ${\mathcal Q}$ is bounded as well, see Theorem \ref{thm:main1}(iv).
 Next we turn to the proof of the actual theorem. 
\smallskip

(i)$\Rightarrow$(ii). Note that by our observation from the beginning $\mathcal{Q}(i\omega)$ is invertible for all $\omega\in \R$. Hence, by Theorem \ref{thm:main1}, 
\[
\{ v_a \in {\mathbb C}^{n} \mid \|{\mathcal Q}(i\omega)v_a\|+\|v_a^* \left[\begin{smallmatrix} \Psi_{i\omega} (b)^* & I_{n} \end{smallmatrix}\right]  \left[\begin{smallmatrix} Q & 0 \\ 0 & -Q \end{smallmatrix} \right] \left[\begin{smallmatrix} \Psi_{i\omega} (b) \\ I_{n} \end{smallmatrix}\right] v_a\| =0\}=\{0\}
\]for all $\omega\in \R$. 

Now, let  $(\omega_k)_{k \in {\mathbb N}}$ be a bounded sequence in $\R$ and $v_a\in \R^n$. Then
\[
 \liminf_{k\to\infty}\big(\|{\mathcal Q}(i\omega_k)v_a\|+\|v_a^* \left[\begin{smallmatrix} \Psi_{i\omega_k} (b)^* & I_{n} \end{smallmatrix}\right]  \left[\begin{smallmatrix} Q & 0 \\ 0 & -Q \end{smallmatrix} \right] \left[\begin{smallmatrix} \Psi_{i\omega_k} (b) \\ I_{n} \end{smallmatrix}\right] v_a\| \big)=0
\]
is equivalent to
\[
\|{\mathcal Q}(i\omega)v_a\|+\|v_a^* \left[\begin{smallmatrix} \Psi_{i\omega} (b)^* & I_{n} \end{smallmatrix}\right]  \left[\begin{smallmatrix} Q & 0 \\ 0 & -Q \end{smallmatrix} \right] \left[\begin{smallmatrix} \Psi_{i\omega} (b) \\ I_{n} \end{smallmatrix}\right] v_a\| =0
\]
for one accumulation value $\omega$ of $(\omega_k)_{k \in {\mathbb N}}$. Since the latter implies $v_a=0$, we obtain condition (ii) for all bounded sequences. If $(\omega_k)_{k \in {\mathbb N}}$ is unbounded, we may assume without loss of generality that $(\omega_k)_{k \in {\mathbb N}}\to\infty$ (any bounded subsequence has been dealt with already, and the case  $(\omega_k)_{k \in {\mathbb N}}\to-\infty$ is similar). Since by exponential stability, we get $\sup_{\omega\in \R}\|\mathcal{Q}(i\omega)^{-1}\|<\infty$, Proposition  \ref{prop:unifinv} yields for every non-zero $v_a$ that $\liminf_{k\to\infty} \|\mathcal{Q}(i\omega)v_a\|>0$ and so condition (ii) holds for all sequences $(\omega_k)_{k \in {\mathbb N}}$.

(ii)$\Rightarrow$(iii). Let $v_a$ be such that $\liminf_{k\to\infty} \|\mathcal{Q}(i\omega)v_a\|=0$ for some sequence $(\omega_k)_{k \in {\mathbb N}}$. Then by the reformulation as in the proof of Theorem \ref{thm:main1} (iv)$\Leftrightarrow$(v), we deduce that 
\[
  \liminf_{k\to\infty}\big(\|{\mathcal Q}(i\omega_k)v_a\|+\|v_a^* \left[\begin{smallmatrix} \Psi_{i\omega_k} (b)^* & I_{n} \end{smallmatrix}\right]  \left[\begin{smallmatrix} Q & 0 \\ 0 & -Q \end{smallmatrix} \right] \left[\begin{smallmatrix} \Psi_{i\omega_k} (b) \\ I_{n} \end{smallmatrix}\right] v_a\| =0.
\]
For this particularly note that the reformulation in the proof of Theorem \ref{thm:main1} (iv)$\Leftrightarrow$(v) yields that given a sequence $(\omega_k)_{k\in \N}$ with $\lim_{k\to\infty}\big(\|{\mathcal Q}(i\omega_k)v_a\|$ we obtain $\lim_{k\to\infty}\|v_a^* \left[\begin{smallmatrix} \Psi_{i\omega_k} (b)^* & I_{n} \end{smallmatrix}\right]  \left[\begin{smallmatrix} Q & 0 \\ 0 & -Q \end{smallmatrix} \right] \left[\begin{smallmatrix} \Psi_{i\omega_k} (b) \\ I_{n} \end{smallmatrix}\right] v_a\|=0$ for the \emph{same} sequence.
Hence, $v_a=0$, by (ii).

(iii)$\Rightarrow$(i).  Considering constant sequences, we deduce that (ii) implies Theorem \ref{thm:main1} (iv). Thus, $\mathcal{Q}(i\omega)$ is invertible for all $\omega\in \R$. Finally, by Proposition \ref{prop:unifinv}, we deduce that $\sup_{\omega\in \R}\|\mathcal{Q}(i\omega)^{-1}\|<\infty$.
\end{proof}

\section{Example, network of vibrating strings}
\label{Sec:3}

In this section we study in detail the stability of a (small) network of vibrating strings. The network consists  of three interconnected (undamped) vibrating strings, which are connected to a damper, see also Figure \ref{fig:1}. 

  \begin{figure}[htb]
    \centering
        \includegraphics[scale=0.8]{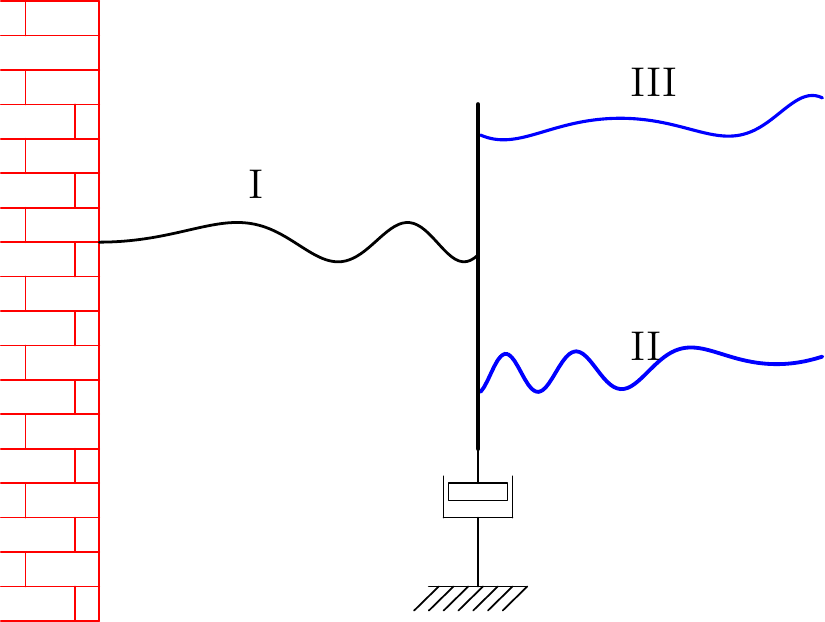}
    \caption{Coupled vibrating strings with damping}
    \label{fig:1}
  \end{figure}

The model of every string is described using the state variables $\rho_k \frac{\partial w_k}{\partial t}$ (the momentum) and $\frac{\partial w_k}{\partial \zeta}$ (the strain) and is given by
\begin{align}
  \frac{\partial x_k}{\partial t}(\zeta,t) = \frac{\partial }{\partial t} \begin{bmatrix} \rho_k \frac{\partial w_k}{\partial t} \\  \frac{\partial w_k}{\partial \zeta} \end{bmatrix} (\zeta,t) &= \begin{bmatrix} 0 & 1 \\ 1 & 0 \end{bmatrix} \frac{\partial}{\partial \zeta} \left(\begin{bmatrix} \frac{1}{\rho_k(\zeta)} & 0 \\ 0 & T_k(\zeta) \end{bmatrix}x_k(\zeta,t)\right)\\
   &= P_{1,k} ({\mathcal H}_kx_k)(\zeta,t), \qquad k\in \{\one,\two,\three\}.
\end{align}
In string $\one$, we let the $\zeta$ run from left to right, but in strings $\two$ and $\three$, we let it go in the opposite direction. We have the following six boundary conditions, see also Figure \ref{fig:1},
\begin{align}
 \label{eq:17HZ}
  \frac{\partial w_{\one}}{\partial t}(a,t) =&\ 0\\
  \label{eq:18HZ}
  T_{\two}(a)\frac{\partial w_{\two}}{\partial \zeta}(a,t) =&\ T_{\three}(a)\frac{\partial w_{\three}}{\partial \zeta}(a,t) =0\\  \label{eq:19HZ}
  \frac{\partial w_{\one}}{\partial t}(b,t) =&\ \frac{\partial w_{\two}}{\partial t}(b,t) = \frac{\partial w_{\three}}{\partial t}(b,t)\\  \label{eq:20HZ}
  -\beta \frac{\partial w_{\one}}{\partial t}(b,t) =&\ T_{\one}(b)\frac{\partial w_{\one}}{\partial \zeta}(b,t) + T_{\two}(b)\frac{\partial w_{\two}}{\partial \zeta}(b,t) + T_{\three}(b)\frac{\partial w_{\three}}{\partial \zeta}(b,t),
\end{align}
where $\beta>0$ denotes the damping coefficient of the damper. 

Formulating the system as (\ref{eq:1HZ}) is done by putting the 3 states into one 6-dimensional vector. For this system the $P_1$ and ${\mathcal H}$ are given as the block-diagonal matrices with the three $P_{1,k}$'s and ${\mathcal H}_{k}$'s on the diagonal, respectively, that is,
\begin{equation}
\label{eq:P1H}
   P_1 = \begin{bmatrix}P_{1,\one} & & \\ & P_{1,\two} & \\ & & P_{1,\three} \end{bmatrix}\text{ and } \mathcal{H} = \begin{bmatrix}\mathcal{H}_{\one} & & \\ & \mathcal{H}_{\two} & \\ & & \mathcal{H}_{\three} \end{bmatrix}.
\end{equation}
We investigate the asymptotic stability of the above model by employing part (v) of Theorem \ref{thm:main1}. For this we need the solution of the differential equation (\ref{eq:12HZ})--(\ref{eq:12aHZ}). Using that $N=1$ and the special form of ${\mathcal H}$ and $P_1$, this can be written as 3 differential equations on $[a,b]$ as
\begin{equation}
\label{eq:21HZ}
  \frac{dv_k}{d\zeta}(\zeta) = i\omega \begin{bmatrix} 0 & T_k(\zeta)^{-1} \\ \rho_k(\zeta) & 0 \end{bmatrix} v_k(\zeta),\mbox { with } v_k(a) = v_{a,k}, \quad k\in \{{\one},\two,\three\}.
\end{equation}
The boundary conditions of (\ref{eq:17HZ}) and (\ref{eq:18HZ}) give that
\begin{equation}
\label{eq:22HZ}
  v_{a,{\one}} = \begin{bmatrix} 0 \\ T_{\one}(a) \frac{\partial w_{\one}}{\partial \zeta}(a) \end{bmatrix}:=\begin{bmatrix} 0 \\ F_{\one}(a) \end{bmatrix}, \quad  v_{a,k} = \begin{bmatrix}  \frac{\partial w_k}{\partial t}(a) \\ 0 \end{bmatrix}:=\begin{bmatrix} \nu_{a,k} \\ 0 \end{bmatrix},\quad k \in  \{\two,\three\}.
\end{equation}
The other three boundary conditions imply that
\begin{equation}
\label{eq:23HZ}
  v_{k}(b) = \begin{bmatrix} \frac{\partial w_k}{\partial t}(b) \\ T_k(b) \frac{\partial w_k}{\partial \zeta}(b) \end{bmatrix}:=\begin{bmatrix} \nu_b \\ F_k(b) \end{bmatrix},\quad  k \in  \{{\one},\two,\three\}.
\end{equation}
with $-\beta \nu_b = F_{\one}(b) + F_{\two}(b) + F_{\three}(b)$.

Having provided the reformulation of  (\ref{eq:12HZ})--(\ref{eq:12aHZ}), for convenience of the reader, we provide the consequence of our main theorem in this particular situation. For this, we quickly check the generation property of the port-Hamiltonian operator.
\begin{thm}
\label{thm:genexampl} 
For $k\in \{\one,\two,\three\}$, let $T_k,\rho_k \in L_\infty(a,b)$ and assume there exists $\varepsilon_0>0$ such that for a.e.~$\zeta\in (a,b)$, $T_k(\zeta),\rho_k(\zeta)\geq \varepsilon_0$. With \eqref{eq:P1H} in $L_{2,\mathcal{H}}((a,b); {\mathbb C}^6)$ consider the operator
\[
   A = P_1\frac{d}{d\zeta}\mathcal{H}
\]
with domain
\[
\dom(A) = \{ x\in L_{2,\mathcal{H}}((a,b); {\mathbb C}^6) \mid  \mathcal{H}x \in H^1((a,b);{\mathbb C}^6), W_B \begin{pmatrix} (\mathcal{H}x)(b) \\ (\mathcal{H}x)(a) \end{pmatrix} = 0\},
\]where $W_B$ is given by  \eqref{eq:17HZ}--\eqref{eq:20HZ}, that is,
\[
   W_B = \left(\begin{array}{ccccccccccccc} 1 & 0 & -1 & 0 & 0 & 0 &\vline&  0 & 0 & 0 & 0 & 0 & 0 \\ 
                                        1 & 0 & 0 & 0 & -1 & 0 &\vline& 0 & 0 & 0 & 0 & 0 & 0 \\ 
                                        \beta & 1 & 0 & 1 & 0 & 1 &\vline& 0 & 0 & 0 & 0 & 0 & 0 \\ \hline 
                                        0 & 0 & 0 & 0 & 0 & 0 &\vline& 1 & 0 & 0 & 0 & 0 & 0 \\ 
                                        0 & 0 & 0 & 0 & 0 & 0 &\vline& 0 & 0 & 0 & 1 & 0 & 0 \\ 
                                        0 & 0 & 0 & 0 & 0 & 0 &\vline& 0 & 0 & 0 & 0 & 0 & 1 \\ 
                                        \end{array}\right).
\] 
Then $(A,\dom(A))$ is the generator of contraction semigroup.
\end{thm}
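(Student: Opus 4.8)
The plan is to verify condition (iii) of Theorem \ref{thm:gencon}, which for this first-order ($N=1$) system is simply the statement that the energy balance is dissipative; this route is the most transparent because the boundary conditions \eqref{eq:17HZ}--\eqref{eq:20HZ} have a direct velocity/force interpretation. Before invoking Theorem \ref{thm:gencon} I would confirm its structural hypotheses in the present setting: each block $P_{1,k}=\left[\begin{smallmatrix}0&1\\1&0\end{smallmatrix}\right]$ is Hermitian (so $P_1^*=P_1$, consistent with $P_k^*=(-1)^{k+1}P_k$ at $k=1$) and invertible, whence $P_1=P_N$ is invertible; $\mathcal H=\diag(\mathcal H_{\one},\mathcal H_{\two},\mathcal H_{\three})$ is self-adjoint, and the bounds $\varepsilon_0\leq T_k,\rho_k$ together with $T_k,\rho_k\in L_\infty$ give both uniform boundedness and coercivity $\mathcal H\geq mI$ for a suitable $m>0$; finally the displayed $W_B$ has full row rank $6$, since its $a$-block (rows $4$--$6$) consists of three unit rows with pivots in distinct columns while its $b$-block (rows $1$--$3$) is manifestly of rank $3$.

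The core of the argument is a one-line integration by parts. Writing $v_1\coloneqq\mathcal Hx\in H^1$, for $x\in\dom(A)$ one has $Ax=P_1 v_1'$, and since $\mathcal H^*=\mathcal H$ we get $\langle Ax,x\rangle_{\mathcal H}=\tfrac12\int_a^b v_1^* P_1 v_1'\d\zeta$ and $\langle x,Ax\rangle_{\mathcal H}=\tfrac12\int_a^b (v_1')^* P_1 v_1\d\zeta$. Adding these and using $P_1^*=P_1$, the integrand becomes the total derivative $\tfrac{d}{d\zeta}(v_1^* P_1 v_1)$, so that
\[
\langle Ax,x\rangle_{\mathcal H}+\langle x,Ax\rangle_{\mathcal H}=\tfrac12\big(v_1(b)^* P_1 v_1(b)-v_1(a)^* P_1 v_1(a)\big).
\]
This is exactly the $N=1$ instance of the Green-type identity \cite[Lemma 3.1]{GoZM05} already used in the proof of Theorem \ref{thm:main1}.

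It then remains to evaluate the boundary quadratic form. For each string the two components of $v_{1,k}=\mathcal H_k x_k$ are the velocity $\nu_k=\partial_t w_k$ and the force $F_k=T_k\partial_\zeta w_k$, and the off-diagonal $P_{1,k}$ yields $v_{1,k}^* P_{1,k} v_{1,k}=2\Re(\overline{\nu_k}F_k)$. At $\zeta=a$ the conditions \eqref{eq:17HZ}--\eqref{eq:18HZ} force $\nu_{\one}(a)=0$ and $F_{\two}(a)=F_{\three}(a)=0$, so every product $\overline{\nu_k(a)}F_k(a)$ vanishes and the $a$-contribution is zero. At $\zeta=b$ the velocity continuity \eqref{eq:19HZ} gives a common value $\nu_b\coloneqq\nu_{\one}(b)=\nu_{\two}(b)=\nu_{\three}(b)$, and the force balance \eqref{eq:20HZ} reads $\sum_k F_k(b)=-\beta\nu_b$, whence $\sum_k\Re(\overline{\nu_b}F_k(b))=\Re(\overline{\nu_b}(-\beta\nu_b))=-\beta|\nu_b|^2$. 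Thus $\langle Ax,x\rangle_{\mathcal H}+\langle x,Ax\rangle_{\mathcal H}=-\beta|\nu_b|^2\leq 0$, which is condition (iii), and Theorem \ref{thm:gencon} delivers the generation property. The only actual labour is the bookkeeping that rewrites the six boundary rows in terms of the velocity/force variables; since the analytic content reduces to the integration by parts above, I expect no genuine obstacle.
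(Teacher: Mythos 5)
Your proof is correct, but it takes a genuinely different route from the paper's. The paper verifies condition (ii) of Theorem \ref{thm:gencon}: it computes $R_{ext}^{-1}\left[\begin{smallmatrix}0&I_6\\I_6&0\end{smallmatrix}\right]R_{ext}^{-*}=\left[\begin{smallmatrix}P_1&0\\0&-P_1\end{smallmatrix}\right]$ and then carries out the full $6\times 12$ times $12\times 12$ times $12\times 6$ matrix multiplication $W_B\left[\begin{smallmatrix}P_1&0\\0&-P_1\end{smallmatrix}\right]W_B^*$, arriving at $\operatorname{diag}(0,0,2\beta,0,0,0)\geq 0$. You instead verify the dissipativity condition (iii) directly: the Green-type identity $\langle Ax,x\rangle_{\mathcal H}+\langle x,Ax\rangle_{\mathcal H}=\tfrac12\bigl(v_1(b)^*P_1v_1(b)-v_1(a)^*P_1v_1(a)\bigr)$ followed by the velocity/force bookkeeping at the two endpoints, yielding $-\beta|\nu_b|^2\leq 0$. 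The authors explicitly acknowledge your route in the closing sentence of their proof (``Note that we could also have shown this by checking the ineqality in part (iii)''), so there is no question of validity. What each approach buys: the paper's matrix computation is purely mechanical and reusable as a template for arbitrary $W_B$ without any physical interpretation, and it produces the boundary dissipation matrix itself (locating the damping in the single entry $2\beta$); your computation is shorter, physically transparent, and directly exhibits the dissipation rate $-\beta|\nu_b|^2$ --- which is, in effect, the same identity the paper later re-derives inside the proof of Theorem \ref{thm:mainex} when reformulating $v(b)^*P_1v(b)-v(a)^*P_1v(a)$. Your preliminary checks (full rank of $W_B$, Hermiticity and invertibility of $P_1$, coercivity and boundedness of $\mathcal H$ from $\varepsilon_0\leq T_k,\rho_k$ and $T_k,\rho_k\in L_\infty$) match what the paper needs and are handled correctly.
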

\begin{proof}
 We apply Theorem \ref{thm:gencon}. For this, it is easy to see that $W_B$ has full rank. Hence, using $Q=P_1$ and $R_{ext} = \frac{1}{\sqrt{2}} \begin{bmatrix} P_1 & - P_1 \\ I_{6} & I_{6} \end{bmatrix}$, we need to establish inequality \eqref{eq:WB}. For this, using $P_1^2 = I_6$ and
 \[
     R_{ext}^{-1} = \frac{1}{\sqrt{2}} \begin{bmatrix} P_1 &I_6\\  - P_1  & I_{6} \end{bmatrix},
 \] we compute
 \[
      R_{ext}^{-1} \begin{bmatrix} 0 & I_6 \\ I_6 & 0 \end{bmatrix} R_{ext}^{-*} = \begin{bmatrix} P_1 & 0 \\ 0 & -P_1\end{bmatrix},\]
thus \begin{align*}
& W_BR_{ext}^{-1} \begin{bmatrix} 0 & I_6 \\ I_6 & 0 \end{bmatrix} (W_BR_{ext}^{-1})^*  = W_B \begin{bmatrix} P_1 & 0 \\ 0 & -P_1\end{bmatrix} W_B^* \\
 & = \begin{bmatrix} \begin{bmatrix} 
 \begin{bmatrix} 1 & 0 \\ 1 & 0 \end{bmatrix} &  \begin{bmatrix}  -1 & 0 \\ 0 & 0 \end{bmatrix} &  \begin{bmatrix} 0 & 0 \\ -1 & 0  \end{bmatrix}   \\
   \begin{bmatrix}\beta & 1 \\ 0 & 0 \end{bmatrix} &  \begin{bmatrix} 0 & 1 \\ 0 & 0  \end{bmatrix} &  \begin{bmatrix}  0 & 1 \\ 0 & 0\end{bmatrix}  
\\
   \begin{bmatrix} 0 & 0 \\ 0 & 0 \end{bmatrix} &     \begin{bmatrix} 0 & 0 \\ 0 & 0 \end{bmatrix} &    \begin{bmatrix} 0 & 0 \\ 0 & 0 \end{bmatrix}  \end{bmatrix} & 
   \begin{bmatrix} 
 \begin{bmatrix} 0 & 0 \\ 0 & 0 \end{bmatrix} &  \begin{bmatrix}  0 & 0 \\ 0 & 0 \end{bmatrix} &  \begin{bmatrix} 0 & 0 \\ 0 & 0  \end{bmatrix}   \\
   \begin{bmatrix}0 & 0 \\ 1 & 0 \end{bmatrix} &  \begin{bmatrix} 0 & 0\\ 0 & 0  \end{bmatrix} &  \begin{bmatrix}  0 & 0 \\ 0 & 0\end{bmatrix}  
\\
   \begin{bmatrix} 0 & 0 \\ 0 & 0 \end{bmatrix} &     \begin{bmatrix} 0 & 1 \\ 0 & 0 \end{bmatrix} &    \begin{bmatrix} 0 & 0 \\ 0 & 1 \end{bmatrix}  \end{bmatrix}\end{bmatrix} \times \\
   & \quad \times\begin{bmatrix} P_1 & 0 \\ 0 & -P_1\end{bmatrix} W_B^*  \\
   &= \begin{bmatrix} 
 \begin{bmatrix} 1 & 0 \\ 1 & 0 \end{bmatrix} &  \begin{bmatrix}  -1 & 0 \\ 0 & 0 \end{bmatrix} &  \begin{bmatrix} 0 & 0 \\ -1 & 0  \end{bmatrix}   \\
   \begin{bmatrix}\beta & 1 \\ 0 & 0 \end{bmatrix} &  \begin{bmatrix} 0 & 1 \\ 0 & 0  \end{bmatrix} &  \begin{bmatrix}  0 & 1 \\ 0 & 0\end{bmatrix}  
\\
 0 &   0&  0\end{bmatrix} \begin{bmatrix} P_{1,\one}  & & \\ & P_{1,\two} & \\ & & P_{1,\three}\end{bmatrix} \begin{bmatrix} 
 \begin{bmatrix} 1 & 1 \\ 0 & 0 \end{bmatrix} &  \begin{bmatrix}  \beta & 0 \\ 1 & 0 \end{bmatrix} &  0  \\
   \begin{bmatrix}-1 & 0 \\ 0 & 0 \end{bmatrix} &  \begin{bmatrix} 0 & 0 \\ 1 & 0  \end{bmatrix} &  0\\
   \begin{bmatrix} 0 & -1 \\ 0 & 0 \end{bmatrix} &     \begin{bmatrix}  0& 0 \\ 1 & 0 \end{bmatrix} &   0  \end{bmatrix} \\
   & - \begin{bmatrix} 
0 &  0 &  0  \\
   \begin{bmatrix}0 & 0 \\ 1 & 0 \end{bmatrix} &  0 &  0
\\
0 &     \begin{bmatrix} 0 & 1 \\ 0 & 0 \end{bmatrix} &    \begin{bmatrix} 1 & 0 \\ 0 & 0 \end{bmatrix}  \end{bmatrix} \begin{bmatrix} P_{1,\one}  & & \\ & P_{1,\two} & \\ & & P_{1,\three}\end{bmatrix}\begin{bmatrix} 
 0 &  \begin{bmatrix}  0 & 1 \\ 0 & 0 \end{bmatrix} &  0 \\
  0 & 0 &  \begin{bmatrix}  0 & 0 \\ 1 & 0\end{bmatrix}  
\\
0 &   0 &    \begin{bmatrix} 1 & 0 \\ 0 & 0 \end{bmatrix}  \end{bmatrix} \\
& =  \begin{bmatrix}
0 & 0 & 0 & 0 & 0 & 0 \\
0 & 0 & 0 & 0 & 0 & 0 \\
0 & 0 & 2\beta & 0 & 0 & 0 \\
0 & 0 & 0 & 0 & 0 & 0 \\
0 & 0 & 0 & 0 & 0 & 0 \\
0 & 0 & 0 & 0 & 0 & 0 
\end{bmatrix},
 \end{align*}  
which is non-negative definite as $\beta\geq 0$. Note that we could also have shown this by checking the ineqality in part (iii) of Theorem \ref{thm:gencon}.
\end{proof}

Thus, we are in the position to apply Theorem \ref{thm:main1}. For this, let $\Psi_\lambda \colon [a,b)\to \CC^{6\times 6}$ with $
\Psi_\lambda(a)=I_6$ be the fundamental solution of
\begin{equation}\label{eq:exampleode}
   \frac{d v}{ d\zeta}(\zeta) = \lambda P_1^{-1}\mathcal{H}(\zeta)^{-1}v(\zeta)\quad \text{for }\zeta\in (a,b)\text{ and }\lambda\in \CC.
\end{equation}  
As a result, we deduce the following characterisation of stability.
\begin{thm}
\label{thm:mainex} 
Consider $(A,\dom(A))$ as provided in Theorem \ref{thm:genexampl} and let $(T(t))_{t\geq0}$ be the generated contraction semigroup. Then the following conditions are equivalent:
\begin{enumerate}[(i)]
\item $(T(t))_{t\geq 0}$ is semi-uniformly stable.
\item For all $\omega \in {\mathbb R}$ the set
\[
  \{ v_a \in {\mathbb C}^{6} \mid{W_B\begin{bmatrix} \Psi_{i\omega} (b) \\ I_{6}\end{bmatrix}}v_a=0 \mbox{ and } v_a^* \left[\begin{smallmatrix} \Psi_{i\omega} (b)^* & I_{6} \end{smallmatrix}\right]  \left[\begin{smallmatrix} P_1 & 0 \\ 0 & -P_1 \end{smallmatrix} \right] \left[\begin{smallmatrix} \Psi_{i\omega} (b) \\ I_{6} \end{smallmatrix}\right] v_a =0\}
\]
contains only the zero element. 
\item For all $\omega\in \R$ the following holds: Let $v\colon [a,b]\to \CC^6 = (\CC^2)^3$ be a solution of \eqref{eq:exampleode} with $\lambda=i\omega$. Denote $v(a)= \big(\begin{bmatrix} \nu_{a,k} \\ F_k(a) \end{bmatrix}\big)_{k\in \{\one,\two,\three\}}$ and similarly for $v(b)$. If
\begin{align*}
 &   \nu_{a,\one} =\nu_{b,\one} = \nu_{b,\two}=\nu_{b,\three}=0,   F_{\two}(a)=F_{\three}(a) = 0, F_{\one}(b)+F_{\two}(b)+F_{\three}(b) =0, 
\end{align*}
then $v=0$.
\end{enumerate}
\end{thm}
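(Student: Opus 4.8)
The plan is to deduce both equivalences from Theorem \ref{thm:main1}, specialised to $N=1$ and $n=6$, so that $nN=6$ and $Q=P_1$ (for $N=1$ the matrix of \eqref{eq:Q} reduces to $Q_{11}=P_1$, and $P_0=0$ here so that $\mathcal{P}_{i\omega}(\zeta)=i\omega P_1^{-1}\mathcal{H}(\zeta)^{-1}$ and $\Psi_{i\omega}$ is exactly the fundamental solution of \eqref{eq:exampleode}). With these identifications the equivalence (i)$\Leftrightarrow$(ii) requires nothing beyond unwinding definitions: it is verbatim the statement of Theorem \ref{thm:main1}(i)$\Leftrightarrow$(v). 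The genuine content is (ii)$\Leftrightarrow$(iii), where the abstract matrix condition must be translated into the concrete boundary data of the network.

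For (ii)$\Leftrightarrow$(iii) I would show that, for each fixed $\omega$, the two solution sets whose triviality is asserted coincide. Write $v(\zeta)=\Psi_{i\omega}(\zeta)v_a$ and record its boundary data as $v(a)=\big(\begin{smallmatrix}\nu_{a,k}\\ F_k(a)\end{smallmatrix}\big)_k$ and $v(b)=\big(\begin{smallmatrix}\nu_{b,k}\\ F_k(b)\end{smallmatrix}\big)_k$. First, exactly as in the proof of Lemma \ref{L:eig}, the condition $\mathcal{Q}(i\omega)v_a=W_B\left[\begin{smallmatrix}\Psi_{i\omega}(b)\\ I_6\end{smallmatrix}\right]v_a=0$ is equivalent to $W_B\left[\begin{smallmatrix}v(b)\\ v(a)\end{smallmatrix}\right]=0$, i.e.\ to the five scalar boundary conditions that $W_B$ encodes through \eqref{eq:17HZ}--\eqref{eq:20HZ}: namely $\nu_{a,\one}=0$, $F_{\two}(a)=F_{\three}(a)=0$, $\nu_{b,\one}=\nu_{b,\two}=\nu_{b,\three}=:\nu_b$, and $-\beta\nu_b=F_{\one}(b)+F_{\two}(b)+F_{\three}(b)$.

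Next I would evaluate the quadratic form in the second condition. Using $P_1=\diag(P_{1,\one},P_{1,\two},P_{1,\three})$ with $P_{1,k}=\left[\begin{smallmatrix}0&1\\1&0\end{smallmatrix}\right]$, one finds for $v=\big(\begin{smallmatrix}\nu_k\\ F_k\end{smallmatrix}\big)_k$ that $v^*P_1v=2\sum_k\Re(\overline{\nu_k}F_k)$, so the reformulation (iv)$\Leftrightarrow$(v) of Theorem \ref{thm:main1} gives the second equality as $v(b)^*P_1v(b)-v(a)^*P_1v(a)=0$. Substituting the boundary conditions, the contribution at $a$ vanishes term by term (since $\nu_{a,\one}=0$ and $F_{\two}(a)=F_{\three}(a)=0$), while the contribution at $b$ collapses, via $\nu_{b,k}=\nu_b$ and the damper relation, to $\Re\big(\overline{\nu_b}(F_{\one}(b)+F_{\two}(b)+F_{\three}(b))\big)=-\beta|\nu_b|^2$. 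Thus the second equality is equivalent to $\beta|\nu_b|^2=0$, and since $\beta>0$ this forces $\nu_b=0$; feeding $\nu_b=0$ back into the damper relation turns it into $F_{\one}(b)+F_{\two}(b)+F_{\three}(b)=0$. Hence the set in (ii) is exactly the set of solutions satisfying $\nu_{a,\one}=\nu_{b,\one}=\nu_{b,\two}=\nu_{b,\three}=0$, $F_{\two}(a)=F_{\three}(a)=0$ and $F_{\one}(b)+F_{\two}(b)+F_{\three}(b)=0$ — precisely the hypotheses of (iii). Conversely, any solution meeting the conditions in (iii) satisfies the $W_B$-conditions and makes both sides of the energy identity vanish, hence lies in the set of (ii). The two sets coincide for every $\omega$, whence (ii)$\Leftrightarrow$(iii).

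The calculations are routine once organised; the only care needed is the bookkeeping of the quadratic form and the systematic elimination of terms via the five boundary conditions. The conceptual crux — and the step I expect to be least mechanical — is recognising that the abstract "energy-flux vanishes" condition of Theorem \ref{thm:main1}(v) is, for this network, exactly the statement that the power $\beta|\nu_b|^2$ dissipated by the damper vanishes; this is what pins down $\nu_b=0$ and lets the two uniqueness statements be identified.
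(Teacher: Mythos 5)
Your proposal is correct and follows essentially the same route as the paper: reduce to (ii)$\Leftrightarrow$(iii) via Theorem \ref{thm:main1}, rewrite the two matrix conditions as $W_B\left[\begin{smallmatrix}v(b)\\ v(a)\end{smallmatrix}\right]=0$ together with the energy identity $v(b)^*P_1v(b)-v(a)^*P_1v(a)=0$, and use the damper relation with $\beta>0$ to force $\nu_b=0$. The identification of the quadratic form with the dissipated power $\beta|\nu_b|^2$ is exactly the paper's argument.
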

\begin{proof}
By Theorem \ref{thm:main1} it suffices to show that the latter two items are equivalent. For this note that any solution of \eqref{eq:exampleode} for $\lambda=i\omega$ can be represented by $v(\cdot)= \Psi_{i\omega}(\cdot)v_a$ for some $v_a\in \CC^6$ (and vice versa). In this case, $v(a)=v_a$ and $v(b)= \Psi_{i\omega}(b)v_a$. Hence, it follows that the equations in the set of condition (ii), can be equivalently expressed as
\begin{equation}\label{eq:proofeq2}
    W_B \begin{bmatrix} v(b) \\ v(a) \end{bmatrix} = 0\text{ and } v(b)^*P_1v(b) - v(a)^*P_1v(a) =0.
\end{equation}
To obtain the claim, it thus suffices to show that the equalities in \eqref{eq:proofeq2} are equivalent to the ones claimed in condition (iii). For this note that the conditions encoded in $W_B \left[\begin{smallmatrix} v(b) \\ v(a) \end{smallmatrix} \right]= 0$ read
\begin{align*}
   \nu_{a,I} =&\ 0,\quad  F_{\two}(a)=F_{\three}(a) = 0 \\
   \nu_b\coloneqq&\ \nu_{b,\one} = \nu_{b,\two}=\nu_{b,\three}\\
 -\beta \nu_b=&\  F_{\one}(b)+F_{\two}(b)+F_{\three}(b).
 \end{align*}
 Next, using the form of $P_1$ and $v$ and the boundary conditions, we reformulate  $v(b)^*P_1v(b) - v(a)^*P_1v(a) =0$:
 \begin{align*}
v(b)^*P_1v(b)& - v(a)^*P_1v(a) \\ =&\ 2\sum_{k\in \{\one,\two,\three\}} \big( \begin{bmatrix} \nu_{b,k} \\ F_k(b) \end{bmatrix}^*\begin{bmatrix} 0 & 1\\ 1 & 0\end{bmatrix}\begin{bmatrix} \nu_{b,k} \\ F_k(b) \end{bmatrix}-\begin{bmatrix} \nu_{a,k} \\ F_k(a) \end{bmatrix}^*\begin{bmatrix} 0 & 1\\ 1 & 0\end{bmatrix}\begin{bmatrix} \nu_{a,k} \\ F_k(a) \end{bmatrix}  \big) \\
 =&\  2\sum_{k\in \{\one,\two,\three\}} \Re \nu_{b,k}^*F_{k}(b) - \Re \nu_{a,k}^*F_{k}(a) = \sum_{k\in \{\one,\two,\three\}} \Re \nu_{b,k}^*F_{k}(b) \\
 =&\ 2\Re\nu_b^*( F_{\one}(b)+F_{\two}(b)+F_{\three}(b)).
 \end{align*}
 Hence, $v(b)^*P_1v(b) - v(a)^*P_1v(a) =0$ is the same as saying
 \[
  \Re\nu_b^* (F_{\one}(b) + F_{\two}(b) + F_{\three}(b)) =0.
 \]
 Combining this with the equality $-\beta \nu_b= F_{\one}(b)+F_{\two}(b)+F_{\three}(b)$ we see that this
 is equivalent to
 \[
 \nu_b =0 \text{ and } F_{\one}(b)+F_{\two}(b)+F_{\three}(b)=0.
 \]
 Indeed, sufficiency of the latter system for the former being obvious; the necessity of the latter follows upon substituting the first equation of the former into its second equation to obtain $-\beta\Re\nu_b^*\nu_b=0$ leading to $\nu_b=0$ as $\beta>0$ and, hence, $F_{\one}(b)+F_{\two}(b)+F_{\three}(b)=0$ by the first equation.
\end{proof}

\begin{rem}\label{rem:eig} In the situation of Theorem \ref{thm:mainex} assume that there exists $\omega\in \R$ such that in condition (iii), we find $v\neq 0$ satisfying the equations for the boundary values of $v$. Then $i\omega$ is an eigenvalue of $A$. Indeed, this follows from Lemma \ref{L:eig} together with the equivalence of (iv) and (v) in Theorem \ref{thm:main1} and the respective proof.
\end{rem}

Next, we provide necessary conditions for the validity of condition (iii) in Theorem \ref{thm:mainex}. In any case, challenges only arise for $\omega\neq 0$, as the next result confirms.
\begin{lem}
\label{lem:omegazero} 
Let $v\colon [a,b]\to \CC^6 = (\CC^2)^3$ be a solution of \eqref{eq:exampleode} with $\lambda=0$. Denote $v(a)= \big(\begin{bmatrix} \nu_{a,k} \\ F_k(a) \end{bmatrix}\big)_{k\in \{\one,\two,\three\}}$ and similarly for $v(b)$. If
\begin{align*}
 &   \nu_{a,I} =\nu_{b,\one} = \nu_{b,\two}=\nu_{b,\three}=0,   F_{\two}(a)=F_{\three}(a) = 0, F_{\one}(b)+F_{\two}(b)+F_{\three}(b) =0, 
\end{align*}
then $v=0$.
\end{lem}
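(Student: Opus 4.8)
The plan is to exploit the fact that the parameter value $\lambda=0$ trivialises the differential equation \eqref{eq:exampleode}. Indeed, upon setting $\lambda=0$ the right-hand side $\lambda P_1^{-1}\mathcal{H}(\zeta)^{-1}v(\zeta)$ vanishes identically, so the equation reduces to $\frac{dv}{d\zeta}(\zeta)=0$ on $(a,b)$. Hence any solution $v$ is constant on $[a,b]$; in particular $v(a)=v(b)$, which componentwise reads $\nu_{a,k}=\nu_{b,k}$ and $F_k(a)=F_k(b)$ for each $k\in\{\one,\two,\three\}$. I would record this observation first, since it is the only structural input needed.

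With this constancy in hand, I would simply feed the hypotheses into the equalities $v(a)=v(b)$ and read off that every scalar entry vanishes. For the velocity (first) components the assumptions give $\nu_{b,\one}=\nu_{b,\two}=\nu_{b,\three}=0$, and constancy forces $\nu_{a,k}=\nu_{b,k}=0$ for all three strings, so all $\nu$-entries are zero. For the force (second) components, from $F_{\two}(a)=F_{\three}(a)=0$ and constancy we get $F_{\two}(b)=F_{\three}(b)=0$; substituting these into the remaining interface condition $F_{\one}(b)+F_{\two}(b)+F_{\three}(b)=0$ yields $F_{\one}(b)=0$, and constancy then gives $F_{\one}(a)=0$. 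Thus every component of the constant vector $v$ vanishes, i.e.\ $v=0$.

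I do not anticipate a genuine obstacle here: the point of isolating $\lambda=0$ (equivalently $\omega=0$) is precisely that the hyperbolic dynamics degenerate to a constant, so the damping interface condition at $b$ never has to compete with nontrivial propagation between $a$ and $b$. This is why the preceding remark flags that challenges arise only for $\omega\neq0$; for $\omega=0$ the conclusion is essentially bookkeeping on the constant vector $v$. The one point I would state carefully is that $v$ is honestly constant even though $\mathcal{H}^{-1}$ is merely bounded and measurable, but this is immediate because the right-hand side is identically zero irrespective of the coefficient, so no regularity of $\mathcal{H}$ is invoked.
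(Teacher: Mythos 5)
Your argument is correct and coincides with the paper's own proof: both observe that for $\lambda=0$ the right-hand side of \eqref{eq:exampleode} vanishes, so $v$ is constant, and then the boundary conditions force all components to be zero. The bookkeeping you carry out (including the step $F_{\two}(b)=F_{\three}(b)=0$ implying $F_{\one}(b)=0$ via the interface condition) matches the paper exactly.
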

\begin{proof}
It is easy to see that for $\omega=0$ the solution of (\ref{eq:21HZ}) are constants, i.e.,
\[
  v_{k}(\zeta) =\begin{bmatrix} \nu_k \\ F_k \end{bmatrix},\quad \zeta \in [a,b],\quad  k \in  \{{\one},\two,\three\}.
\]
Since, $\nu_k = \nu_{a,k}=\nu_{b,k}$ for all $k \in  \{{\one},\two,\three\}$, by assumption it follows that $\nu_k=0$ for all $k \in  \{{\one},\two,\three\}$. Similarly, we infer $F_k=0$ for $k\in\{\two,\three\}$ and from $ F_{\one}(b)+F_{\two}(b)+F_{\three}(b) =0$ it thus follows that $F_{\three}=0$, which leads to $v=0$.
\end{proof}

For the next observation, we recall that, by \eqref{eq:P1H}, the matrices $P_1$ and $\mathcal{H}(\zeta)$ are block-diagonal with $2\times 2$-blocks sitting in its diagonal. Thus, the fundamental solution of \eqref{eq:exampleode} is, too, block-diagonal. Hence, for any $\omega\in \R$, $\Psi_{i\omega}$ can be represented by 
\begin{equation}
\label{eq:psik}
\Psi_{i\omega,k}(\zeta) = \begin{bmatrix} \Psi_{i\omega,k}^{11}(\zeta) & \Psi_{i\omega,k}^{12}(\zeta)\\ \Psi_{i\omega,k}^{21} (\zeta) & \Psi_{i\omega,k}^{22}(\zeta) \end{bmatrix},\quad k\in \{\one,\two,\three\},
\end{equation} each $\Psi_{i\omega,k}$ being the fundamental solution of \eqref{eq:21HZ}, that is,
\begin{equation}
\label{eq:psikode}
  \frac{dv_k}{d\zeta}(\zeta) = i\omega \begin{bmatrix} 0 & T_k(\zeta)^{-1} \\ \rho_k(\zeta) & 0 \end{bmatrix} v_k(\zeta), \quad k\in \{{\one},\two,\three\}.
\end{equation}
With the latter in mind, we find a criterion eventually ensuring the validity of condition (iii) in Theorem \ref{thm:mainex}.
\begin{prop}\label{prop:omeganonzero} Let $\omega\in \R$ and let $v\colon [a,b]\to \CC^6$ be a solution of \eqref{eq:exampleode} with $\lambda=i\omega$. Denote $v(a)= \big(\begin{bmatrix} \nu_{a,k} \\ F_k(a) \end{bmatrix}\big)_{k\in \{\one,\two,\three\}}$ and similarly for $v(b)$ and assume
\begin{align*}
 &   \nu_{a,I} =\nu_{b,\one} = \nu_{b,\two}=\nu_{b,\three}=0,   F_{\two}(a)=F_{\three}(a) = 0, F_{\one}(b)+F_{\two}(b)+F_{\three}(b) =0.
\end{align*}
If 
\begin{equation}\label{eq:psi}
    |\Psi_{i\omega,{\one}}^{12}(b)\Psi_{i\omega,\two}^{11}(b)|+|\Psi_{i\omega,{\one}}^{12}(b)\Psi_{i\omega,\three}^{11}(b)|+|\Psi_{i\omega,{\two}}^{11}(b)\Psi_{i\omega,\three}^{11}(b)|  > 0,
\end{equation}
then $v=0$.
\end{prop}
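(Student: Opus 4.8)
The plan is to exploit the block-diagonal structure of $\Psi_{i\omega}$ recorded in \eqref{eq:psik}, which decouples the single $6$-dimensional boundary value problem into three scalar relations, one per string, plus one coupling equation coming from the force balance at the node. First I would write $v_k(b)=\Psi_{i\omega,k}(b)v_k(a)$ for each $k\in\{\one,\two,\three\}$ and read off components. Using $\nu_{a,\one}=0$ for string $\one$ and $F_{\two}(a)=F_{\three}(a)=0$ for strings $\two,\three$, the vanishing $\nu_{b,\one}=\nu_{b,\two}=\nu_{b,\three}=0$ yields the three decoupled equations
\[
\Psi^{12}_{i\omega,\one}(b)F_{\one}(a)=0,\qquad \Psi^{11}_{i\omega,\two}(b)\nu_{a,\two}=0,\qquad \Psi^{11}_{i\omega,\three}(b)\nu_{a,\three}=0,
\]
while $F_{\one}(b)+F_{\two}(b)+F_{\three}(b)=0$ becomes the coupling equation
\[
\Psi^{22}_{i\omega,\one}(b)F_{\one}(a)+\Psi^{21}_{i\omega,\two}(b)\nu_{a,\two}+\Psi^{21}_{i\omega,\three}(b)\nu_{a,\three}=0.
\]
Since $\Psi_{i\omega,k}$ is a fundamental solution, each $\Psi_{i\omega,k}(\zeta)$ is invertible, so $v=0$ is equivalent to $v(a)=0$, i.e.\ to $F_{\one}(a)=\nu_{a,\two}=\nu_{a,\three}=0$.

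The key auxiliary observation is that each coefficient matrix $i\omega\left[\begin{smallmatrix} 0 & T_k^{-1}\\ \rho_k & 0\end{smallmatrix}\right]$ has zero trace, so by Liouville's formula $\det\Psi_{i\omega,k}(b)=1$ for every $k$ and every $\omega$. Consequently, whenever a diagonal entry of $\Psi_{i\omega,k}(b)$ vanishes its off-diagonal entries cannot: if $\Psi^{11}_{i\omega,k}(b)=0$ then $\Psi^{12}_{i\omega,k}(b)\Psi^{21}_{i\omega,k}(b)=-1$, and if $\Psi^{12}_{i\omega,\one}(b)=0$ then $\Psi^{11}_{i\omega,\one}(b)\Psi^{22}_{i\omega,\one}(b)=1$.

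Next I would note that hypothesis \eqref{eq:psi}, being the sum of the absolute values of the three pairwise products of $\Psi^{12}_{i\omega,\one}(b)$, $\Psi^{11}_{i\omega,\two}(b)$, $\Psi^{11}_{i\omega,\three}(b)$, is strictly positive precisely when at least two of these three numbers are nonzero. I would then split into cases according to which are nonzero. If all three are nonzero, the three decoupled equations immediately force $F_{\one}(a)=\nu_{a,\two}=\nu_{a,\three}=0$. Otherwise exactly one of the three numbers vanishes; the two nonzero ones kill two of the unknowns via the decoupled equations, and the coupling equation together with the determinant fact disposes of the last. For instance, if $\Psi^{11}_{i\omega,\three}(b)=0$ while the others are nonzero, then $F_{\one}(a)=\nu_{a,\two}=0$, the coupling equation reduces to $\Psi^{21}_{i\omega,\three}(b)\nu_{a,\three}=0$, and $\det\Psi_{i\omega,\three}(b)=1$ forces $\Psi^{21}_{i\omega,\three}(b)\neq 0$, whence $\nu_{a,\three}=0$; the case $\Psi^{12}_{i\omega,\one}(b)=0$ is handled analogously using $\Psi^{22}_{i\omega,\one}(b)\neq 0$, and the case $\Psi^{11}_{i\omega,\two}(b)=0$ is symmetric to that of string $\three$. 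In every case $v(a)=0$, hence $v=0$.

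The only genuinely subtle point, and the step I would treat most carefully, is the handling of the cases in which one diagonal quantity vanishes: there the corresponding decoupled equation carries no information, and one must feed the determinant identity into the coupling equation to recover the remaining unknown. Everything else reduces to direct component computation.
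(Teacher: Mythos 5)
Your proof is correct, and the opening is identical to the paper's: both exploit the block-diagonal structure to extract the three decoupled relations $\Psi^{12}_{i\omega,\one}(b)F_{\one}(a)=\Psi^{11}_{i\omega,\two}(b)\nu_{a,\two}=\Psi^{11}_{i\omega,\three}(b)\nu_{a,\three}=0$ and observe that \eqref{eq:psi} forces at least two of the three coefficients to be nonzero, so that two of the unknowns $F_{\one}(a),\nu_{a,\two},\nu_{a,\three}$ vanish outright. Where you diverge is in killing the third unknown. The paper argues dynamically: once two of the strings are identified as identically zero, the force balance $F_{\one}(b)+F_{\two}(b)+F_{\three}(b)=0$ forces the terminal force of the remaining string to vanish, so that string has zero terminal data $\left[\begin{smallmatrix}\nu_{b,k}\\ F_k(b)\end{smallmatrix}\right]=0$ and is zero by uniqueness of solutions of \eqref{eq:psikode}. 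You instead write out the force balance explicitly as the linear coupling equation $\Psi^{22}_{i\omega,\one}(b)F_{\one}(a)+\Psi^{21}_{i\omega,\two}(b)\nu_{a,\two}+\Psi^{21}_{i\omega,\three}(b)\nu_{a,\three}=0$ in the initial data and invoke $\det\Psi_{i\omega,k}(b)=1$ (Liouville, traceless coefficient matrix) to see that the surviving coefficient is nonzero. Both are sound; your version reduces the whole proposition to a rank computation on a $4\times 3$ linear system in $(F_{\one}(a),\nu_{a,\two},\nu_{a,\three})$, which is slightly more self-contained and makes the mechanism transparent, while the paper's version avoids the determinant observation (though it uses the same invertibility fact elsewhere, in Theorem \ref{thm:stabgen}). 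One small remark: you do not actually need Liouville's formula here -- mere invertibility of $\Psi_{i\omega,k}(b)$, guaranteed by Corollary \ref{cor:exfs} and the flow property, already gives $\Psi^{21}_{i\omega,k}(b)\neq 0$ when $\Psi^{11}_{i\omega,k}(b)=0$ and $\Psi^{22}_{i\omega,\one}(b)\neq 0$ when $\Psi^{12}_{i\omega,\one}(b)=0$; this sidesteps any worry about justifying the Wronskian identity for the merely measurable coefficients $T_k,\rho_k$.
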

\begin{proof}
By the mentioned block structure, it follows that we can write 
\[
   v_k(\zeta) = \Psi_{i\omega,k}(\zeta) v_{a,k} = \begin{bmatrix} \Psi_{i\omega,k}^{11}(\zeta) & \Psi_{i\omega,k}^{12}(\zeta)\\ \Psi_{i\omega,k}^{21} (\zeta) & \Psi_{i\omega,k}^{22}(\zeta) \end{bmatrix} \begin{bmatrix} \nu_{a,k} \\ F_k(a) \end{bmatrix}, \quad k \in \{{\one},\two,\three\}.
\]
In particular,
\[
\begin{bmatrix} \nu_{b,k} \\ F_k(b) \end{bmatrix} =  \begin{bmatrix} \Psi_{i\omega,k}^{11}(b) & \Psi_{i\omega,k}^{12}(b)\\ \Psi_{i\omega,k}^{21} (b) & \Psi_{i\omega,k}^{22}(b) \end{bmatrix} \begin{bmatrix} \nu_{a,k} \\ F_k(a) \end{bmatrix}.
\]
By assumption, we obtain
\begin{align*}
0&=\nu_{b,\one} = \Psi_{i\omega,\one}^{11}(b)\nu_{a,\one} + \Psi_{i\omega,\one}^{12}(b)F_\one(a) =\Psi_{i\omega,\one}^{12}(b)F_\one(a), \\
0&=\nu_{b,\two} = \Psi_{i\omega,\two}^{11}(b)\nu_{a,\two} + \Psi_{i\omega,\two}^{12}(b)F_\two(a) =\Psi_{i\omega,\two}^{11}(b)\nu_{a,\two}, \\
0&=\nu_{b,\three} = \Psi_{i\omega,\three}^{11}(b)\nu_{a,\three} + \Psi_{i\omega,\two}^{12}(b)F_\three(a) =\Psi_{i\omega,\three}^{11}(b)\nu_{a,\three}.
\end{align*}
Using the condition in \eqref{eq:psi}, we obtain one of the following alternatives:
\[
\nu_{a,\two}=\nu_{a,\three}=0\text{ or }F_{\one}(a)=\nu_{b,\two}=0\text{ or }F_{\one}(a)=\nu_{b,\three}=0.
\]
We consider $\nu_{a,\two}=\nu_{a,\three}=0$ first. For this note that, by assumption, $F_{\two}(a)=F_{\three}(a) = 0$ and, so, $v_k =0$ for $k\in \{\two,\three\}$. In particular, we find $F_{\two}(b) = F_{\three}(b)=0$. Thus, the assumed equality $F_{\one}(b)+F_{\two}(b)+F_{\three}(b) =0$ leads to $F_{\one}(b)=0$ and, in conjunction with $\nu_{b,\one}=0$, leads to $v_{\one}=0$ by uniqueness of the solution of \eqref{eq:psikode} for given data.

Next, we consider $F_{\one}(a)=0$ and $\nu_{a,\two}=0\text{ or }\nu_{a,\three}=0$. It suffices to consider the first alternative as the other case follows a similar rationale. In any case, $F_{\one}(a)=0$ together with the assumed $\nu_{a,\one}=0$ implies $v_{\one}=0$. The condition $\nu_{a,\two}=0$ together with $F_{\two}(a)=0$ leads  to $v_{\two}=0$. As a consequence, we find $F_{\one}(b)=F_{\two}(b)=0$. Hence, by assumption, $F_{\three}(b)=0$, which together with $v_{b,\three}=0$ leads to $v_{\three}=0$ again by a uniqueness argument.
\end{proof}
We now have a criterion at hand that characterises semi-uniform (or, equivalently, asymptotic) stability for the semigroup $\left(T(t)\right)_{t\geq 0}$ in question:
\begin{thm}
\label{thm:stabgen} 
For $\omega\in \R$ let $\Psi_{i\omega,k}$ be as in \eqref{eq:psik}, $k\in \{\one,\two,\three\}$.
If for all $\omega\in \R\setminus\{0\}$, we have
\[
    |\Psi_{i\omega,{\one}}^{12}(b)\Psi_{i\omega,\two}^{11}(b)|+|\Psi_{i\omega,{\one}}^{12}(b)\Psi_{i\omega,\three}^{11}(b)|+|\Psi_{i\omega,{\two}}^{11}(b)\Psi_{i\omega,\three}^{11}(b)|  > 0,
\]then the semigroup $(T(t))_{t \geq 0}$ given in Theorem \ref{thm:genexampl} is semi-uniformly stable.

If, on the other hand, 
\[
    |\Psi_{i\omega,{\one}}^{12}(b)\Psi_{i\omega,\two}^{11}(b)|+|\Psi_{i\omega,{\one}}^{12}(b)\Psi_{i\omega,\three}^{11}(b)|+|\Psi_{i\omega,{\two}}^{11}(b)\Psi_{i\omega,\three}^{11}(b)|  =0
\]
for one $\omega\in \R$, then $i\omega$ is an eigenvalue of $(A,\dom(A))$ given in  Theorem \ref{thm:genexampl} and $(T(t))_{t\geq 0}$ is not semi-uniformly stable.
\end{thm}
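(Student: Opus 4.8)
The plan is to derive both assertions from the characterisation of semi-uniform stability in Theorem \ref{thm:mainex}, using that for each fixed $\omega\in\R$ the non-negative scalar
\[
  s(\omega) \coloneqq |\Psi_{i\omega,{\one}}^{12}(b)\Psi_{i\omega,\two}^{11}(b)|+|\Psi_{i\omega,{\one}}^{12}(b)\Psi_{i\omega,\three}^{11}(b)|+|\Psi_{i\omega,{\two}}^{11}(b)\Psi_{i\omega,\three}^{11}(b)|
\]
is either strictly positive or zero, so the two hypotheses are exhaustive and mutually exclusive. Everything then reduces to checking condition (iii) of Theorem \ref{thm:mainex}, which holds for every $\omega$ exactly when the semigroup is semi-uniformly stable. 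Note that at $\omega=0$ the fundamental solutions in \eqref{eq:psikode} are the identity, so $s(0)=1>0$; hence any $\omega$ with $s(\omega)=0$ is automatically nonzero, matching the exclusion of $\omega=0$ in the first assertion.

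For the first assertion I would simply assemble the preceding results. Condition (iii) of Theorem \ref{thm:mainex} quantifies over all $\omega\in\R$. At $\omega=0$ it holds unconditionally by Lemma \ref{lem:omegazero}. For $\omega\neq 0$ the hypothesis $s(\omega)>0$ is precisely the inequality \eqref{eq:psi}, so Proposition \ref{prop:omeganonzero} forces $v=0$ for every solution with the prescribed boundary values; thus condition (iii) holds at each such $\omega$ too. Since condition (iii) then holds for all $\omega\in\R$, Theorem \ref{thm:mainex} yields semi-uniform stability. This direction is just a combination of Lemma \ref{lem:omegazero} and Proposition \ref{prop:omeganonzero}.

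The second assertion is the substantive one and demands a converse construction. First I would record that, as all three summands of $s(\omega)$ are non-negative, $s(\omega)=0$ forces each of the three products to vanish, which holds if and only if at least two of the numbers $\Psi_{i\omega,{\one}}^{12}(b)$, $\Psi_{i\omega,\two}^{11}(b)$, $\Psi_{i\omega,\three}^{11}(b)$ are zero. Parametrising a solution $v$ of \eqref{eq:exampleode} (with $\lambda=i\omega$) subject to $\nu_{a,\one}=0$ and $F_\two(a)=F_\three(a)=0$ by the free data $(F_\one(a),\nu_{a,\two},\nu_{a,\three})\in\CC^3$, the remaining boundary conditions of condition (iii), as computed in the proof of Proposition \ref{prop:omeganonzero}, read
\[
  \Psi_{i\omega,\one}^{12}(b)F_\one(a)=0,\quad \Psi_{i\omega,\two}^{11}(b)\nu_{a,\two}=0,\quad \Psi_{i\omega,\three}^{11}(b)\nu_{a,\three}=0,
\]
together with the single closing equation $F_\one(b)+F_\two(b)+F_\three(b)=0$, expressed through the second rows of the $\Psi_{i\omega,k}(b)$. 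Because at least two coefficients in the first three equations vanish, at most one of them is an effective constraint; adjoining the closing equation leaves a homogeneous linear system of at most two equations in three unknowns, which necessarily admits a nonzero solution. This produces a nonzero $v$ satisfying all boundary conditions of condition (iii).

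With such a $v\neq 0$ at hand, Remark \ref{rem:eig} identifies $i\omega$ as an eigenvalue of $(A,\dom(A))$, and the resulting failure of condition (iii) of Theorem \ref{thm:main1} (equivalently, Theorem \ref{thm:susA}) shows that $(T(t))_{t\geq 0}$ is not semi-uniformly stable. The main obstacle is precisely this converse construction: translating $s(\omega)=0$ into the vanishing of two entries, and then verifying that the associated boundary-value problem has a nontrivial solution. The clean dimension count (at most two constraints in three unknowns) is what carries the argument uniformly across the sub-cases; if one wishes to track the sub-cases more explicitly, the relation $\det\Psi_{i\omega,k}(b)=1$, which follows since the coefficient matrix in \eqref{eq:psikode} has zero trace, guarantees for instance that $\Psi_{i\omega,k}^{21}(b)\neq 0$ whenever $\Psi_{i\omega,k}^{11}(b)=0$.
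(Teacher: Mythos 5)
Your proof is correct, and the first half (deriving semi-uniform stability from Lemma \ref{lem:omegazero} and Proposition \ref{prop:omeganonzero} via condition (iii) of Theorem \ref{thm:mainex}) coincides with the paper's argument. For the converse, however, you take a genuinely different and in fact slicker route. The paper proceeds by explicit case analysis: for each of the sub-cases in which two of $\Psi_{i\omega,\one}^{12}(b)$, $\Psi_{i\omega,\two}^{11}(b)$, $\Psi_{i\omega,\three}^{11}(b)$ vanish, it writes down concrete initial data (e.g.\ $\nu_{a,\two}=1$, $F_\one(a)=-(\Psi_{i\omega,\one}^{22}(b))^{-1}\Psi_{i\omega,\two}^{21}(b)$), using the invertibility of the fundamental matrix to guarantee that the required complementary entry is nonzero, and then verifies each boundary condition by hand. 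You instead observe that, after imposing $\nu_{a,\one}=F_\two(a)=F_\three(a)=0$, the remaining conditions $\nu_{b,\one}=\nu_{b,\two}=\nu_{b,\three}=0$ reduce to $\Psi_{i\omega,\one}^{12}(b)F_\one(a)=\Psi_{i\omega,\two}^{11}(b)\nu_{a,\two}=\Psi_{i\omega,\three}^{11}(b)\nu_{a,\three}=0$, of which at most one is a genuine constraint since two coefficients vanish; together with the single closing equation this gives a homogeneous system of at most two equations in the three free unknowns $(F_\one(a),\nu_{a,\two},\nu_{a,\three})$, which therefore has a nontrivial solution. This dimension count handles all sub-cases uniformly and avoids the explicit formulas and the invertibility argument entirely (your closing remark on $\det\Psi_{i\omega,k}(b)=1$ is, as you say, only needed if one wants the explicit sub-case constructions). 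What the paper's version buys in exchange is an explicit description of the eigenvector data, which is reused almost verbatim in the proof of Theorem \ref{thm:charexpoex}; your version buys brevity and robustness against case bookkeeping. The concluding appeal to Remark \ref{rem:eig} and Theorem \ref{thm:main1} to identify $i\omega$ as an eigenvalue and rule out semi-uniform stability is the same in both.
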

\begin{proof}
The result follows upon application of Theorem \ref{thm:mainex}. In the first case, by Lemma \ref{lem:omegazero} (for $\omega=0$) and Proposition \ref{prop:omeganonzero} (for $\omega\neq 0$) condition (iii)~of Theorem \ref{thm:mainex} is satisfied and, therefore, $(T(t))_{t\geq 0}$ is semi-uniformly stable.

Conversely, assume 
\[
    |\Psi_{i\omega,{\one}}^{12}(b)\Psi_{i\omega,\two}^{11}(b)|+|\Psi_{i\omega,{\one}}^{12}(b)\Psi_{i\omega,\three}^{11}(b)|+|\Psi_{i\omega,{\two}}^{11}(b)\Psi_{i\omega,\three}^{11}(b)|  =0
\]
for one $\omega\in \R$. By Remark \ref{rem:eig} it suffices to construct a non-zero solution $v$ satisfying the boundary equations in condition (iii)~of Theorem \ref{thm:mainex}. The assumption implies that two of the three numbers $\Psi_{i\omega,{\one}}^{12}(b), \Psi_{i\omega,\two}^{11}(b), \Psi_{i\omega,\three}^{11}(b)$ are zero. We discuss the cases $\Psi_{i\omega,{\one}}^{12}(b)= \Psi_{i\omega,\two}^{11}(b)=0$ and $\Psi_{i\omega,{\three}}^{11}(b)= \Psi_{i\omega,\two}^{11}(b)=0$, the remaining one can be dealt with similarly to the first. 

Consider the first case, i.e., $\Psi_{i\omega,{\one}}^{12}(b)= \Psi_{i\omega,\two}^{11}(b)=0$. Since $\Psi_{i\omega,\one}$ is a fundamental solution, $\Psi_{i\omega,\one}(b)$ is invertible, thus, by $\Psi_{i\omega,{\one}}^{12}(b)=0$, it follows that $\Psi_{i\omega,{\one}}^{22}(b)\neq 0$. For the construction of a solution, it suffices to provide a non-trivial initial value $v(a)= \big(\left[\begin{smallmatrix} \nu_{a,k} \\ F_k(a) \end{smallmatrix}\right]\big)_{k\in \{\one,\two,\three\}}$ which leads to the satisfaction of the boundary conditions in condition (iii) of Theorem  \ref{thm:mainex}. For this, we put $\nu_{a,\one}=\nu_{a,\three}=0$ as well as $F_{\two}(a)=F_{\three}(a)=0$. Moreover, we set $\nu_{a,\two}=1$ and $F_{\one}(a)= (\Psi_{i\omega,\one}^{22}(b))^{-1}(-\Psi_{i\omega,\two}^{21}(b))$. Then, $v(a)\neq 0$ and, hence, $v\neq 0$. Moreover, evidently, $\nu_{b,\three}=F_{\three}(b)=0$ since $v_{\three}=0$. Next,
\begin{align*}
   \nu_{b,\one}&= \Psi_{i\omega,\one}^{11}(b)\nu_{a,\one} + \Psi_{i\omega,\one}^{12}(b)F_\one(a)=0\\
   \nu_{b,\two}& = \Psi_{i\omega,\two}^{11}(b)\nu_{a,\two} + \Psi_{i\omega,\two}^{12}(b)F_\two(a)=0.
\end{align*}
It remains to check whether $F_{\one}(b)+F_{\two}(b)+F_{\three}(b)=0$. The last term of the left-hand side being zero, we compute
\begin{align*}
   F_{\one}(b)+F_{\two}(b) & = \Psi_{i\omega,\one}^{21}(b)\nu_{a,\one} + \Psi_{i\omega,\one}^{22}(b)F_\one(a)+\Psi_{i\omega,\two}^{21}(b)\nu_{a,\two} + \Psi_{i\omega,\two}^{22}(b)F_\two(a) \\ &= 0 + \Psi_{i\omega,\one}^{22}(b)((\Psi_{i\omega,\one}^{22}(b))^{-1}(-\Psi_{i\omega,\two}^{21}(b))) +\Psi_{i\omega,\two}^{21}(b) + 0=0,
\end{align*}
which establishes all boundary conditions satisfied by $v\neq 0$.
%

Finally, consider the case $\Psi_{i\omega,{\three}}^{11}(b)= \Psi_{i\omega,\two}^{11}(b)=0$. Since $\Psi_{i\omega,{\three}}$ is a fundamental solution, $\Psi_{i\omega,{\three}}^{11}(b)=0$ implies $\Psi_{i\omega,{\three}}^{21}(b)\neq 0$. With this observation, we may choose $\nu_{a,\one}=F_{\one}(a)=F_{\two}(a)=F_{\three}(a)=0$ and $\nu_{a,\two}=1$ and $\nu_{a,\three}= -(\Psi_{i\omega,\three}^{21}(b))^{-1}\Psi_{i\omega,\two}^{21}(b)$. Then $v(a)\neq 0$ and, thus, $v\neq 0$. Also, it is not difficult to see that $\nu_{b,k}=0$ for all $k\in \{\one,\two,\three\}$. Moreover, $F_{\one}(b)=0$. It remains to check $F_{\two}(b)+F_{\three}(b)=0$. For this, we compute
\begin{align*}
 F_{\two}(b)+F_{\three}(b) & = \Psi_{i\omega,\two}^{21}(b)\nu_{a,\two} + \Psi_{i\omega,\two}^{22}(b)F_\two(a)+\Psi_{i\omega,\three}^{21}(b)\nu_{a,\three} + \Psi_{i\omega,\three}^{22}(b)F_\three(a) \\
 & = \Psi_{i\omega,\two}^{21}(b) + 0  -\Psi_{i\omega,\three}^{21}(b)(\Psi_{i\omega,\three}^{21}(b))^{-1}\Psi_{i\omega,\two}^{21}(b)+ 0 = 0,
\end{align*}
which proves the assertion.
\end{proof}

Theorem \ref{thm:stabgen} is formulated in terms of the fundamental solution of \eqref{eq:exampleode}. Since this fundamental solution is a rather complicated object for variable coefficients $\mathcal{H}(\zeta)$ a more detailed characterisation can only be provided if we assume additional properties of $\mathcal{H}$. As an exemplary case, we consider the most elementary one next, the case of constant coefficients. We shall see that already in this case the conditions characterising asymptotic stability are quite subtle.

For the rest of this section, we assume that $\mathcal{H}$ is constant, that is,
\begin{equation}\label{eq:Hconst}
   \mathcal{H}_{k}(\zeta)=\begin{bmatrix}\frac{1}{\rho_k} & 0 \\ 0 & T_k\end{bmatrix}
\end{equation}
for some $\rho_k, T_k>0$ for all $k\in \{\one,\two,\three\}$. The criterion in Theorem \ref{thm:stabgen} can be expressed in terms of certain ratios of the coefficients. Therefore for $k\in \{\one,\two,\three\}$ we introduce $c_k>0$ satisfying 
\begin{equation}\label{eq:ck}
c_k^2=\frac{\rho_k}{T_k}.
\end{equation} 
Note that $c_k$ is the inverse of the characteristic speed in the strings. By differentiation and uniqueness of fundamental solutions, it is not difficult to see that for $k\in \{\one,\two,\three\}$ we have
\begin{equation}\label{eq:constpsi}
  \Psi_{i\omega, k}(\zeta) = \begin{bmatrix} \cos(c_k \omega(\zeta-a)) & \frac{ic_k}{\rho_k} \sin(c_k\omega(\zeta-a)) \\  ic_k T_k \sin(c_k\omega(\zeta-a)) & \cos(c_k \omega(\zeta-a))
  \end{bmatrix}.
\end{equation}
Thus
\begin{align}
&\label{eq:psi1}  \Psi_{i\omega,{\one}}^{12}(b) = (ic_\one/\rho_\one)\sin(c_{\one}\, \omega(b-a))\\
&\label{eq:psi2}  \Psi_{i\omega,{\two}}^{11}(b) = \cos(c_{\two}\omega(b-a)) \\
&\label{eq:psi3}  \Psi_{i\omega,{\three}}^{11}(b) =  \cos(c_{\three}  \omega(b-a)) .
\end{align} 
For the next result we introduce
\begin{align*}
\mathbb{O}/\mathbb{O} & \coloneqq \{\frac{1+2k}{1+2\ell}; k,\ell\in\Z\} \\
\mathbb{E}/\mathbb{O} & \coloneqq \{\frac{2k}{1+2\ell}; k,\ell\in\Z\}.
\end{align*}
\begin{prop}\label{prop:instabpart} 
For $\omega\in \R$ consider \eqref{eq:constpsi} and let $c_k>0$ be given by \eqref{eq:ck} for $k\in \{\one,\two,\three\}$. Then the following conditions are equivalent:
\begin{enumerate}[(i)]
\item There is $\omega\in \R$ with
\[
    |\Psi_{i\omega,{\one}}^{12}(b)\Psi_{i\omega,\two}^{11}(b)|+|\Psi_{i\omega,{\one}}^{12}(b)\Psi_{i\omega,\three}^{11}(b)|+|\Psi_{i\omega,{\two}}^{11}(b)\Psi_{i\omega,\three}^{11}(b)| =0.
\]
\item At least one of the following statements is true
\[
    \frac{c_{\one}}{c_{\two}} \in \mathbb{E}/\mathbb{O},\quad     \frac{c_{\one}}{c_{\three}} \in \mathbb{E}/\mathbb{O},\quad     \frac{c_{\two}}{c_{\three}} \in \mathbb{O}/\mathbb{O}.
\]
\end{enumerate}
In either case, there exists infinitely many $\omega \in \R$ with the property in (i).
\end{prop}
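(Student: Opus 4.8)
The plan is to feed the explicit formulas \eqref{eq:psi1}--\eqref{eq:psi3} into condition (i) and reduce it to an elementary question about when two of three trigonometric quantities vanish simultaneously. Write $L\coloneqq b-a>0$ and abbreviate $s(\omega)\coloneqq\sin(c_{\one}\omega L)$, $p(\omega)\coloneqq\cos(c_{\two}\omega L)$, $q(\omega)\coloneqq\cos(c_{\three}\omega L)$. Since $|\Psi_{i\omega,\one}^{12}(b)|=(c_{\one}/\rho_{\one})\,|s(\omega)|$ with $c_{\one}/\rho_{\one}>0$, and $|\Psi_{i\omega,\two}^{11}(b)|=|p(\omega)|$, $|\Psi_{i\omega,\three}^{11}(b)|=|q(\omega)|$, the left-hand side of the equation in (i) is a sum of three nonnegative terms; hence it vanishes at a given $\omega$ if and only if all three products $|s(\omega)p(\omega)|$, $|s(\omega)q(\omega)|$, $|p(\omega)q(\omega)|$ vanish. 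An elementary observation (at most one of three numbers can be nonzero if all pairwise products vanish) shows this is equivalent to saying that \emph{at least two} of $s(\omega),p(\omega),q(\omega)$ equal zero.

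Thus (i) is equivalent to the solvability over $\omega\in\R$ of at least one of the three systems $s=p=0$, $s=q=0$, or $p=q=0$. First I would record the zero sets: $s(\omega)=0\Leftrightarrow\omega\in\frac{\pi}{c_{\one}L}\Z$, while $p(\omega)=0\Leftrightarrow\omega\in\frac{\pi}{2c_{\two}L}(2\Z+1)$ and likewise for $q$ with $c_{\three}$. Note that any cosine-vanishing condition forces $\omega\neq 0$ automatically, since an odd multiple of $\pi/2$ is never $0$; so I need not separately exclude $\omega=0$, and indeed at $\omega=0$ both cosines equal $1$, so (i) fails there, consistently.

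Intersecting the first two lattices, $s=p=0$ has a solution precisely when there are integers $k,m$ with $\frac{k}{c_{\one}}=\frac{2m+1}{2c_{\two}}$, i.e.\ $\frac{c_{\one}}{c_{\two}}=\frac{2k}{2m+1}$; positivity of $c_{\one}/c_{\two}$ forces the even numerator to be nonzero, and such integers exist exactly when $\frac{c_{\one}}{c_{\two}}\in\mathbb{E}/\mathbb{O}$. The identical computation with $c_{\three}$ gives $\frac{c_{\one}}{c_{\three}}\in\mathbb{E}/\mathbb{O}$ for the system $s=q=0$, and intersecting the two cosine lattices gives $\frac{c_{\two}}{c_{\three}}=\frac{2m+1}{2\ell+1}\in\mathbb{O}/\mathbb{O}$ for $p=q=0$. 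Combining the three cases yields the equivalence (i)$\Leftrightarrow$(ii).

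For the final assertion I would observe that each of the three solution sets is invariant under multiplication by odd integers: if $\omega_0$ solves one of the systems, then so does $(2j+1)\omega_0$ for every $j\in\Z$, because scaling an integer multiple of $\pi$ by any integer keeps $\sin$ at zero, while scaling an odd multiple of $\pi/2$ by an odd integer again yields an odd multiple of $\pi/2$, keeping $\cos$ at zero. Since each admissible case produces a nonzero $\omega_0$, this furnishes infinitely many distinct $\omega$ with the property in (i). The only point requiring genuine care is the parity bookkeeping --- verifying that the even/odd structure encoded in $\mathbb{E}/\mathbb{O}$ and $\mathbb{O}/\mathbb{O}$ matches exactly the $\sin$/$\cos$ vanishing patterns, and that the positivity of the $c_k$ removes the spurious $\omega=0$ solution otherwise hidden in the ``even numerator $=0$'' branch of $\mathbb{E}/\mathbb{O}$.
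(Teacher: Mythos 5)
Your proposal is correct and follows essentially the same route as the paper: reduce (i) to the simultaneous vanishing of at least two of the three trigonometric factors, compute the zero lattices, and match the resulting Diophantine conditions to $\mathbb{E}/\mathbb{O}$ and $\mathbb{O}/\mathbb{O}$. Your argument for infinitely many solutions --- scaling a nonzero solution $\omega_0$ by odd integers --- is just a rephrasing of the paper's construction of the integer pairs $(kq,\tfrac{q-1}{2}+\ell q)$ for odd $q$, and it is equally valid.
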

\begin{proof}
Before we dive into the actual proof, we deduce the following equivalences from the form of the fundamental solutions given in \eqref{eq:psi1}--(\ref{eq:psi3}), 
\begin{align*}
&  \Psi_{i\omega,{\one}}^{12}(b) = 0 \iff \sin(c_{\one}\, \omega(b-a))=0\iff \omega \in \frac{\pi}{c_{\one}(b-a)} \Z \\
&  \Psi_{i\omega,{\two}}^{11}(b) = 0 \iff \cos(c_{\two}\omega(b-a)) =0\iff \omega \in  \frac{\pi}{2c_{\two}(b-a)}+\frac{\pi}{c_{\two}(b-a)} \Z\\
&  \Psi_{i\omega,{\three}}^{11}(b) = 0 \iff \cos(c_{\three}  \omega(b-a))=0 \iff  \omega \in \frac{\pi}{2c_{\three}(b-a)}+\frac{\pi}{c_{\three}(b-a)} \Z.
\end{align*}
With these observations, we find $\omega\in \R$ with  $ \Psi_{i\omega,{\one}}^{12}(b)= \Psi_{i\omega,{\two}}^{11}(b) =0$ if and only if there are integers $k,\ell \in \Z$ with
\[
 \frac{\pi}{c_{\one}(b-a)} k = \frac{\pi}{2c_{\two}(b-a)}+\frac{\pi}{c_{\two}(b-a)}\ell, \text{ that is, } \frac{c_{\one}}{c_\two}=\frac{2k}{1+2\ell}.
\]
Similarly, we find $\omega\in \R$ with $\Psi_{i\omega,{\one}}^{12}(b)= \Psi_{i\omega,{\three}}^{11}(b) =0$ if and only if there are integers $k,\ell\in \Z$ with
\[
   \frac{c_{\one}}{c_\three}=\frac{2k}{1+2\ell}.
\]
Finally, there exists $\omega\in \R$ with $\Psi_{i\omega,{\two}}^{11}(b)=\Psi_{i\omega,{\three}}^{11}(b)=0$ if and only if there are integers $k,\ell\in \Z$ such that 
\[
   \frac{\pi}{2c_{\two}(b-a)}+\frac{\pi}{c_{\two}(b-a)}k = \frac{\pi}{2c_{\three}(b-a)}+\frac{\pi}{c_{\three}(b-a)}\ell,\text{ that is, } \frac{c_{\two}}{c_{\three}}=\frac{1+2k}{1+2\ell}.
\]

Finally, assume that either (hence both) statements (i), and (ii), are true. We carry out the argument only for the case $ \frac{c_{\one}}{c_{\two}} \in \mathbb{E}/\mathbb{O}$ as the other cases can be dealt with similarly. Note that the above proof shows that any pair $(k,\ell)\in \N\times \N$ such that 
\begin{equation}\label{eq:kl}
  \frac{c_{\one}}{c_{\two}}= \frac{2k}{1+2\ell}
\end{equation}
yields an $\omega$ with the desired properties. Note that two distinct pairs lead to different $\omega$'s. Thus, if $(k,\ell)$ satisfies \eqref{eq:kl}, then for every odd $q$, we get that $(kq,\frac{q-1}{2}+\ell q)$ yields
\[
 \frac{2kq}{1+2(\frac{q-1}{2}+\ell q)}=\frac{2kq}{(1+2\ell)q} =\frac{2k}{1+2\ell}= \frac{c_{\one}}{c_{\two}},
\]which eventually shows the assertion.
\end{proof}
Finally, we can summarise our characterisation of semi-uniform stability/asymptotic stability for the case of constant coefficients.
\begin{thm}\label{thm:charexamconst} Let $(A,\dom(A))$ as well as $(T(t))_{t\geq 0}$ be given by Theorem \ref{thm:genexampl}. Assume that $\mathcal{H}$ is constant, that is, given by \eqref{eq:Hconst} and let $\Psi$ be given by \eqref{eq:psik}. Then the following conditions are equivalent:
\begin{enumerate}[(i)]
  \item $(T(t))_{t\geq 0}$ is not semi-uniformly stable.
  \item There exists $\omega\in \R$ with
\[
    |\Psi_{i\omega,{\one}}^{12}(b)\Psi_{i\omega,\two}^{11}(b)|+|\Psi_{i\omega,{\one}}^{12}(b)\Psi_{i\omega,\three}^{11}(b)|+|\Psi_{i\omega,{\two}}^{11}(b)\Psi_{i\omega,\three}^{11}(b)| =0.
    \]
\item At least one of the following statements is true
\[
    \frac{c_{\one}}{c_{\two}} \in \mathbb{E}/\mathbb{O},\quad     \frac{c_{\one}}{c_{\three}} \in \mathbb{E}/\mathbb{O},\quad     \frac{c_{\two}}{c_{\three}} \in \mathbb{O}/\mathbb{O},
\]where $c_k=\sqrt{\rho_k/T_k}$, $k\in \{\one,\two,\three\}$.
\item $A$ has infinitely many eigenvalues on the imaginary axis.
\end{enumerate}
\end{thm}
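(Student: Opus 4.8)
The plan is to establish all four equivalences by assembling the results already in hand, arranged as the cycle (i)$\Leftrightarrow$(ii), (ii)$\Leftrightarrow$(iii), (iii)$\Rightarrow$(iv), and (iv)$\Rightarrow$(i). The only genuinely new bookkeeping concerns the role of $\omega=0$ and the passage from a single resonant frequency to an infinitude of eigenvalues; every individual implication will reduce to a statement proved earlier.

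First I would record the elementary observation that at $\omega=0$ the fundamental solutions in \eqref{eq:constpsi} collapse to the identity, so that $\Psi_{i\cdot 0,\one}^{12}(b)=0$ while $\Psi_{i\cdot 0,\two}^{11}(b)=\Psi_{i\cdot 0,\three}^{11}(b)=1$, whence the expression appearing in (ii) equals $1>0$. Consequently the existence of \emph{some} $\omega\in\R$ for which that sum vanishes forces $\omega\neq 0$, and condition (ii) here is literally condition (i) of Proposition \ref{prop:instabpart}. This lets me invoke Proposition \ref{prop:instabpart} directly to obtain the equivalence (ii)$\Leftrightarrow$(iii).

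For (i)$\Leftrightarrow$(ii) I would appeal to Theorem \ref{thm:stabgen}. If (ii) fails, then the displayed sum is strictly positive for every $\omega\in\R\setminus\{0\}$ (the case $\omega=0$ being handled by the observation above), so the first half of Theorem \ref{thm:stabgen} yields semi-uniform stability, i.e.\ the negation of (i). Conversely, if (ii) holds, the vanishing occurs at some $\omega\neq 0$, and the second half of Theorem \ref{thm:stabgen} produces an eigenvalue $i\omega$ of $A$ and asserts that $(T(t))_{t\geq0}$ is not semi-uniformly stable, which is (i). To close the cycle I would use the final sentence of Proposition \ref{prop:instabpart}: under (iii) there are in fact infinitely many $\omega$ with the vanishing property, and by the second half of Theorem \ref{thm:stabgen} each such $i\omega$ is an eigenvalue of $A$; distinct $\omega$ give distinct points $i\omega$, so $A$ has infinitely many eigenvalues on the imaginary axis, proving (iii)$\Rightarrow$(iv). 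Finally (iv)$\Rightarrow$(i) is immediate: a single imaginary eigenvalue already gives $\sigma_p(A)\cap\i\R\neq\emptyset$, so by Theorem \ref{thm:susA} the semigroup is not semi-uniformly stable.

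Since every implication reduces to a previously established result, there is no serious analytic obstacle. The only points requiring care are the treatment of $\omega=0$, needed to confirm that condition (ii) and condition (i) of Proposition \ref{prop:instabpart} genuinely coincide, and the transfer of the \emph{infinitude} of resonant frequencies to an infinitude of \emph{distinct} eigenvalues. I expect this latter, minor point to be the main thing to verify explicitly: one must check that $\omega\mapsto i\omega$ is injective and that Theorem \ref{thm:stabgen} attaches a bona fide eigenvalue to each resonant $\omega$, so that (iv) records infinitely many rather than merely one.
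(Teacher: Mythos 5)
Your proposal is correct and follows essentially the same route as the paper: both reduce (i)$\Leftrightarrow$(ii) to Theorem \ref{thm:stabgen}, (ii)$\Leftrightarrow$(iii) to Proposition \ref{prop:instabpart}, obtain (iv)$\Rightarrow$(i) from Theorem \ref{thm:main1} (equivalently Theorem \ref{thm:susA}), and close the cycle by combining the ``infinitely many $\omega$'' clause of Proposition \ref{prop:instabpart} with the eigenvalue statement in Theorem \ref{thm:stabgen}. Your extra checks (that $\omega=0$ cannot realise the vanishing, and that distinct resonant $\omega$ yield distinct eigenvalues $i\omega$) are harmless refinements of the same argument.
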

\begin{proof} The equivalence of (i)~and (ii)~follows from Theorem \ref{thm:stabgen}. 

The equivalence of (ii)~and (iii)~follows from Proposition \ref{prop:instabpart}. 

The condition in (iv)~implies (i)~by Theorem \ref{thm:main1}. Consequently, by what we have already shown, (iv)~implies (ii), which, by Proposition \ref{prop:instabpart}, leads to the existence of infinitely many $\omega\in \R$ satisfying (ii). By the concluding statement in Theorem \ref{thm:stabgen} it follows that $A$ has infinitely many eigenvalues on the imaginary axis; which concludes the proof.
\end{proof}
As an immediate corollary we obtain the following:
\begin{cor}\label{cor:charsus} In the situation of Theorem \ref{thm:charexamconst} the following conditions are equivalent:
\begin{enumerate}
  \item[(i)]  $(T(t))_{t\geq 0}$ is semi-uniformly stable.
  \item[(ii)]  $(T(t))_{t\geq 0}$ is asymptotically stable.
  \item[(iii)]  $ \frac{c_{\one}}{c_{\two}},   \frac{c_{\one}}{c_{\three}}  \notin \mathbb{E}/\mathbb{O}$ and $ \frac{c_{\two}}{c_{\three}} \notin \mathbb{O}/\mathbb{O}$.
\end{enumerate}
\end{cor}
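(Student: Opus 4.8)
The plan is to obtain the corollary as an immediate logical consequence of the two preceding results, Theorem~\ref{thm:main1} and Theorem~\ref{thm:charexamconst}, without introducing any new computation. The work has already been done in those theorems; what remains is purely a matter of rephrasing and negating.

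First, for the equivalence of items (i) and (ii), I would note that the operator $(A,\dom(A))$ from Theorem~\ref{thm:genexampl} is a particular instance of the port-Hamiltonian generator treated in Theorem~\ref{thm:main1} (with $N=1$ and the specific data in \eqref{eq:P1H} together with the displayed $W_B$). Theorem~\ref{thm:main1} asserts that semi-uniform stability (its item (i)) and asymptotic stability (its item (ii)) are equivalent for exactly such a contraction semigroup. Hence (i)$\Leftrightarrow$(ii) here is a direct restatement of that equivalence and needs no further argument.

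Second, for the equivalence of (i) and (iii), I would simply negate Theorem~\ref{thm:charexamconst}. That theorem establishes that the \emph{failure} of semi-uniform stability (its item (i)) is equivalent to the disjunction $\frac{c_\one}{c_\two}\in\mathbb{E}/\mathbb{O}$ or $\frac{c_\one}{c_\three}\in\mathbb{E}/\mathbb{O}$ or $\frac{c_\two}{c_\three}\in\mathbb{O}/\mathbb{O}$ (its item (iii)). Taking the contrapositive and applying De Morgan's law, semi-uniform stability holds if and only if all three negations hold simultaneously, i.e. $\frac{c_\one}{c_\two},\frac{c_\one}{c_\three}\notin\mathbb{E}/\mathbb{O}$ and $\frac{c_\two}{c_\three}\notin\mathbb{O}/\mathbb{O}$, which is precisely item (iii) of the corollary.

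The main (and essentially only) point requiring care is the bookkeeping of the logical negation: Theorem~\ref{thm:charexamconst} is phrased in terms of \emph{instability}, so one must correctly turn a disjunction of membership conditions into a conjunction of non-membership conditions, matching each individual ratio to its negated counterpart in (iii). There is no genuine analytic obstacle; once the correspondence with Theorem~\ref{thm:main1} (for the stability equivalence) and Theorem~\ref{thm:charexamconst} (for the arithmetic characterisation) is invoked, the assertion follows at once.
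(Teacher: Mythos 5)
Your proposal is correct and follows exactly the paper's own argument: the equivalence of (i) and (ii) is taken from Theorem \ref{thm:main1}, and the equivalence with (iii) is obtained by negating the instability characterisation of Theorem \ref{thm:charexamconst}. Your careful handling of the De Morgan step is just an explicit spelling-out of what the paper leaves implicit.
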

\begin{proof}
Conditions (i)~and (ii)~are equivalent by Theorem \ref{thm:main1}; the remaining equivalence follows from Theorem \ref{thm:charexamconst}.
\end{proof}

\section{Vibrating strings and exponential stability}\label{Sec:4}

We complement the characterisation of exponential stability for our system of vibrating strings in the following. For this, we use the reformulation of exponential stability from Section \ref{sec:exprev}. Recall from \cite[Section 6]{TW22_expstab} that condition $\sup_{\omega\in \R}\|\Psi_{i\omega} \|<\infty$ is satisfied if both $\rho_k$ and $T_k$ are of bounded variation for all $k\in \{\one,\two,\three\}$. This is particularly the case, if $\mathcal{H}$ is constant. 


We start off with a result similar to that of Theorem \ref{thm:mainex}. For this we denote for $v_a=\big(\begin{bmatrix} \nu_{a,k} \\ F_k(a) \end{bmatrix}\big)_{k\in \{\one,\two,\three\}}\in \CC^6$ and $\omega\in \R$, $v_{v_a}^{(\omega)}\colon [a,b]\to \CC^6 = (\CC^2)^3$ to be a solution of \eqref{eq:exampleode} with $\lambda=i\omega$ and $v_{v_a}^{(\omega)}(a)=v_a$. 
\begin{prop}\label{prop:mainexexst} 
Let $(A,\dom(A))$ as well as $(T(t))_{t\geq 0}$ be given by Theorem \ref{thm:genexampl}. Furthermore, assume that $\sup_{\omega\in \R}\|\Psi_{i\omega} \|<\infty$, where $\Psi_{i\omega}$ is given by (\ref{eq:exampleode}). 
Furthermore, let $(\omega_n)_{n\in {\mathbb N}}$ be a sequence in $\R$. Then the following conditions are equivalent:
\begin{enumerate}[(i)]
\item The set
\begin{multline*}
  \{ v_a \in {\mathbb C}^{6} \mid{ \liminf_{n\to\infty}\Big(\|W_B\begin{bmatrix} \Psi_{i\omega_n} (b) \\ I_{6}\end{bmatrix}}v_a\| +\\ | v_a^* \left[\begin{smallmatrix} \Psi_{i\omega_n} (b)^* & I_{6} \end{smallmatrix}\right]  \left[\begin{smallmatrix} P_1 & 0 \\ 0 & -P_1 \end{smallmatrix} \right] \left[\begin{smallmatrix} \Psi_{i\omega_n} (b) \\ I_{6} \end{smallmatrix}\right] v_a |\Big) =0\}
\end{multline*}
contains only the zero element. 
\item 
  Let $v_a\in \CC^6$ and $v^{(n)}\coloneqq v_{v_a}^{(\omega_n)}$;  $v^{(n)}(b)\coloneqq \big(\begin{bmatrix} \nu_{b,k}^{(n)} \\ F_k^{(n)}(b) \end{bmatrix}\big)_{k\in \{\one,\two,\three\}}$. If
\begin{multline*}
\nu_{a,\one} = F_{\two}(a)=F_{\three}(a)=0\text{ and } \\
  \liminf_{n\to\infty}\Big( |\nu_{b,\one}^{(n)}-\nu_{b,\two}^{(n)}|+|\nu_{b,\one}^{(n)}-\nu_{b,\three}^{(n)}|+|\beta\nu_{b,\one}^{(n)} +F_{\one}^{(n)}(b)+F_{\two}^{(n)}(b)+F_{\three}^{(n)}(b)| \\
 + |v^{(n)}(b)^*P_1v^{(n)}(b) - v_a^*P_1v_a|\Big) =0, 
\end{multline*}
then $v_a=0$.
\item Let $v_a\in \CC^6$ and $v^{(n)}\coloneqq v_{v_a}^{(\omega_n)}$; $v^{(n)}(b)\coloneqq \big(\begin{bmatrix} \nu_{a,k}^{(b)} \\ F_k^{(n)}(b) \end{bmatrix}\big)_{k\in \{\one,\two,\three\}}$. If
\begin{align*}
 &  \nu_{a,\one}=F_{\two}(a)=F_{\three}(a) = 0 \quad \text{and}\\
 & \liminf_{n\to\infty} \Big(|\nu_{b,\one}^{(n)}|+|\nu_{b,\two}^{(n)}|+|\nu_{b,\three}^{(n)}|+|F_{\one}^{(n)}(b)+F_{\two}^{(n)}(b)+F_{\three}^{(n)}(b)|\Big) =0, 
\end{align*}
then $v_a=0$.
\end{enumerate}
\end{prop}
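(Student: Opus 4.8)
The plan is to follow the blueprint of the proof of Theorem \ref{thm:mainex}, replacing each exact boundary equality there by a $\liminf$-zero condition here; the essential new device is that the standing hypothesis $\sup_{\omega\in\R}\|\Psi_{i\omega}\|<\infty$ keeps the traces $v^{(n)}(b)=\Psi_{i\omega_n}(b)v_a$ in a bounded set, so that the cancellations which were exact in Theorem \ref{thm:mainex} may be run with $o(1)$-bookkeeping along subsequences.

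For the equivalence (i)$\Leftrightarrow$(ii) I would first rewrite $W_B\left[\begin{smallmatrix}\Psi_{i\omega_n}(b)\\ I_6\end{smallmatrix}\right]v_a = W_B\left[\begin{smallmatrix}v^{(n)}(b)\\ v_a\end{smallmatrix}\right]$ and read off its six entries from the explicit form of $W_B$ in Theorem \ref{thm:genexampl}, exactly as in the proof of Theorem \ref{thm:mainex}: three of them are the $b$-traces $\nu_{b,\one}^{(n)}-\nu_{b,\two}^{(n)}$, $\nu_{b,\one}^{(n)}-\nu_{b,\three}^{(n)}$, $\beta\nu_{b,\one}^{(n)}+\sum_k F_k^{(n)}(b)$, while the remaining three are the $n$-independent $a$-traces $\nu_{a,\one}$, $F_\two(a)$, $F_\three(a)$. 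Moreover the Hermitian form in (i) equals $v^{(n)}(b)^*P_1v^{(n)}(b)-v_a^*P_1v_a$. Since all norms on $\CC^6$ are equivalent, the $\liminf$-zero condition in (i) is equivalent to the $\liminf$-zero condition for the sum of the absolute values of these six entries together with the Hermitian term; and because the three $a$-traces do not depend on $n$, a $\liminf$ of a nonnegative sum containing them can vanish only if they vanish identically. This splits (i) precisely into the constraint $\nu_{a,\one}=F_\two(a)=F_\three(a)=0$ together with the $\liminf$-condition of (ii), yielding the claimed equivalence.

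For (ii)$\Leftrightarrow$(iii) I would show that the sets carved out in (ii) and (iii) actually coincide, which immediately gives the equivalence of the two ``$=\{0\}$''-statements. Fix $v_a$ with $\nu_{a,\one}=F_\two(a)=F_\three(a)=0$; these $a$-conditions force $v_a^*P_1v_a=0$ by the computation of the Hermitian form in the proof of Theorem \ref{thm:mainex}, and that same computation gives $v^{(n)}(b)^*P_1v^{(n)}(b)=2\Re\sum_k \overline{\nu_{b,k}^{(n)}}\,F_k^{(n)}(b)$. The inclusion of the set in (iii) into the set in (ii) is the routine direction: along a subsequence realising the $\liminf$ in (iii), all of $\nu_{b,\one}^{(n)},\nu_{b,\two}^{(n)},\nu_{b,\three}^{(n)}$ and $\sum_k F_k^{(n)}(b)$ tend to $0$, whence the three differences/combinations in (ii) tend to $0$, and, using boundedness of $F_k^{(n)}(b)$, the Hermitian term tends to $0$ as well.

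The main obstacle is the reverse inclusion, the $\liminf$-analogue of the sign-definiteness argument in Theorem \ref{thm:mainex}. Passing to a subsequence realising the $\liminf$ in (ii), set $\nu_b^{(n)}\coloneqq\nu_{b,\one}^{(n)}$; then $\nu_{b,\two}^{(n)}=\nu_b^{(n)}+o(1)$, $\nu_{b,\three}^{(n)}=\nu_b^{(n)}+o(1)$ and $\sum_k F_k^{(n)}(b)=-\beta\nu_b^{(n)}+o(1)$. Substituting these into $v^{(n)}(b)^*P_1v^{(n)}(b)=2\Re\sum_k \overline{\nu_{b,k}^{(n)}}\,F_k^{(n)}(b)$ and using that $\nu_b^{(n)}$ and the $F_k^{(n)}(b)$ stay bounded (again by $\sup_{\omega\in\R}\|\Psi_{i\omega}\|<\infty$), the cross terms are $o(1)$ and one obtains $v^{(n)}(b)^*P_1v^{(n)}(b)=-2\beta|\nu_b^{(n)}|^2+o(1)$. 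Since the Hermitian term $v^{(n)}(b)^*P_1v^{(n)}(b)-v_a^*P_1v_a=v^{(n)}(b)^*P_1v^{(n)}(b)$ tends to $0$ and $\beta>0$, this forces $\nu_b^{(n)}\to0$, hence $\nu_{b,\one}^{(n)},\nu_{b,\two}^{(n)},\nu_{b,\three}^{(n)}\to0$ and $\sum_k F_k^{(n)}(b)\to0$ along the subsequence, so the $\liminf$ in (iii) vanishes and $v_a$ lies in the set of (iii). The delicate point throughout is that the exact cancellations of Theorem \ref{thm:mainex} now survive only up to $o(1)$, which is precisely where the uniform bound on $\Psi_{i\omega}$ and the strict positivity of $\beta$ are indispensable.
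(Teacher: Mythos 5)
Your proposal is correct and follows essentially the same route as the paper: the same reading-off of the six rows of $W_B\left[\begin{smallmatrix} v^{(n)}(b)\\ v_a\end{smallmatrix}\right]$ for (i)$\Leftrightarrow$(ii), the same identity $v^{(n)}(b)^*P_1v^{(n)}(b)-v_a^*P_1v_a=2\sum_k\Re\big[(\nu_{b,k}^{(n)})^*F_k^{(n)}(b)\big]$, and the same regrouping that isolates $-\beta|\nu_{b,\one}^{(n)}|^2$ (your $o(1)$-bookkeeping is just the paper's explicit algebraic identity \eqref{eq:37HZ} in disguise), with boundedness of $\Psi_{i\omega}$ and $\beta>0$ used at exactly the same points. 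No gaps.
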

\begin{proof}
Recall that the solution of \eqref{eq:exampleode} for $\lambda=i\omega_n$ is represented by $v^{(n)}(\cdot)= \Psi_{i\omega_n}(\cdot)v_a$. In this case,  $v^{(n)}(b)= \Psi_{i\omega_n}(b)v_a$ for all $n\in \N$. 

(i)$\Leftrightarrow$(ii).
We reformulate the expression in the $\liminf$ in the set given in (i). Let $v_a$ belong to this set. By passing to a subsequence, which is not relabelled we may assume that the $\liminf$ in (ii) can be replaced by a limit. Since we considered non-negative terms in the $\liminf$-expression only, we obtain that the individual terms are null-sequences. Thus, the terms in the norms read
\begin{align}
\label{eq:proofeqexp2}
   &\lim_{n\to\infty}W_B \begin{bmatrix} v^{(n)}(b) \\ v_a \end{bmatrix} =0\text{ and } \\
    &\lim_{n\to\infty} v^{(n)}(b)^*P_1v^{(n)}(b) - v_a^*P_1v_a=0,
\end{align} 
respectively.

Next, since $v_a=\big(\begin{bmatrix} \nu_{a,k} \\ F_k(a) \end{bmatrix}\big)_{k\in \{\one,\two,\three\}}$ and $v^{(n)}(b)\coloneqq \big(\begin{bmatrix} \nu_{b,k}^{(n)} \\ F_k^{(n)}(b) \end{bmatrix}\big)_{k\in \{\one,\two,\three\}}$, the conditions encoded in $\lim_{n\to\infty}\|W_B \left[\begin{smallmatrix} v^{(n)}(b) \\ v_a \end{smallmatrix} \right]\|= 0$ read
\begin{align*}
 &\nu_{a,I} = 0,\quad F_{\two}(a)=F_{\three}(a) = 0 \\
 \lim_{n\to\infty} &\ \nu_{b,\one}^{(n)} -\nu_{b,\two}^{(n)}=\lim_{n\to\infty}\nu_{b,\one}^{(n)} -\nu_{b,\three}^{(n)} =0 \\
  \lim_{n\to\infty}&\  \beta \nu_{b,\one}^{(n)}+ F_{\one}^{(n)}(b)+F_{\two}^{(n)}(b)+F_{\three}^{(n)}(b) =0.
 \end{align*}
 
 Similarly, we may reformulate the conditions in (ii). Indeed, by passing to subsequence, where one write limits instead of $\liminf$, we obtain the limit expression in (i) for this subsequence. 
 
 (iii)$\Leftrightarrow$(ii) We begin by deriving some equalities. For $n\in \N$
  \begin{align*}
v^{(n)}(b)^*P_1&v^{(n)}(b) - v_a^*P_1v_a \\ =&\ 2\sum_{k\in \{\one,\two,\three\}} \big( \begin{bmatrix} \nu_{b,k}^{(n)} \\ F_k^{(n)}(b) \end{bmatrix}^*\begin{bmatrix} 0 & 1\\ 1 & 0\end{bmatrix}\begin{bmatrix} \nu_{b,k}^{(n)} \\ F_k^{(n)}(b) \end{bmatrix}-\begin{bmatrix} \nu_{a,k} \\ F_k(a) \end{bmatrix}^*\begin{bmatrix} 0 & 1\\ 1 & 0\end{bmatrix}\begin{bmatrix} \nu_{a,k} \\ F_k(a) \end{bmatrix}  \big) \\
 =&\  2\sum_{k\in \{\one,\two,\three\}} \Re \left[ (\nu_{b,k}^{(n)})^*F_{k}^{(n)}(b)\right] - \Re[\nu_{a,k}^*F_{k}(a)] \\
 =&\ 2 \sum_{k\in \{\one,\two,\three\}} \Re \left[ (\nu_{b,k}^{(n)})^*F_{k}^{(n)}(b)\right] ,
 \end{align*}
 where we have used the (common) conditions on $v_a$. Hence
 \begin{equation}
 \label{eq:36HZ}
   v^{(n)}(b)^*P_1v^{(n)}(b) - v_a^*P_1v_a = v^{(n)}(b)^*P_1v^{(n)}(b) = 2\sum_{k\in \{\one,\two,\three\}} \Re \left[ (\nu_{b,k}^{(n)})^*F_{k}^{(n)}(b)\right] .
 \end{equation}
Furthermore, since $v^{(n)}(b)= \Psi_{i\omega_n}(b)v_a$ and $\Psi_{i\omega}$ is assumed to be bounded, we have that the sequences $(\nu_{b,k}^{(n)}, F_{k}^{(n)}(b))_{n \in {\mathbb N}}$, $k\in \{\one,\two,\three\}$, are bounded.

Next we prove that  (iii)$\Rightarrow$(ii). By possibly passing to a subsequence, we may assume that the $\liminf$ can be replaced by $\lim$. 

From (\ref{eq:36HZ}) we deduce that
\[
   |v^{(n)}(b)^*P_1v^{(n)}(b) - v_a^*P_1v_a|  \leq 2 \sum_{k\in \{\one,\two,\three\}} |(\nu_{b,k}^{(n)})||F_{k}^{(n)}(b)|.
\]
Combining this with the property that  $(F_{k}^{(n)}(b))_{n\in \N}$ is a bounded sequence for all $k\in  \{\one,\two,\three\}$, and that $\lim_{n \rightarrow \infty}\nu_{b,k}^{(n)}=0$, we find that  $\lim_{n \rightarrow \infty}    |v^{(n)}(b)^*P_1v^{(n)}(b) - v_a^*P_1v_a| =0$.

 The inequalities
\begin{align*}
|\nu_{b,\one}^{(n)}-\nu_{b,\two}^{(n)}|+|\nu_{b,\one}^{(n)}-\nu_{b,\three}^{(n)}| & \leq 2 |\nu_{b,\one}^{(n)}|+|\nu_{b,\two}^{(n)}|+|\nu_{b,\three}^{(n)}|\text{ and } \\
|\beta\nu_{b,\one}^{(n)} +F_{\one}^{(n)}(b)+F_{\two}^{(n)}(b)+F_{\three}^{(n)}(b)| & \leq |\beta\nu_{b,\one}^{(n)}| +|F_{\one}^{(n)}(b)+F_{\two}^{(n)}(b)+F_{\three}^{(n)}(b)|
\end{align*}
 yield the remaining convergences.  Thus, (iii) is sufficient for (ii).

 Next we show that (ii)$\Rightarrow$(iii). Again, we may assume, by possibly passing to a subsequence, not restricting the generality, that $\liminf$ can be replaced by $\lim$. Next, starting with the term on the right-hand side of (\ref{eq:36HZ}), we compute for $n\in \N$
 \begin{align}
 \nonumber
  \sum_{k\in \{\one,\two,\three\}} (\nu_{b,k}^{(n)})^*F_{k}^{(n)}(b) =&\  (\nu_{b,\one}^{(n)})^*\left[ F_{\one}^{(n)}(b) + F_{\two}^{(n)}(b) +F_{\three}^{(n)}(b) \right] +\\
  \nonumber
  &\ [ \nu_{b,\two}^{(n)} - \nu_{b,\one}^{(n)}]^* F_{\two}^{(n)}(b) +  [ \nu_{b,\three}^{(n)} - \nu_{b,\one}^{(n)}]^* F_{\three}^{(n)}(b) \\
 \nonumber
  =&\
  (\nu_{b,\one}^{(n)})^*\left[ \beta \nu_{b,\one}^{(n)} + F_{\one}^{(n)}(b) + F_{\two}^{(n)}(b) +F_{\three}^{(n)}(b) \right] - \beta |\nu_{b,\one}^{(n)}|^2 +\\
  \label{eq:37HZ}
 &\ [ \nu_{b,\two}^{(n)} - \nu_{b,\one}^{(n)}]^* F_{\two}^{(n)}(b) +  [ \nu_{b,\three}^{(n)} - \nu_{b,\one}^{(n)}]^* F_{\three}^{(n)}(b).
\end{align}
Since $(\nu_{b,\one}^{(n)})_{n\in \N}, (F_{\two}^{(n)}(b))_{n\in \N}$ and $(F_{\three}^{(n)}(b))_{n\in \N}$ are bounded sequences, the conditions in item (ii) combined with (\ref{eq:36HZ}) and (\ref{eq:37HZ}) imply that $\lim_{n \rightarrow \infty} \beta |\nu_{b,\one}^{(n)}|^2 =0$. Using that $\beta \neq 0$, this gives that $\lim_{n \rightarrow \infty} |\nu_{b,\one}^{(n)}|^2 =0$. Substituting this property into the equalities in item (ii) gives the equalities in item (iii).
\end{proof}
 
\begin{thm}\label{thm:charexpoex} Let $(A,\dom(A))$ as well as $(T(t))_{t\geq 0}$ be given by Theorem \ref{thm:genexampl}. Furthermore, assume that $\sup_{\omega\in \R}\|\Psi_{i\omega} \|<\infty$, where $\Psi_{i\omega}$ is given by (\ref{eq:exampleode}). Then the following conditions are equivalent:
\begin{enumerate}
\item[(i)] $(T(t))_{t\geq 0}$ is exponentially stable;
\item[(ii)] for all sequences in $(\omega_k)_{k \in {\mathbb N}}$ in $\R$
\[
   \liminf_{k\to\infty} \Big(|\Psi_{i\omega_k,{\one}}^{12}(b)\Psi_{i\omega_k,\two}^{11}(b)|+|\Psi_{i\omega_k,{\one}}^{12}(b)\Psi_{i\omega_k,\three}^{11}(b)|+|\Psi_{i\omega_k,{\two}}^{11}(b)\Psi_{i\omega_k,\three}^{11}(b)|\Big) >0.
\]
\end{enumerate}
\end{thm}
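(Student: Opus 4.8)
The plan is to reduce exponential stability to the sequential condition (iii) of Proposition \ref{prop:mainexexst}, and then to match that condition against (ii) by producing sequential analogues of the fixed-frequency arguments already developed for semi-uniform stability. Concretely, Theorem \ref{thm:exp2} (read with $Q=P_1$ and $n=6$) characterises exponential stability as the requirement that for \emph{every} sequence $(\omega_k)_{k\in\N}$ the set in Proposition \ref{prop:mainexexst}(i) be trivial; by Proposition \ref{prop:mainexexst} this is equivalent to condition (iii) there holding for every sequence. Hence it suffices to show that ``(iii) of Proposition \ref{prop:mainexexst} holds for all sequences'' is equivalent to (ii). Throughout I would exploit one structural fact: each block $\Psi_{i\omega,k}$ solves \eqref{eq:psikode}, whose coefficient matrix is trace-free, so by Liouville's formula $\det\Psi_{i\omega,k}(b)=1$ for all $\omega$ and $k$; combined with the standing bound $\sup_{\omega}\|\Psi_{i\omega}\|<\infty$ this yields $\sup_{\omega}\|\Psi_{i\omega,k}(b)^{-1}\|<\infty$.

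For (ii)$\Rightarrow$(i) I would fix a sequence $(\omega_k)$ and a $v_a$ satisfying the boundary data in Proposition \ref{prop:mainexexst}(iii), i.e.\ $\nu_{a,\one}=F_{\two}(a)=F_{\three}(a)=0$ together with the vanishing $\liminf$. Passing to a subsequence along which the $\liminf$ is attained as a limit (so each nonnegative summand tends to $0$) and along which $\Psi_{i\omega_k,\one}^{12}(b)\to a_1$, $\Psi_{i\omega_k,\two}^{11}(b)\to a_2$, $\Psi_{i\omega_k,\three}^{11}(b)\to a_3$, assumption (ii) forces $|a_1a_2|+|a_1a_3|+|a_2a_3|>0$, so at least two of $a_1,a_2,a_3$ are nonzero. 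The identities $\nu_{b,\one}^{(k)}=\Psi_{i\omega_k,\one}^{12}(b)F_{\one}(a)$, $\nu_{b,\two}^{(k)}=\Psi_{i\omega_k,\two}^{11}(b)\nu_{a,\two}$, $\nu_{b,\three}^{(k)}=\Psi_{i\omega_k,\three}^{11}(b)\nu_{a,\three}$ then give $a_1F_{\one}(a)=a_2\nu_{a,\two}=a_3\nu_{a,\three}=0$, and a case distinction exactly as in Proposition \ref{prop:omeganonzero} (now read for the limits $a_i$) makes two of the three strings carry zero Cauchy data and hence vanish identically. For the remaining string the full boundary vector tends to $0$, and the uniform bound on $\Psi_{i\omega_k,\cdot}(b)^{-1}$ forces its (constant) initial datum to vanish too; thus $v_a=0$.

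For (i)$\Rightarrow$(ii) I would argue contrapositively. If (ii) fails there is a sequence with $\liminf$ of the products equal to $0$; after passing to a subsequence the products tend to $0$ and the three entries converge to $a_1,a_2,a_3$ with at most one nonzero. In each of the cases $a_1=a_2=0$, $a_1=a_3=0$, $a_2=a_3=0$ I would construct a single nonzero $v_a$ (independent of $k$) verifying the boundary data in Proposition \ref{prop:mainexexst}(iii), mirroring the eigenvector construction in the second half of Theorem \ref{thm:stabgen}. The only change is that the free parameter (e.g.\ $F_{\one}(a)$, resp.\ $\nu_{a,\three}$) can no longer be chosen by an exact $\omega$-dependent formula; instead I would further refine the subsequence so that the entries $\Psi_{i\omega_k,\one}^{22}(b),\Psi_{i\omega_k,\two}^{21}(b),\Psi_{i\omega_k,\three}^{21}(b)$ converge, and select the parameter from the \emph{limits}, using $\det\Psi_{i\omega_k,\cdot}(b)=1$ to guarantee the limiting denominator is nonzero. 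This produces $v_a\neq0$ whose associated boundary data satisfy the $\liminf$ condition along that subsequence, contradicting Proposition \ref{prop:mainexexst}(iii) and hence exponential stability.

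The main obstacle is precisely this passage from the fixed-frequency setting to the sequential one: the vector $v_a$ must be a single constant while the boundary values $v^{(k)}(b)=\Psi_{i\omega_k}(b)v_a$ vary with $k$, so the clean algebraic selections of Proposition \ref{prop:omeganonzero} and Theorem \ref{thm:stabgen} must be replaced by limits of $\Psi$-entries along suitably extracted subsequences. Making these selections legitimate rests on two compactness/normalisation inputs—boundedness of $(\Psi_{i\omega})_\omega$ (for subsequence extraction) and the determinant-one identity (to keep inverses bounded and limiting denominators away from $0$)—and the bookkeeping of which two strings are annihilated in each case is where care is needed.
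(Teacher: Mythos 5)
Your proposal is correct and follows essentially the same route as the paper: reduce via Theorem \ref{thm:exp2} and Proposition \ref{prop:mainexexst}(iii), then run sequential versions of the case analyses from Proposition \ref{prop:omeganonzero} (for (ii)$\Rightarrow$(i)) and of the eigenvector-type construction from Theorem \ref{thm:stabgen} (for the contrapositive of (i)$\Rightarrow$(ii)), extracting subsequences along which the relevant entries of $\Psi_{i\omega_k}(b)$ converge. The only cosmetic differences are that you obtain the uniform bound on $\Psi_{i\omega,k}(b)^{-1}$ (and the nonvanishing of the limiting denominators) from Liouville's formula and $\det\Psi_{i\omega,k}(b)=1$ rather than from \cite[Lemma 3.2]{TW22_expstab}, and that you treat bounded and unbounded sequences uniformly where the paper dispatches the bounded case separately via continuity and Theorem \ref{thm:stabgen}; both variants are sound.
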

\begin{proof}
 The set up is now similar to the proof of Theorem \ref{thm:stabgen}. We use Theorem \ref{thm:exp2} for the characterisation of exponential stability and Proposition \ref{prop:mainexexst} for the reformulation of (ii) in Theorem \ref{thm:exp2}.
 
(i)$\Rightarrow$(ii) We prove that not (ii) implies not (i). For this, we distinguish two cases. If $(\omega_k)_{k\in \N}$ is a bounded sequence in $\R$ with 
\[
   \liminf_{k\to\infty} \Big(|\Psi_{i\omega_k,{\one}}^{12}(b)\Psi_{i\omega_k,\two}^{11}(b)|+|\Psi_{i\omega_k,{\one}}^{12}(b)\Psi_{i\omega_k,\three}^{11}(b)|+|\Psi_{i\omega_k,{\two}}^{11}(b)\Psi_{i\omega_k,\three}^{11}(b)|\Big) =0.
\]
Then, by the continuity of $\omega\mapsto \Psi_{i\omega}$,
\[
   \Big(|\Psi_{i\omega,{\one}}^{12}(b)\Psi_{i\omega,\two}^{11}(b)|+|\Psi_{i\omega,{\one}}^{12}(b)\Psi_{i\omega,\three}^{11}(b)|+|\Psi_{i\omega_k,{\two}}^{11}(b)\Psi_{i\omega,\three}^{11}(b)|\Big) =0
\]for one accumulation value $\omega$ of $(\omega_k)_{k\in \N}$. Hence, by  Theorem \ref{thm:stabgen}, the semi-group is not semi-uniformly stable, and, thus, not exponentially stable. 

It remains to consider the case $(\omega_k)_{k\in \N}$ is an unbounded sequence in $\R$ with no bounded subsequence such that
\[
   \liminf_{k\to\infty} \Big(|\Psi_{i\omega_k,{\one}}^{12}(b)\Psi_{i\omega_k,\two}^{11}(b)|+|\Psi_{i\omega_k,{\one}}^{12}(b)\Psi_{i\omega_k,\three}^{11}(b)|+|\Psi_{i\omega_k,{\two}}^{11}(b)\Psi_{i\omega_k,\three}^{11}(b)|\Big) =0.
\]
Without restriction that we may assume $\omega_k\to \infty$. Moreover, using the boundedness of $\omega\mapsto \Psi_{i\omega}$, we may choose a subsequence (not relabelled) such that $(\Psi_{i\omega_k}(b))_{k\in \N}$ converges as $k\to\infty$; we let
\[
    P_{l}^{ms}\coloneqq \lim_{k\to\infty} \Psi^{ms}_{i\omega_k,l}(b)\quad(l\in \{\one,\two,\three\}, m,s\in \{1,2\})
\]
The assumption implies that two of the three numbers $P_{{\one}}^{12}, P_{\two}^{11}, P_{\three}^{11}$ are zero.

Consider the case $P_{{\one}}^{12}=P_{\two}^{11}=0$. By \cite[Lemma 3.2]{TW22_expstab}, $\omega\mapsto(x\mapsto  \Psi_{i\omega}(x)^{-1})$ is uniformly bounded. Thus, $P_{\one}^{22}\neq 0$, for otherwise $\|\Psi_{i\omega}(b)^{-1}\|\to\infty$. Next, we define $\nu_{a,\one}=\nu_{a,\three}=0$ as well as $F_{\two}(a)=F_{\three}(a)=0$. Moreover, $\nu_{a,\two}=1$ and $F_{\one}(a)= (P_{\one}^{22})^{-1}(-P_{\two}^{21})$. Then, $v_a\neq 0$ and, hence, $v^{(n)}\coloneqq  \Psi^{ms}_{i\omega_n}(\cdot)v_a\neq 0$. Then, $\nu_{b,\three}^{(n)}=F_{\three}^{(n)}(b)=0$ since $v^{(n)}_{\three}=0$. 

Next,
\begin{align*}
   \nu_{b,\one}^{(n)}&= \Psi_{i\omega_n,\one}^{11}(b)\nu_{a,\one} + \Psi_{i\omega_n,\one}^{12}(b)F_\one(a)\to 0\quad (n\to\infty) \\
   \nu_{b,\two}^{(n)}& = \Psi_{i\omega_n,\two}^{11}(b)\nu_{a,\two} + \Psi_{i\omega_n,\two}^{12}(b)F_\two(a)\to 0 \quad(n\to\infty).
\end{align*}
Next, we show $F_{\one}^{(n)}(b)+F_{\two}^{(n)}(b)+F_{\three}^{(n)}(b)\to 0$ as $n\to\infty$. For this, it suffices to consider
\begin{align*}
   F_{\one}^{(n)}(b)+F_{\two}^{(n)}(b) & = \Psi_{i\omega_n,\one}^{21}(b)\nu_{a,\one} + \Psi_{i\omega_n,\one}^{22}(b)F_\one(a)+\Psi_{i\omega_n,\two}^{21}(b)\nu_{a,\two} + \Psi_{i\omega_n,\two}^{22}(b)F_\two(a) \\ &= 0 + \Psi_{i\omega_n,\one}^{22}(b)(P_{\one}^{22})^{-1}(-P_{\two}^{21})) +\Psi_{i\omega_n,\two}^{21}(b) + 0\to 0\quad(n\to\infty)
\end{align*}
which establishes all conditions in (iii) of Proposition \ref{prop:mainexexst} albeit $v_a\neq 0$. Hence, Theorem \ref{thm:exp2} shows that $(T(t))_{t\geq 0}$ is not exponentially stable. 

The remaining two cases are either a variant of the one just developed or can be dealt with analogously to the second case considered in Theorem \ref{thm:stabgen} subject to the modifications along the lines of the case just carried out here.

(ii)$\Rightarrow$(i) The statement can be shown along the same lines as Proposition \ref{prop:omeganonzero} in order to establishing condition (iii) in Proposition \ref{prop:mainexexst}. Thus, $(T(t))_{t\geq0}$ is exponentially stable by Theorem \ref{thm:exp2} 
\end{proof}
\begin{rem}
Note that in the case of constant $\mathcal{H}$ it suffices to consider sequences tending to $\infty$ (or $-\infty$) in (ii) in Theorem \ref{thm:charexpoex}, by Proposition \ref{prop:instabpart}.
\end{rem}

In the remaining part of this section, we restrict ourselves to constant $\mathcal{H}$; that is, we use \eqref{eq:Hconst} and recall $c_{\one}, c_{\two}, c_{\three}$ from \eqref{eq:ck}. We emphasise that in this case, $\omega\mapsto \Psi_{i\omega}$ is bounded, as $\mathcal{H}$ is of bounded variation, see \cite{TW22_expstab}. The next example contains a set up yielding exponential stability.
\begin{example} 
\label{E:5.4}
Let $c_{\one}=c_{\two}=1$ and $c_{\three}=2$, $a=0$, $b=1$. Then the corresponding set up of vibrating strings is exponentially stable. Appealing to \eqref{eq:psi1}, \eqref{eq:psi2}, and \eqref{eq:psi3} together with Theorem \ref{thm:charexpoex}, we may only consider for $\omega\in \R$ the expression
\[
\eta(\omega)\coloneqq |\sin(\omega)\cos(\omega)|^2 +|\sin(\omega)\cos(2\omega)|^2+|\cos(\omega)\cos(2\omega)|^2 
\]
and analyse whether this is uniformly bounded away from zero as $\omega$ ranges over $\R$. 

Using elementary formulas for $\sin$ and $\cos$ and putting $s\coloneqq \sin(\omega)$ and $c\coloneqq \cos(\omega)$, we deduce for all $\omega$
\[
  \eta(\omega) = s^2c^2 + (c^2-s^2)^2 = s^2(1-s^2) + (1-2s^2)^2 =: f(s).
\]
which as a sum of non-negative terms can only be zero, if either terms vanish. It is not hard to show that the minima of the polynomial $f$ are for $s^2=1/2$. So $f(s) \geq 1/4$ for all $s^2\in [0,1]$
and thus 
\[
  \inf_{\omega\in \R} \eta(\omega)\geq \frac{1}{4}.
\]
Using Theorem \ref{thm:charexpoex} we conclude that this system of vibrating strings yields an exponentially stable semigroup. \hfill $\Box$
\end{example}

We conclude this section by providing a setting, where the semigroup corresponding to the port-Hamiltonian system of Figure \ref{fig:1} is semi-uniformly stable but not exponentially stable. The core is Kronecker's theorem from number theory (see, e.g., \cite[Theorem 2.21]{Haaseetal}). For this we define $\T\coloneqq \{z\in \CC\mid{ | z|=1}\}$. For $m\in \N$ and $a=(a_1,\ldots,a_m)\in \T^m$, we define
\[
   \phi_a \colon \T^m \to \T^m, (z_1,\ldots,z_m)
   \mapsto (a_1z_1,\ldots,a_m z_m).
\]
\begin{thm}[Kronecker]
\label{thm:kronecker} 
Let $ (a_1,\ldots,a_m )\in \T^m$. Then the following conditions are equivalent
\begin{enumerate}[(i)]
  \item for all $z\in \T^m$ the set $\{\phi_a^k(z)\mid{k\in \N}\}$ is dense in $\T^m$.
  \item for all $k_1,\ldots, k_m\in \Z$ with $a_1^{k_1}a_2^{k_2}\cdots a_m^{k_m} =1$, we have $k_1=k_2=\ldots=k_m=0$.
\end{enumerate}
\end{thm}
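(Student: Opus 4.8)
The plan is to view $\phi_a$ as translation by the element $a$ in the compact abelian group $\T^m$, so that $\phi_a^k(z)=(a_1^k z_1,\ldots,a_m^k z_m)$, and to exploit that the continuous characters of $\T^m$ are precisely the monomials $\chi_k\colon z\mapsto z_1^{k_1}\cdots z_m^{k_m}$ indexed by $k=(k_1,\ldots,k_m)\in\Z^m$. In this language condition (ii) says exactly that the only character with $\chi_k(a)=1$ is the trivial one $\chi_0\equiv 1$.

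For the direction showing that the failure of (ii) forces the failure of (i), I would argue directly. If $\chi_k(a)=1$ for some $k\neq 0$, then along every orbit
\[
\chi_k(\phi_a^j(z))=\chi_k(z)\,\chi_k(a)^j=\chi_k(z)
\]
is constant in $j$. Hence the whole orbit lies in the level set $\{w\in\T^m\mid\chi_k(w)=\chi_k(z)\}$, which is a coset of the proper closed subgroup $\ker\chi_k$ and thus a nowhere dense closed set. Therefore no orbit can be dense, contradicting (i). This is the elementary half.

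For the implication (ii)$\Rightarrow$(i) I would invoke Weyl's equidistribution criterion. Fix $z\in\T^m$ and a nontrivial character $\chi_k$ (so $k\neq 0$). Then
\[
\frac1N\sum_{j=0}^{N-1}\chi_k(\phi_a^j(z))=\chi_k(z)\,\frac1N\sum_{j=0}^{N-1}\chi_k(a)^j,
\]
and since (ii) forces $\chi_k(a)\neq 1$, the inner geometric sum is bounded in modulus by $2/|1-\chi_k(a)|$, so the full average tends to $0$ as $N\to\infty$. As the averages of the trivial character are identically $1$, Weyl's criterion is verified, and the forward orbit $(\phi_a^j(z))_j$ is equidistributed in $\T^m$ with respect to normalised Haar measure. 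Since Haar measure has full support, equidistribution implies density, and as the argument is uniform in the starting point $z$, condition (i) follows.

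The main obstacle is exactly the content packaged in Weyl's criterion: upgrading the convergence of the character averages to genuine equidistribution (and hence density) of the orbit. Concretely, one approximates an arbitrary continuous test function on $\T^m$ uniformly by trigonometric polynomials --- finite linear combinations of the $\chi_k$ --- using the Stone--Weierstrass theorem, and then transfers the just-established convergence from characters to the averages of the test function. Everything else is routine bookkeeping; in particular, the equivalence between density of a single orbit, density of all orbits, and the closed subgroup $\overline{\{a^k\mid k\in\N\}}$ exhausting $\T^m$ follows from translations being homeomorphisms of $\T^m$.
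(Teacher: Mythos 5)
Your argument is correct, but it takes a different route from the paper, which does not prove the theorem at all: it simply cites \cite[Example 2.18 and Theorem 2.21]{Haaseetal}, i.e.\ the general machinery of rotation systems on compact groups (minimality of the rotation by $a$ is equivalent to density of the closed subgroup generated by $a$, and for $\T^m$ the latter is tested against the characters $\chi_k(z)=z_1^{k_1}\cdots z_m^{k_m}$ via Pontryagin duality). Your proof is self-contained and goes through Weyl's equidistribution criterion instead: the easy direction is identical in spirit (a nontrivial character fixed by the rotation traps every orbit in a nowhere dense coset of $\ker\chi_k$), but for (ii)$\Rightarrow$(i) you bound the geometric character sums by $2/\lvert 1-\chi_k(a)\rvert$, upgrade to equidistribution with respect to Haar measure via Stone--Weierstrass, and conclude density from full support. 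This buys a strictly stronger conclusion (equidistribution, not just density of each orbit) at the price of invoking Weyl/Stone--Weierstrass rather than the structure theory of compact group rotations; both the analytic estimate and the topological step (a proper closed subgroup of the connected group $\T^m$ has empty interior) are sound. Your closing remark about uniformity in $z$ is unnecessary --- condition (i) only requires density of each individual orbit, which your argument already delivers pointwise --- but this is cosmetic and does not affect correctness.
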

\begin{proof}
The proof is a (straightforward) combination of \cite[Example 2.18 and Theorem 2.21]{Haaseetal}.
\end{proof}

We turn back to our example of vibrating strings and assume that $\mathcal{H}$ is constant, that is, given by \eqref{eq:Hconst} and recall $c_k=\sqrt{\rho_k/T_k}$, $k\in \{\one,\two,\three\}$. In this case, we may recall (\ref{eq:psi1})--(\ref{eq:psi3}).

We define for $x,y\in\R$, the equivalence relation $x\sim y$ via $x-y\in \Z$ and $\tilde{\T}\coloneqq \R/_\sim$. We say that $x,y\in \R$ are \textit{linearly dependent in the $\Z$-module $\tilde{\T}$}, if there exists integers $k,n\in \Z$ such that $kx\sim ny$. In case they are not linearly dependent, they are called \textit{linearly independent in the $\Z$-module $\tilde{\T}$}.
\begin{thm} 
Let $(A,\dom(A))$ as well as $(T(t))_{t\geq 0}$ be given by Theorem \ref{thm:genexampl} with $\mathcal{H}$ is constant.

Assume that all of the pairs $(c_{\one},c_{\two})$, $(c_{\one},c_{\three})$, and $(c_{\two},c_{\three})$ are linearly independent in the $\Z$-module $\tilde{\T}$.
Then $(T(t))_{t\geq 0}$ is semi-uniformly stable, but not exponentially stable.
\end{thm}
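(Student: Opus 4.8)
The plan is to prove the two assertions separately, using Corollary~\ref{cor:charsus} for semi-uniform stability and Theorem~\ref{thm:charexpoex} for the failure of exponential stability. Throughout I abbreviate $L\coloneqq b-a$ and keep \eqref{eq:psi1}--\eqref{eq:psi3} in mind, namely $\Psi_{i\omega,\one}^{12}(b)=(ic_\one/\rho_\one)\sin(c_\one\omega L)$, $\Psi_{i\omega,\two}^{11}(b)=\cos(c_\two\omega L)$ and $\Psi_{i\omega,\three}^{11}(b)=\cos(c_\three\omega L)$. For semi-uniform stability I would just verify the three ratio conditions of Corollary~\ref{cor:charsus}(iii), each being a restatement of one independence hypothesis. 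If $\tfrac{c_\one}{c_\two}\in\mathbb{E}/\mathbb{O}$, then $(1+2\ell)c_\one-2kc_\two=0\in\Z$ for some $k,\ell\in\Z$, i.e.\ the relation $(1+2\ell)c_\one\sim 2kc_\two$ whose $c_\one$-coefficient $1+2\ell$ is odd, hence non-zero; this contradicts linear independence of $(c_\one,c_\two)$. The same reasoning gives $\tfrac{c_\one}{c_\three}\notin\mathbb{E}/\mathbb{O}$, while $\tfrac{c_\two}{c_\three}\in\mathbb{O}/\mathbb{O}$ would produce $(1+2\ell)c_\two-(1+2k)c_\three=0\in\Z$ with \emph{both} coefficients odd, contradicting independence of $(c_\two,c_\three)$. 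Hence Corollary~\ref{cor:charsus} yields semi-uniform (equivalently asymptotic) stability.

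For the failure of exponential stability the plan is to exhibit, via Theorem~\ref{thm:charexpoex}, a single sequence $(\omega_k)_k$ in $\R$ with
\[
\liminf_{k\to\infty}\big(|\Psi_{i\omega_k,\one}^{12}(b)\Psi_{i\omega_k,\two}^{11}(b)|+|\Psi_{i\omega_k,\one}^{12}(b)\Psi_{i\omega_k,\three}^{11}(b)|+|\Psi_{i\omega_k,\two}^{11}(b)\Psi_{i\omega_k,\three}^{11}(b)|\big)=0.
\]
Since $\Psi_{i\omega,\one}^{12}(b)$ is bounded in $\omega$ (by $c_\one/\rho_\one$), all three summands vanish as soon as $\cos(c_\two\omega_k L)\to 0$ and $\cos(c_\three\omega_k L)\to 0$ simultaneously. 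I would take $\omega_k=k\pi/L$, so that $\cos(c_\two\omega_k L)=\cos(c_\two k\pi)$ and $e^{2ic_\two\omega_k L}=(e^{2ic_\two\pi})^k$, and use that $\cos x=0\iff e^{2ix}=-1$ together with $2\cos^2 x=1+\cos 2x$: thus $e^{2ic_\two\omega_k L}\to-1$ forces $\cos(c_\two\omega_k L)\to 0$, and likewise for $c_\three$. So the whole problem reduces to bringing a suitable orbit close to $(-1,-1)$.

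The heart of the argument is then Kronecker's Theorem~\ref{thm:kronecker}. Setting $a\coloneqq(e^{2ic_\two\pi},e^{2ic_\three\pi})\in\T^2$, the orbit $\{\phi_a^k(1,1)\}_{k\in\N}=\{(e^{2ic_\two\pi k},e^{2ic_\three\pi k})\}$ is exactly the family I need near $(-1,-1)$. The Kronecker condition $a_1^{k_1}a_2^{k_2}=1\Rightarrow k_1=k_2=0$ unwinds to $c_\two k_1+c_\three k_2\in\Z\Rightarrow k_1=k_2=0$, which (with $n=-k_2$) is precisely linear independence of $(c_\two,c_\three)$ in the $\Z$-module $\tilde\T$. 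Hence Theorem~\ref{thm:kronecker} gives density of the orbit in $\T^2$; the same independence shows $a_1,a_2$ are not roots of unity (else $c_\two N\in\Z$ or $c_\three N\in\Z$ for some $N\neq 0$, a dependence), so $k\mapsto\phi_a^k(1,1)$ is injective and the orbit is infinite, letting me select indices $k_j\to\infty$ with $\phi_a^{k_j}(1,1)\to(-1,-1)$. Along $(\omega_{k_j})_j$ both cosines tend to $0$, the displayed $\liminf$ vanishes, and Theorem~\ref{thm:charexpoex} denies exponential stability.

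I expect the main obstacle to be the bookkeeping of this Kronecker step: matching the multiplicative relation $a_1^{k_1}a_2^{k_2}=1$ with the module relation $k_1c_\two+k_2c_\three\in\Z$ under the choice $\omega_k=k\pi/L$, and confirming that $(-1,-1)$ is \emph{approached but never attained}. The latter must hold, for an equality $\cos(c_\two k\pi)=\cos(c_\three k\pi)=0$ with $k\neq 0$ would give $(c_\two-c_\three)k\in\Z$, a genuine dependence of $(c_\two,c_\three)$ contradicting the hypothesis — consistently with the absence of imaginary eigenvalues forced by the asymptotic stability already established. Everything else, namely the boundedness of $\Psi_{i\omega,\one}^{12}(b)$ and the three elementary ratio computations, is routine.
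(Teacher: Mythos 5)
Your proposal is correct and follows essentially the same route as the paper: semi-uniform stability via the ratio conditions of Corollary \ref{cor:charsus}, and failure of exponential stability by using Kronecker's theorem to produce a sequence $(\omega_k)$ along which the sum in Theorem \ref{thm:charexpoex}(ii) has vanishing $\liminf$. The only (immaterial) difference is the choice of data in the Kronecker step: the paper works with the pair $(c_{\one},c_{\two})$, the target point $(1,\i)\in\T^2$ and $\omega_n=2\pi n/(b-a)$ so that $\sin(c_{\one}\omega_n(b-a))$ and $\cos(c_{\two}\omega_n(b-a))$ both tend to zero, whereas you use $(c_{\two},c_{\three})$ with doubled angles and target $(-1,-1)$; both choices are covered by the hypotheses and yield the same conclusion.
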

\begin{proof}
As all the pairs are independent in the $\Z$-module, the condition (iii) in Corollary \ref{cor:charsus} is satisfied yielding that $(T(t))_{t\geq 0}$ is semi-uniformly stable. 

By Theorem \ref{thm:kronecker} the set 
\[
   \{ (e^{i c_{\one}2\pi n},e^{i c_{\two}2\pi n}); n\in \N\}  =     \{ \phi^n_{(e^{i c_{\one}2\pi },e^{i c_{\two}2\pi)})}(1,1); n\in \N\}\subseteq \T^2
\]
is dense ${\mathbb T}^2$. In particular, $(1,i)$ can be approximated arbitrarily well, which yields that
\[
\{ (\sin(c_{\one}2\pi n),\cos(c_{\two}2\pi n)); n\in \N\}
\]
accumulates at $0$. Hence, $\omega_n\coloneqq 2\pi n/(b-a)$ defines a sequence such that 
\[
   \liminf_{n\to\infty} \Big(|\Psi_{i\omega_n,{\one}}^{12}(b)\Psi_{i\omega_n,\two}^{11}(b)|+|\Psi_{i\omega_n,{\one}}^{12}(b)\Psi_{i\omega_n,\three}^{11}(b)|+|\Psi_{i\omega_n,{\two}}^{11}(b)\Psi_{i\omega_n,\three}^{11}(b)|\Big) =0,
\]
which implies that the semigroup is not exponentially stable by Theorem \ref{thm:charexpoex}.
\end{proof}

\section{Conclusion}

For port-Hamiltonian systems on a one-dimensional spatial domain we have presented equivalent characterisations of asymptotic stability. One of the characterisations gives that the system is asymptotically stable when there are no eigenvalues on the imaginary axis. However, in general these eigenvalues are not easy to calculate, and so there is still a need for (easy) sufficient conditions, like there is for exponential stability, see \cite{Villegas_2009}. Furthermore, our example shows that stability is very sensitive with respect to its (constant) parameters, and thus asymptotic stability lacks robustness. We supported this observation also by looking at exponential stability. A logical follow-up question would be if this is also the case for spatial varying parameters. 

There is a strong link between observability/controllability of systems and the stabilisation of it, see e.g.\ \cite{CuWe19} and the references therein. For wave equations on a network, like we have in Section \ref{Sec:3}, observability and controllability has been well studied in the book by D\'{a}ger and Zuazua, \cite{DaZu06}. Hence  future research should study the relation between their and our results.

\appendix
\section{Well-posedness of Ordinary Differential Equations with measurable coefficients}\label{app:wp}

We provide the well-posedness theorem underlying the unique existence of the fundamental solutions considered  in the main body of the manuscript. The strategy to show well-posedness uses exponentially weighted $L_2$-spaces. In an o.d.e.-context, this has been employed for instance in \cite[Chapter 4]{STW_EE21} and the references therein. For convenience, we write out the argument in this special situation.

\begin{thm}\label{thm:wpode} Let $\mathcal{K}\in L_\infty(a,b)^{d \times d}$. Then for all $u_0\in \CC^d$ there exists a unique $u\in H^1(a,b)^d$ such that
\[
   u' = \mathcal{K}u,\quad u(a+)=u_0.
\]
\end{thm}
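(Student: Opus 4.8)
The plan is to recast the initial value problem as a fixed point equation and to solve it by the contraction mapping principle in an exponentially weighted $L_2$-space. Identifying $u_0$ with the constant function taking that value (which lies in $L_2(a,b)^d$ since $(a,b)$ is bounded), the problem $u'=\mathcal{K}u$, $u(a+)=u_0$ is equivalent to the integral equation
\[
   u = u_0 + \partial_t^{-1}(\mathcal{K}u),\qquad (\partial_t^{-1}f)(t)\coloneqq \int_a^t f(s)\d s.
\]
First I would fix $\nu>0$ and equip $L_2(a,b)^d$ with the equivalent norm $\|f\|_\nu^2\coloneqq\int_a^b e^{-2\nu t}|f(t)|^2\d t$, writing $L_{2,\nu}$ for the resulting Hilbert space with inner product $\langle\cdot,\cdot\rangle_\nu$. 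The multiplication operator $M_{\mathcal{K}}\colon f\mapsto \mathcal{K}f$ is bounded on $L_{2,\nu}$ with $\|M_{\mathcal{K}}\|\le \|\mathcal{K}\|_\infty$, as the scalar weight commutes with pointwise multiplication.

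The decisive step is the weighted estimate $\|\partial_t^{-1}\|_{L(L_{2,\nu})}\le 1/\nu$. To prove it, set $g\coloneqq \partial_t^{-1}f$, so that $g\in H^1(a,b)^d$ with $g'=f$ and $g(a)=0$. Integration by parts gives
\[
   \nu\|g\|_\nu^2 + \tfrac12 e^{-2\nu b}|g(b)|^2 = \Re\int_a^b e^{-2\nu t}\,\overline{g(t)}\,g'(t)\d t = \Re\langle g,f\rangle_\nu \le \|g\|_\nu\,\|f\|_\nu,
\]
the boundary contribution at $a$ vanishing because $g(a)=0$. Dropping the nonnegative boundary term at $b$ and dividing by $\|g\|_\nu$ yields the claim. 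Consequently, for $\nu>\|\mathcal{K}\|_\infty$ the map $F(u)\coloneqq u_0+\partial_t^{-1}(M_{\mathcal{K}}u)$ satisfies $\|F(u)-F(v)\|_\nu\le \tfrac{\|\mathcal{K}\|_\infty}{\nu}\|u-v\|_\nu$ with Lipschitz constant strictly below $1$, so Banach's fixed point theorem produces a unique $u\in L_{2,\nu}=L_2(a,b)^d$ with $u=F(u)$.

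Finally I would read off regularity and the stated properties from the fixed point identity. Since $u\in L_2(a,b)^d$ and $\mathcal{K}\in L_\infty(a,b)^{d\times d}$, we have $\mathcal{K}u\in L_2(a,b)^d$, so $\partial_t^{-1}(\mathcal{K}u)\in H^1(a,b)^d$ with weak derivative $\mathcal{K}u$; hence $u=u_0+\partial_t^{-1}(\mathcal{K}u)\in H^1(a,b)^d$ and $u'=\mathcal{K}u$. As $H^1(a,b)^d\hookrightarrow C([a,b])^d$ and the antiderivative vanishes at $a$, the trace $u(a+)=u_0$ holds. Uniqueness within $H^1$ is automatic: any $H^1$-solution is absolutely continuous and therefore satisfies the integral equation, i.e.\ is a fixed point of $F$, of which there is exactly one.

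The only genuinely delicate point is the weighted bound for $\partial_t^{-1}$; the remainder is bookkeeping. The exponential weight is precisely what converts the (non-contractive) Volterra integration operator into a strict contraction once $\nu$ is taken large relative to $\|\mathcal{K}\|_\infty$, and the favourable sign of the boundary term at $b$ is exactly what keeps the estimate $1/\nu$ valid on the bounded interval rather than only on a half-line.
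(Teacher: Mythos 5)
Your proof is correct, and it follows the same overall strategy as the paper -- a Banach fixed point argument for the Volterra integral equation in an exponentially weighted $L_2$-space -- but the two key technical steps are executed differently. For the contraction estimate, the paper bounds $\|\Psi u-\Psi v\|_{L_{2,\rho}}$ directly by applying Cauchy--Schwarz inside the iterated integral, arriving at the Lipschitz constant $\|\mathcal{K}\|_\infty/\sqrt{2\rho}$; you instead isolate the operator norm bound $\|\partial_t^{-1}\|_{L(L_{2,\nu})}\leq 1/\nu$ via integration by parts, exploiting the favourable sign of the boundary term at $b$ (this is the standard lemma from the evolutionary-equations framework of \cite{STW_EE21}, which the paper cites as background but does not actually invoke in its proof). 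Your route gives a cleaner, reusable statement about $\partial_t^{-1}$ and a slightly sharper constant, at the cost of first having to know that $\partial_t^{-1}f\in H^1$ with vanishing trace at $a$; the paper's computation is more self-contained but buries the mechanism in a chain of estimates. The second difference is uniqueness: the paper proves it separately with Gronwall's inequality, whereas you observe that any $H^1$-solution is absolutely continuous, hence satisfies the integral equation, hence coincides with the unique fixed point -- this absorbs uniqueness into the contraction argument and makes the Gronwall step superfluous. Both arguments are complete; yours is arguably the more economical of the two.
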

\begin{proof} Without loss of generality, $(a,b)=(0,1)$. 
 We start off with existence of the solution. For this, let $\rho>0$ and consider the Hilbert space
 \[
    L_{2,\rho}(0,1)^d\coloneqq (L_2(0,1)^d; (v,w)\mapsto \int_0^1 \langle v(x),w(x)\rangle_{{\mathbb C}^d} e^{-2\rho x} \d x).
 \]
 Define
 \[
    \Psi\colon L_{2,\rho}(0,1)^d\to L_{2,\rho}(0,1)^d
 \]
 by
 \[
    (\Psi u)(x) \coloneqq u_0 +\int_0^x \mathcal{K}(s)u(s)\d s.
 \]
 Then, clearly $\Psi$ is well-defined. Moreover, for $u,v\in L_{2,\rho}(0,1)^d$ we obtain
 \begin{align*}
   \|\Psi u-\Psi v\|^2_{L_{2,\rho}} & = \int_0^1 \|\int_0^x \mathcal{K}(s)u(s)-\mathcal{K}(s)v(s)\d s\|^2 \e^{-2\rho x}\d x \\
   &\leq \int_0^1(\int_0^x \|\mathcal{K}(s)\|\| u(s)-v(s)\| \d s)^2\e^{-2\rho x} \d x \\
   &\leq \|\mathcal{K}\|_\infty^2 \int_0^1 (\int_0^x \| u(s)-v(s)\|\e^{-\rho s}\e^{\rho s} \d s)^2 \e^{-2\rho x}\d x \\
   & \leq \|\mathcal{K}\|_\infty^2 \int_0^1 \int_0^x \| u(s)-v(s)\|^2\e^{-2\rho s} \d s \int_0^x \e^{2\rho s}\d s \e^{-2\rho x}\d x \\ 
   & \leq  \|\mathcal{K}\|_\infty^2 \| u-v\|_{L_{2,\rho}}^2 \int_0^1 \int_0^x \e^{2\rho s}\d s \e^{-2\rho x}\d x \\
   & \leq \|\mathcal{K}\|_\infty^2 \| u-v\|_{L_{2,\rho}}^2  \frac{1}{2 \rho}.
 \end{align*}
 Choosing $2\rho> \|\mathcal{K}\|_\infty^2$, we get that $\Psi$ is a strict contraction and assumes a unique fixed point $u$. In consequence,
 \[
   u(x) = \Psi (u)(x) = u_0 +\int_0^x \mathcal{K}(s)u(s)\d s,
 \]which by Fubini's theorem leads to $u\in H^1(0,1)^d$ with 
 \[
    u'(x)=  \mathcal{K}(x)u(x), \quad \text{a.e. } x\in (a,b).
 \] 
 Moreover, Lebesgue's dominated convergence theorem confirms $u(0+)=u_0$; hence, $u$ is a solution of the desired equation.  
 
 To address uniqueness, it suffices to show that the only solution of the differential equation with $u_0=0$ is zero. For this, we integrate the desired o.d.e.~and obtain
 \[
    u(x) = \int_0^x \mathcal{K}(s)u(s)\d s.
 \]
 Hence, 
 \[
    \|u(x)\|\leq \|\mathcal{K}\|_\infty \int_0^x \|u(s)\| \d s
 \]and Gronwall's inequality leads to
 \[
   \|u(t)\|\leq 0; \text{that is, } u=0.\qedhere
 \]
\end{proof}

\begin{cor}\label{cor:exfs} Under the assumptions of Theorem \ref{thm:wpode}, there exists a uniquely determined continuous function $\Gamma \colon [a,b]\to \CC^{d\times d}$ with $\Gamma (\cdot) u_0$ being the unique solution of
\[
     u'(x)=\mathcal{K}(x)u(x),\quad u(a)=u_0.
\]It follows that $\Gamma(0)=I_d$.
\end{cor}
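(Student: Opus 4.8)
The plan is to build $\Gamma$ column by column from the scalar existence-and-uniqueness result already granted by Theorem \ref{thm:wpode}. First I would fix the standard basis $e_1,\dots,e_d$ of $\CC^d$ and, for each $j$, invoke Theorem \ref{thm:wpode} with $u_0=e_j$ to obtain the unique $u_j\in H^1(a,b)^d$ solving $u_j'=\mathcal{K}u_j$ with $u_j(a+)=e_j$. I then define $\Gamma(x)$ to be the $d\times d$ matrix whose $j$-th column is $u_j(x)$, so that $\Gamma(\cdot)e_j=u_j$ by construction.

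Next I would verify continuity together with the claimed value at the left endpoint. The key analytic input is the one-dimensional Sobolev embedding $H^1(a,b)\hookrightarrow C([a,b])$, which guarantees that each $u_j$ has a (unique) continuous representative on the closed interval; in particular the pointwise value $u_j(a)$ is well defined and equals $e_j$. Reading this off columnwise shows that $\Gamma$ is continuous and $\Gamma(a)=I_d$ (that is, $\Gamma(0)=I_d$ in the normalisation $(a,b)=(0,1)$ used in the proof of Theorem \ref{thm:wpode}).

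It then remains to check the defining property and the uniqueness of $\Gamma$, both of which follow from linearity. For arbitrary $u_0=\sum_j \alpha_j e_j$ the function $\Gamma(\cdot)u_0=\sum_j \alpha_j u_j$ lies in $H^1(a,b)^d$, satisfies the differential equation by linearity of the map $v\mapsto \mathcal{K}v$, and attains $u_0$ at $a$; by the uniqueness clause of Theorem \ref{thm:wpode} it is \emph{the} solution. For uniqueness of $\Gamma$ itself, suppose $\tilde{\Gamma}$ is another continuous matrix function with the stated property. Applying both to each $e_j$ produces two solutions of the same initial value problem, which coincide by Theorem \ref{thm:wpode}; hence $\Gamma$ and $\tilde{\Gamma}$ agree on a basis at every $x$, so $\Gamma=\tilde{\Gamma}$.

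I do not anticipate a serious obstacle: the entire content is the packaging of the vector-valued statement of Theorem \ref{thm:wpode} into a matrix-valued one, and the only ingredient beyond that theorem is the continuous embedding of $H^1$ in one spatial dimension, which legitimises evaluating solutions at the endpoint $a$ and secures the continuity of $\Gamma$.
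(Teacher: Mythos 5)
Your proposal is correct and follows essentially the same route as the paper: the paper also observes that the solution operator $u_0\mapsto (Su_0)(x)$ is linear (hence represented by a matrix $\Gamma(x)$, which is precisely your column-by-column construction in the standard basis), invokes the Sobolev embedding $H^1(a,b)\hookrightarrow C[a,b]$ for continuity and endpoint evaluation, and derives uniqueness of $\Gamma$ from the uniqueness clause of Theorem \ref{thm:wpode} applied to each initial value. No gaps to report.
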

\begin{proof}
  For uniqueness, let $\Gamma_1$ share the properties of $\Gamma$. Then for $u_0\in \CC^d$, it follows that $\Gamma_1(\cdot)u_0 =\Gamma(\cdot)u_0$ by the uniqueness result in Theorem \ref{thm:wpode}. Thus, $\Gamma_1=\Gamma$. To address existence, we note that the solution operator $S\colon \CC^d\to H^1(a,b)^d$ assigning to each initital value $u_0$ the corresponding solution constructed in Theorem \ref{thm:wpode} is linear. By the Sobolev embedding theorem, $A_x\colon u_0\mapsto (Su_0)(x)$ is well-defined; as it is linear, we infer that for all $x\in [a,b]$ there is a matrix $\Gamma(x)$, which represents $A_x$. For the continuity of $\Gamma$ it is enough to see that for all $u_0\in \CC^d$, we have $\Gamma(x)u_0 = A_x u_0 = (S u_0)(x)$ and so $\Gamma(\cdot)u_0 \in H^1(a,b)^d\subseteq C[a,b]^d$, by the Sobolev embedding theorem. The last condition follows from $\Gamma(a)u_0=u(a)=u_0$.
\end{proof}

Finally, we present a continuous dependence result in the coefficients, which is a consequence of Gronwall's inequality -- it was already observed in an earlier version of \cite{TW22_expstab}.

\begin{prop}\label{prop:cd} The mapping
\[
    L_\infty(a,b)^{d\times d} \ni \mathcal{K}\mapsto \Gamma_{\mathcal{K}}\in C([a,b];\CC^{d\times d}),
\]
where $\Gamma_{\mathcal{K}}$ is the fundamental solution of 
\[
     u'(x)=\mathcal{K}(x)u(x),\text{ with }\Gamma_{\mathcal{K}}(0)=I_d,
\]is continuous.
\end{prop}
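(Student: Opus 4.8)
The plan is to prove a quantitative, locally Lipschitz estimate from which continuity follows at once; the only tool needed is Gronwall's inequality, applied twice. Fix $\mathcal{K},\mathcal{L}\in L_\infty(a,b)^{d\times d}$ and write $G:=\Gamma_{\mathcal{K}}$ and $H:=\Gamma_{\mathcal{L}}$. By Corollary \ref{cor:exfs} each column of $G$ solves the associated initial value problem, so $G$ satisfies the matrix Volterra identity $G(x)=I_d+\int_a^x \mathcal{K}(s)G(s)\d s$, and likewise $H(x)=I_d+\int_a^x \mathcal{L}(s)H(s)\d s$. Subtracting the two integral identities and inserting the telescoping term $\mathcal{K}H-\mathcal{K}H$ in the integrand, I would split
\[
  \mathcal{K}(s)G(s)-\mathcal{L}(s)H(s)=\mathcal{K}(s)\bigl(G(s)-H(s)\bigr)+\bigl(\mathcal{K}(s)-\mathcal{L}(s)\bigr)H(s),
\]
so that, setting $g(x):=\|G(x)-H(x)\|$, one obtains
\[
  g(x)\leq \|\mathcal{K}\|_\infty\int_a^x g(s)\d s+\|\mathcal{K}-\mathcal{L}\|_\infty\int_a^x\|H(s)\|\d s.
\]

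The key preliminary step is an a priori bound on $\|H(s)\|$ that is uniform in $s$ and controlled by $\|\mathcal{L}\|_\infty$. This is itself a Gronwall estimate: from $\|H(x)\|\leq 1+\|\mathcal{L}\|_\infty\int_a^x\|H(s)\|\d s$ I would conclude $\|H(x)\|\leq \e^{\|\mathcal{L}\|_\infty(b-a)}$ for all $x\in[a,b]$, whence $\int_a^x\|H(s)\|\d s\leq (b-a)\e^{\|\mathcal{L}\|_\infty(b-a)}=:C_{\mathcal{L}}$. Substituting this constant back into the displayed inequality for $g$ turns it into $g(x)\leq \|\mathcal{K}-\mathcal{L}\|_\infty C_{\mathcal{L}}+\|\mathcal{K}\|_\infty\int_a^x g(s)\d s$, which is exactly the hypothesis of the integral Gronwall inequality with constant inhomogeneity.

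A second application of Gronwall then yields
\[
  \sup_{x\in[a,b]}\|\Gamma_{\mathcal{K}}(x)-\Gamma_{\mathcal{L}}(x)\|\leq (b-a)\,\e^{(\|\mathcal{K}\|_\infty+\|\mathcal{L}\|_\infty)(b-a)}\,\|\mathcal{K}-\mathcal{L}\|_\infty,
\]
that is, local Lipschitz continuity of the map in the relevant norms. To deduce continuity at a given $\mathcal{K}_0$, I would restrict attention to the ball $\{\mathcal{L}\colon\|\mathcal{L}-\mathcal{K}_0\|_\infty\leq 1\}$, on which $\|\mathcal{L}\|_\infty\leq \|\mathcal{K}_0\|_\infty+1$ keeps the prefactor bounded; letting $\|\mathcal{K}_0-\mathcal{L}\|_\infty\to 0$ then forces $\|\Gamma_{\mathcal{K}_0}-\Gamma_{\mathcal{L}}\|_{C([a,b])}\to 0$.

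I do not expect a genuine obstacle here, as the argument is a textbook double application of Gronwall. The one point requiring care is that the Lipschitz constant grows exponentially in the coefficient norms, so the estimate is only \emph{locally} Lipschitz rather than global; this is precisely why the concluding continuity argument is phrased on a bounded neighbourhood. A secondary bookkeeping point is to justify the matrix Volterra identity for $G$ and $H$ columnwise from Corollary \ref{cor:exfs} before manipulating the integrands.
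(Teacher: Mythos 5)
Your argument is correct and is essentially the paper's own proof: both rest on the same splitting $\mathcal{K}G-\mathcal{L}H=\mathcal{K}(G-H)+(\mathcal{K}-\mathcal{L})H$ (the paper uses the symmetric variant with $\mathcal{L}$ on the difference term) followed by Gronwall's inequality, the paper working columnwise with $\partial_t\|u(t)-v(t)\|^2$ while you work with the matrix Volterra identity. Your extra a priori bound $\|H\|_\infty\leq \e^{\|\mathcal{L}\|_\infty(b-a)}$ and the resulting explicit local Lipschitz constant are a mild refinement of the paper's estimate, which leaves the factor $\int_a^b\|u(t)\|\d t$ unexpanded.
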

\begin{proof}
  Let $\mathcal{K},\mathcal{L}\in L_\infty(a,b)^{d\times d}$. For $u_0\in \CC^d$ let $u\coloneqq \Gamma_{\mathcal{K}}(\cdot)u_0$ and $v\coloneqq \Gamma_{\mathcal{L}}(\cdot)u_0$. Then, we compute for (a.e.) $t\in (a,b)$
  \begin{align*}
    \partial_t \| u(t)-v(t)\|^2  &= \Re \langle u'(t)-v'(t),u(t)-v(t)\rangle \\
    & = \Re \langle (\mathcal{K}u)(t)-(\mathcal{L}v)(t),u(t)-v(t)\rangle \\
    & \leq\Big( \|\mathcal{K}-\mathcal{L}\|_\infty \|u(t)\| + \|\mathcal{L}\|_\infty \|u(t)-v(t)\|\Big)\|u(t)-v(t)\|.
  \end{align*}
  Integration over $(a,s)$ yields
  \[
     \| u(s)-v(s)\|^2 \leq  \|\mathcal{K}-\mathcal{L}\|_\infty \int_a^s \|u(t)\|\|u(t)-v(t)\| dt + \|\mathcal{L}\|_\infty \int_a^s \|u(t)-v(t)\|^2 dt.
  \]
  Thus, by Gronwall's inequality,
  \[
   \| u(s)-v(s)\|^2 \leq  \|\mathcal{K}-\mathcal{L}\|_\infty \int_a^s \|u(t)\|\|u(t)-v(t)\| dt \cdot e^{\|\mathcal{L}\|_\infty (b-a)};
 \]
 Taking the supremum over $s$ implies
 \[
    \| u-v\|_\infty \leq \|\mathcal{K}-\mathcal{L}\|_\infty \int_a^b \|u(t)\|dt \cdot e^{\|\mathcal{L}\|_\infty (b-a)} .
 \]
So when $\mathcal{L}\to \mathcal{K}$ in $L_\infty(a,b)^{d\times d}$, then $\| u-v\|_\infty = \|(\Gamma_{\mathcal K} - \Gamma_{\mathcal L})u_0\| \to 0$, which shows that the fundamental solution, $\Gamma_{\mathcal{K}}$, is continuous in $C([a,b];\CC^{d\times d})$.
\qedhere
\end{proof}

\end{document}